\begin{document}

\title{
Distributionally Risk-Receptive and Robust Multistage Stochastic Integer Programs and Interdiction Models 
\thanks{This research is partially funded by National Science Foundation Grant CMMI-1824897 and 2034503, and Commonwealth Cyber Initiative grants which are gratefully acknowledged.}}

\titlerunning{Distributionally Risk-Receptive and Robust Multistage Stochastic Programs}

\author{Sumin Kang \and
        Manish Bansal}
\institute{Sumin Kang \and Manish Bansal \at
    Grado Department of Industrial and Systems Engineering, Virginia Tech, Blacksburg, VA 24061, USA \\
    \email{suminkang@vt.edu} \and \email{bansal@vt.edu}
}

\date{First Version: June 2023; Latest Version: Sep 2024}
\def\makeheadbox{} % remove header in the first page

\maketitle

\begin{abstract}
In this paper, we study \underline{d}istributionally \underline{r}isk-\underline{r}eceptive and \underline{d}istribu\-tionally \underline{ro}bust (or risk-averse) multistage stochastic mixed-integer programs (denoted by DRR- and DRO-MSIPs). 
We present cutting plane-based and reformulation-based approaches for solving DRR- and DRO-MSIPs without and with decision-dependent uncertainty to optimality. We show that these approaches are finitely convergent with probability one. 
Furthermore, we introduce generalizations of DRR- and DRO-MSIPs by presenting multistage stochastic disjunctive programs and algorithms for solving them. 
These frameworks are useful for optimization problems under uncertainty where the focus is on analyzing outcomes based on multiple decision-makers' differing perspectives, such as interdiction problems that are attacker-defender games having non-cooperative players. 
To assess the performance of the algorithms for DRR- and DRO-MSIPs, we consider instances of distributionally ambiguous multistage maximum flow and facility location interdiction problems that are important in their own right. Based on our computational results, we observe that the cutting plane-based approaches are 2800\% and 2410\% (on average) faster than the reformulation-based approaches for the foregoing instances with distributional risk-aversion and risk-receptiveness, respectively. Additionally, we conducted out-of-sample tests to showcase the significance of the DRR framework in revealing network vulnerabilities and also in mitigating the impact of data corruption.

\keywords{distributionally risk-receptive optimization \and distributionally robust optimization \and multistage stochastic programs \and disjunctive programs \and network interdiction \and robust statistics \and decision-dependent uncertainty}
\subclass{90C11 \and 90C15}
\end{abstract}

\maketitle

\section{Introduction} \label{sec:introduction}

Stochastic programming is a framework for modeling optimization problems with uncertain input data, assuming that the probability distribution associated with the uncertain parameters is known. Typically, it employs a probability distribution that approximates the \textit{true} probability distribution (which is often not known in practice) using prior knowledge or historical data. When the complete information of the distribution is not known, distributionally robust optimization provides a framework to a decision-maker for modeling risk-aversion to such \textit{distributional ambiguity} \cite{Scarf}.
However, optimization problems often involve multiple decision-makers, and the risk-aversion modeled by distributionally robust optimization may not be sufficient when the focus is on analyzing outcomes from their diverse perspectives.

For an example, consider a two-player Stackelberg game played between an attacker and a defender with distributional ambiguity. The attacker with the aim of degrading the defender's performance makes their decisions before the defender, thereby influencing the defender's subsequent decision-making process. 
These games are often used to analyze vulnerabilities in the defender's system/network (refer to \cite{Brown,Cormican,Smith,Kang} for specific examples). In this adversarial context, considering the attacker as risk-averse would represent an optimistic view of the defender, downplaying the successfully-attacked scenarios instead. Indeed, this underscores the necessity of a diametrically opposed framework, where the defender evaluates the optimistic view of the attacker as well.
We characterize this risk behavior as \textit{\underline{d}istributionally \underline{r}isk-\underline{r}eceptiveness} (DRR), in contrast with the conventional notion of distributionally risk-aversion \cite{Kang}. 
\blue{Despite this juxtaposition, we adhere to the community convention by using the term \textit{\underline{d}istributional \underline{ro}bustness (or robust)} (DRO) interchangeably with distributional risk-aversion.}

\blue{The DRR framework is also applicable when given samples may be contaminated. This optimistic optimization approach first rectifies the contamination and then makes decision.} We note that concepts similar to DRR have also been explored to mitigate the impact of corrupted data in data-driven decision-making processes (namely, robust statistics~\cite{Blanchet,BlanchetTransport,Jiang} and Rockafellian relaxations~\cite{Royset}; refer to Sect.~\ref{sec:literature_review} for details). \blue{In the context of the Stackelberg games for disrupting domestic sex trafficking networks~\cite{Kosmas} and illegal drug supply chain networks~\cite{Malaviya} where attacker (or interdictor) is the protagonist, the DRR framework allows a risk-receptive interdictor to mitigate the impact of a network user (or defender) who can deceive
the interdictor by providing false information about critical arcs of the network for maintaining the maximum flow in the network (refer to Sect.~\ref{sec:max-flow-results} for details). }

In this paper, we study DRR and DRO models within the framework of multistage stochastic programming. 
A single-stage DRR model is formulated as a min-min problem. The outer problem seeks a decision that minimizes the expected cost for a best-case probability distribution, selected by the inner problem from a set of possible probability distributions, referred to as \textit{ambiguity set}. Conversely, a single-stage DRO model is formulated as a min-max problem, where the inner problem selects a worst-case probability distribution from an ambiguity set.
We extend these models to multistage stochastic integer programs (MSIPs), where a sequence of decisions is made over a time horizon. The objective of each stage is to minimize the sum of the current cost and the expected cost of future stages. We introduce DRR-MSIP and DRO-MSIP models to address distributional ambiguity in MSIPs. Furthermore, we study DRR-MSIPs with decision-dependent uncertainty as decisions often affect uncertainty in many real-world situations such as facility location~\cite{basciftci_ddd_2021,Yu} and machine scheduling \cite{noyan_2022}. For example, in an agricultural supply chain, a farmer would sign up to transport the harvest to the storage centers, only if there is a nearby storage center that has been assigned to a processing mill with adequate demand to consume the farmer's harvest. This implies that decisions such as selection of storage centers and their assignment to the processing mill also impact the uncertain amount of harvest available at the storage centers and hence, the probability distribution of this uncertain parameter.

Despite the simplicity of the DRR models, they face significant computational challenges in terms of solution approaches. Directly solving a DRR model as a single-level minimization is impractical due to the integral defining the expectation, unless the ambiguity set is specifically chosen. With a finite support, the expectation is expressed as a summation, but this transforms the model without decision-dependent uncertainty into a mixed-integer nonconvex bilinear program, which is notoriously difficult to solve. Even without any integrality restrictions and in two-stage setup, the objective function of a DRR model is nonconvex. In a multistage stochastic decision-making setup, the complexity increases further due to factors such as the number of scenarios and stages, which expand the model size. In contrast, some solution approaches for DRO models that utilize approximations are available in the literature under certain conditions on the cost functions and feasible sets \cite{Bansal,BansalJOGO,Duque_ts_2022,Duque,GangaBansal,Philpott18,Yu}. It is worth noting that if the ambiguity sets are singleton, then DRO- and DRR-MSIPs reduce to a (risk-neutral) MSIP.

In this paper, we first address DRR- and DRO-MSIPs without decision-dependent uncertainty by presenting convex lower-approximations of the expected cost-to-go functions for best-case or worst-case distributions. We derive valid cutting planes and utilize reformulation techniques, considering finite or continuous supports. We note that while reformulations for DRR and DRO-MSIPs are derived using standard techniques, they are presented in this paper for comparative purposes as no other approaches for solving DRR-MSIPs are known in the literature. These approximations enable the application of decomposition methods, which effectively reduce complexity in a multistage decision-making setup. Specifically, we introduce customized SDDP (Stochastic Dual Dynamic Programming) algorithms \cite{Pereira} for DRR- and DRO-MSIPs that utilize these approximations. We also generalize these algorithms to solve DRR and DRO multistage stochastic disjunctive linear programs (DRR- and DRO-MSDPs) where the feasible sets are defined by disjunctive linear sets.

\blue{DRO problems where the ambiguity sets are} dependent on the decisions have been addressed by deriving their tractable reformulations \cite{Luo,Yu} and then using state-of-the-art approaches for solving these reformulations. However, a solution approach for \blue{DRR (or optimistic) optimization} with decision-dependent ambiguity set has not been studied in the literature, to the best of our knowledge. We fill this gap by introducing a reformulation-based approximation for decision-dependent DRR-MSIPs where the ambiguity sets are defined using Wasserstein distance. We also present lower-bound approximation for the associated decision-dependent optimistic ``cost-to-go-function" and then utilize it to introduce a cutting plane-based algorithm for DRR-MSIPs with decision-dependent uncertainty. 

\blue{
To demonstrate practical applications of DRR- and DRO-MSIPs, we consider multistage network interdiction problems (NIPs). Specifically, we introduce multistage stochastic models for the maximum flow interdiction problem and the facility location interdiction problem, denoted by MS-MFIP and MS-FLIP, respectively. 
MS-MFIP has applications in disrupting domestic sex trafficking networks~\cite{Kosmas} and illegal drug supply chain networks~\cite{Malaviya}, while MS-FLIP is applicable for identifying critical assets in infrastructure systems~\cite{Church04}. Despite these applications, their multistage stochastic variants have not been studied in the literature. To the best of our knowledge, only two-stage variants where each stage corresponds to the decision-making process of a single player have been explored in \cite{Cormican,Janjarassuk,Sadana}.

Through our DRO and DRR frameworks, we investigate the significance of varying risk-appetite of the decision-maker in MS-MFIP. Also, we examine an adversarial setting where data is corrupted, and demonstrate that the DRR framework can enhance the out-of-sample performance under such environments.
Lastly, we provide numerical results for both MS-MFIP and MS-FLIP, showcasing the computational efficiency of the proposed approaches.
}

In the remainder of this section, we present formulations of general DRR- and DRO multistage stochastic programs (MSPs) along with the contributions and organization of this paper.

\paragraph{Notation.} We use $[d]$ to denote set $\{1, \dots, d\}$ for any positive integer $d$.

\subsection{Problem Formulation: DRO-MSP and DRR-MSP}\label{sec:ProbForm}

We first present the Bellman equation of an (risk-neutral) MSP with a planning horizon of $T$ stages and then generalize it to the cases with distributional ambiguity, i.e., DRO- and DRR-MSPs. 
There are two types of decision variables at each stage: \textit{state} decision variables that influence subsequent stages and \textit{local} decision variables specific to the current stage.
Let $x_t$ and $y_t$ be the state and local decision vector for stage $t \in [T]$, respectively. An MSP is formulated as
\begin{equation} \label{eq:generic_msp_main}
    \min_{(x_1, y_1) \in X_1} 
        \bigg\{f_1(x_1, y_1) + \E_{P_2} \big[ Q_{2}(x_1, \bom_2) \big] \bigg\},
\end{equation}
where $\bom_t$ is a random vector that represents uncertain parameters for each stage $t = 2, \dots, T$, and the cost-to-go functions are defined as
\begin{equation} \label{eq:generic_msp_bellman}
    Q_t (x_{t-1}, \bom_t) =
    \min_{(x_t, y_t) \in X_t(x_{t-1}, \bom_t)} 
        \bigg\{f_t(x_t, y_t, \bom_t) + \E_{P_{t+1}} \big[ Q_{t+1}(x_t, \bom_{t+1}) \big]\bigg\},
\end{equation}
for $t =2, \dots, T,$ and $Q_{T+1} = 0$. 
Set $X_1$ denotes the feasible region of the first stage, and set $X_t(x_{t-1}, \bom_t)$ denotes the feasible region of each stage $t\in\{2, \dots, T\}$ that depends on a decision of the previous stage $x_{t-1}$. The random vector $\bom_t$ has a sample space $\Om_t$, which is associated with a probability distribution $P_t$.

Now, to present DRO- and DRR-MSPs, we consider a set $\P_t$ of probability distributions as the ambiguity set for $t\in\{2, \dots, T\}$. Then, the bellman equation form of a DRO-MSP is given by
\begin{equation} \label{eq:dra_model}
    \min_{(x_1, y_1) \in X_1} \bigg\{f_1(x_1, y_1) + \max_{P_2 \in\P_2} \E_{P_2} \big[ Q_2^{RA} (x_1, \bom_2) \big]\bigg\},
\end{equation}
where
\begin{equation} \label{eq:dra_bellman}
\begin{split}
    &Q_t^{RA} (x_{t-1}, \bom_t) = \\
    &\quad\min_{(x_t, y_t) \in X_t(x_{t-1}, \bom_t)} 
        \bigg\{f_t(x_t, y_t, \bom_t) + \max_{P_{t+1} \in\P_{t+1}} \E_{P_{t+1}} \big[ Q_{t+1}^{RA} (x_t, \bom_{t+1}) \big]\bigg\}
\end{split}
\end{equation}
for $t = 2, \dots, T$, and $Q^{RA}_{T+1} = 0$. We refer to the value function $$\Q_{t+1}^{RA}(x_{t}) := \max_{P_{t+1} \in\P_{t+1}} \E_{P_{t+1}} \big[ Q_{t+1}^{RA} (x_t, \bom_{t+1}) \big]$$ as the \textit{pessimistic expected cost-to-go} function.
By minimizing with respect to the probability distribution within the ambiguity set in \eqref{eq:dra_model} and \eqref{eq:dra_bellman}, instead of maximizing, we can formulate a DRR-MSP as follows:
\begin{equation} \label{eq:drr_model}
\begin{split}
    \min_{(x_1, y_1) \in X_1} 
    \bigg\{f_1(x_1, y_1) + \min_{P_2 \in\P_2} \E_{P_2} \big[ Q_2^{RR} (x_1, \bom_2) \big]\bigg\},
\end{split} 
\end{equation}
where
\begin{equation} \label{eq:drr_bellman}
\begin{split}
    &Q_t^{RR} (x_{t-1}, \bom_t) = \\
    &\quad\min_{(x_t, y_t) \in X_t(x_{t-1}, \bom_t)} 
        \bigg\{f_t(x_t, y_t, \bom_t) + \min_{P_{t+1} \in\P_{t+1}} \E_{P_{t+1}} \big[ Q_{t+1}^{RR} (x_t, \bom_{t+1}) \big]\bigg\}
\end{split}
\end{equation}
for $t =2, \dots, T$, and $Q^{RR}_{T+1} = 0$. The value function $$\Q_{t+1}^{RR}(x_{t}) := \min_{P_{t+1} \in\P_{t+1}} \E_{P_{t+1}} \big[ Q_{t+1}^{RR} (x_t, \bom_{t+1}) \big]$$ is referred to as the \textit{optimistic expected cost-to-go} function. For DRR-MSPs with decision-dependent uncertainty, we consider ambiguity sets that depend on decisions, i.e., the optimistic expected cost-to-go functions are defined by
    $$
        \Q_{t+1}^{RR}(x_{t}) = \min_{P_{t+1} \in\P_{t+1}(x_t)} \E_{P_{t+1}} \big[ Q_{t+1}^{RR} (x_t, \bom_{t+1}) \big].
    $$

When the feasible region $X_t(x_{t-1}, \bom_t)$ is defined by a set of linear inequalities and disjunctive constraints, problems \eqref{eq:dra_model}-\eqref{eq:dra_bellman} and \eqref{eq:drr_model}-\eqref{eq:drr_bellman} are referred to as DRO- and DRR-MSDPs, respectively, i.e., 
\begin{equation} \begin{split} \label{eq:disjunctive_set}
    &X_t(x_{t-1}, \bom_t) := \bigg\{ (x_t, y_t) \in \R^{d_x}_+ \times \R^{d_y}_+:  \\
    &\qquad \bigvee_{h \in H_t} \Big( A^h_t(\bom_t) x_t + B^h_t(\bom_t) y_t \geq b^h_t(\bom_t) - C^h_t(\bom_t) x_{t-1} \Big)\bigg\}.
\end{split} \end{equation}
Here, notation $\vee$ is used to denote disjunction (``or'' logical operator). The disjunctive constraints generalize integrality constraints on variables. For example, a binary restriction on a variable, i.e., $x \in \{0,1\}$, is equivalent to a disjunction: $(x=0) \vee (x=1)$. If the disjunctive constraints in \eqref{eq:disjunctive_set} represent linear inequalities with integrality constraints on variables, then the DRO- and DRR-MSDPs reduce to DRO- and DRR-MSIPs.
The main challenges encountered in solving these problems arises from the nonconvexity of feasible regions, caused by logical disjunctions or integer variables, as well as the nonlinearity and discontinuity of objective functions. Additionally, in the DRR problems, each stage problem's objective function is nonconvex even if all variables are continuous.

\subsection{Contributions and Organization of this Paper} 
In Sect.~\ref{sec:literature_review}, we provide a review of previous studies on MSPs with and without distributional ambiguity. The organization of the rest of this paper and our contributions are as follows. 

\begin{itemize}
    \item {\it DRR-MSIPs with finite support.}
    In Sect.~\ref{sec:approx_drr_msip}, we present a class of cutting planes to under-approximate the optimistic expected cost-to-go function $\Q^{RR}_{t+1}$ at stage $t$. For the sake of computational comparisons, we also reformulate DRR-MSIPs by considering the probability distribution at each stage as a decision vector and applying a McCormick envelope. 
    We show that our SDDP-based algorithms using the cutting plane- and reformulation-based approximations are exact and finitely convergent (Sect.~\ref{sec:da_sddp}). 
    Our computational results in Sect.~\ref{sec:computational_results} show that the algorithm using the cutting plane-based approach is 24.1 times faster (on average) than the algorithm using the reformulation approach.\vspace{0.3em}

    \item {\it DRR-MSIPs with continuous support and decision-dependent ambiguity sets.}
    In Sect.~\ref{sec:approx_drr_msip_dd}, we study DRR-MSIPs with Wasserstein ambiguity sets where the radii of the Wasserstein balls depend on the decision variables. We derive a dual formulation of the inner minimization at each stage~\eqref{eq:drr_bellman}, thereby deriving a class of valid cutting planes for the optimistic expected cost-to-go function and a cutting plane-based solution approach.\vspace{0.3em}

    \item {\it DRO-MSIPs.} 
    In Sect.~\ref{sec:approx_dra_msip}, we present a cutting plane-based approximation for DRO-MSIPs with a general family of ambiguity sets and finite supports. Also, we present a reformulation-based approximation using a dual formulation of DRO-MSIPs with Wasserstein ambiguity sets and continuous/finite support. We observe that the former is 28 times (on average) faster than the latter approach for solving DRO-MSIP with finite support.\vspace{0.3em} 

    \item \textit{Out-of-sample Performance.} By conducting out-of-sample tests, we demonstrate the significance of DRR and DRO in the context of interdiction problem, in particular, MS-MFIP. The results in Sect.~\ref{sec:comp_result_out_of_sample1} show that the DRO framework enhances the robustness of decision policies under uncertainty, and the DRR framework enables the identification of network vulnerabilities by giving more weights to unfavorable scenarios for the network user. Furthermore, in Sect.~\ref{sec:comp_result_out_of_sample2} we illustrate out-of-sample performance in an adversarial setting, where the sample data are intentionally corrupted (refer to robust statistics in Sect. \ref{sec:literature_review} for details). The results show that the DRR policies behaves robust to data corruption by decreasing the significance of the corrupted data points. \vspace{0.3em}
    
    \item {\it DRR-MSDPs and DRO-MSDPs.} In Sect.~\ref{sec:extensions_da_msdp}, we introduce algorithms for solving MSDPs, DRR-MSDPs, and DRO-MSDPs by deriving tight extended formulations for parametric disjunctive constraints in each stage. Since MSIPs are special cases of MSDPs, we utilize the foregoing approach for solving MSIPs with(out) distributional ambiguity as well, where a hierarchy of relaxations of the feasible regions is obtained in each iteration.\vspace{0.3em}

    \item {\it MS-MFIP and MS-FLIP.} As mentioned in Sect.~\ref{sec:introduction}, MS-MFIP and MS-FLIP are important interdiction problems in their own right and have not been studied in the literature. We present algorithms to solve these problems and their distributionally ambiguous variants, thereby generalizing results of \cite{Church04,Cormican,Janjarassuk,Malaviya,Kosmas} that study special cases of these problems.
\end{itemize}

\noindent In Sect.~\ref{sec:computational_results}, we present our computational results and concluding remarks in Sect.~\ref{sec:conclusion}. For readers' convenience, we list major abbreviations used in this paper in Appendix~\ref{apx:abbr} and provide all proofs in Appendix~\ref{apx:proofs}.
Throughout the paper, we made the following assumptions:
\begin{assumption} \label{amp:stagewise_ind}
    The random vectors are stage-wise independent, i.e., $\bom_t$ is independent of $\bom_{[t-1]}=(\bom_2, \dots, \bom_{t-1})$ for all $t = 3, \dots, T$.    
\end{assumption}
We note that the stage-wise independence assumption is required for computationally efficient algorithms, but not deriving convex approximations.

\begin{assumption} \label{amp:integrality}
    The state variables $x_t$ are binary for all $t \in [T]$. The feasible sets $X_1$ and $X_t(\cdot, \cdot)$ are defined as mixed-integer sets (and disjunctive sets only in Sect.~\ref{sec:extensions_da_msdp}).
\end{assumption} \vspace{-1em}
\blue{
\begin{remark} \label{rem:binary_expansion}
    For general integer or discrete state variables $x_t$ that are bounded, we can obtain their equivalent binary representation by using the binary expansion. 
\end{remark}
}\vspace{-1em}
\begin{assumption} \label{amp:feasible_region}
    Sets $X_1$ and $X_t(x_{t-1}, \om_t)$, given any $x_{t-1}\in\{0,1\}^{d_x}$ and $\om_t \in \Om_t$, for all $t = 2, \dots, T$, are nonempty and compact. 
\end{assumption}
\begin{assumption} \label{amp:finite_support} 
    The supports $\Om_t$, for all $t = 2, \dots, T$, are finite, i.e., $|\Om_t| < \infty$. Accordingly, let $p_t^i$ be the probability of scenario $\om_t^i$, i.e., $P_t(\bom_t = \om_t^i)$, for $i \in \N_t$, where $\N_t := \{1, \dots, N_t\}$ is the index set associated with the support $\Om_t$, $t = 2, \dots, T$.
\end{assumption}
Throughout the paper, we assume finite supports unless stated otherwise. Specifically, we will relax this assumption and consider continuous supports in Sect.~\ref{sec:approx_drr_msip_dd} and Remark~\ref{rem:dra_continuous} in Sect.~\ref{sec:approx_dra_msip}.
\begin{assumption} \label{amp:linear_objective_function}
    Functions $f_1(x_1, y_1)$ and $f_t(x_t, y_t, \om_t),$ for $\om_t \in \Om_t$ and $t = 2, \dots, T$ are linear.
\end{assumption}

\section{Literature Review} \label{sec:literature_review}

In this section, we examine studies related to the DRR framework. In addition, we review the literature on solution approaches for multistage stochastic linear programs (MSLPs), MSIPs, and their distributionally robust variants.

\subsection{Distributionally Risk-Receptive Programs and Robust Statistics}

In the literature, the DRR framework has been studied in three main contexts: 
(a) making decisions robust to outliers, errors, and adversarial corruptions in a data-driven setting \cite{Blanchet,Royset}, 
(b) analyzing outcomes based on multiple decision-makers' differing perspective and varying risk-appetite of the decision-makers \cite{Kang}, 
and (c) obtaining the bounds on the true expectations \cite{Duchi,CaoGao}.

In the context of robust statistics, Blanchet et al.~\cite{Blanchet} discuss the connection between the DRR framework and robust statistics, which aim to seek a reliable estimator given samples that may be contaminated. They present the DRR framework as an optimistic optimization approach that first rectifies the contamination and then makes decision. In \cite{BlanchetTransport}, as a rectifying-optimizing approach, a DRR program with an ambiguity set defined by an optimal transport distance has been studied. Royset et al.~\cite{Royset} also study a similar optimistic approach based on \textit{Rockafellian relaxations}. Jiang and Xie~\cite{Jiang} show that the DRR framework with a specific selection of ambiguity sets can recover many robust statistics, such as median and the least trimmed squares.

In another direction, DRR programs are utilized in reinforcement learning \cite{SongTrustRegion} to find an optimistic policy and Bayesian statistics \cite{NguyenDROLikelihood} to approximate a likelihood. 
In \cite{Gotoh}, the authors investigates the DRO and DRR frameworks in comparison to a sample average approximation approach for out-of-sample performance. In particular, they show that by solving both DRO and DRR programs, one of their solutions always outperform the sample average approximation solutions in out-of-sample test.

In the context of obtaining bounds, Duchi et al.~\cite{Duchi} employ the DRR and DRO frameworks to construct a confidence interval for the true optimal objective of a stochastic optimization problem, which can be used to determine the size of the ambiguity sets for a given confidence interval size. Similarly, Cao and Gao~\cite{CaoGao} address problems involving covariate data where uncertain parameters belong to a specific uncertainty set. They show that solving robust and optimistic optimization problems yields worst-case and best-case rewards, respectively, thus forming a confidence interval for the true reward. Nakao et al.~\cite{NakaoMDP} consider a partially observable Markov decision process with distributional ambiguity. They solve a DRR model to obtain an upper bound on the true value function of a DRO partial observable Markov decision process.

To the best of our knowledge, the literature lacks a solution approach for DRR programs involving multistage decision-making with integer variables, which is applicable to interdiction problems (as discussed in Section \ref{sec:introduction}).

\subsection{Multistage Stochastic and Distributionally Robust Programs}

\paragraph{Multistage Stochastic Linear Programs.}
To solve MSLPs, Nested Benders Decomposition (NBD) approximates the cost-to-go function $Q_{t+1}$ at each stage $t$ by a piecewise linear convex function using Benders cutting planes, which are constructed by the dual solutions from the problem at the subsequent stage~\cite{BirgeNBD}. For MSLPs under the stage-wise independence assumption, SDDP~\cite{Pereira} is a NBD-like approach that harnesses scenario sampling to mitigate ``curse of dimensionality'' of dynamic programming without losing the (almost surely) finite convergence of the NBD algorithm.
The iteration complexity of SDDP is also examined in \cite{Lan}. 

\paragraph{Distributionally Robust (or Risk-averse) MSLPs.} Recently, Philpott et al.~\cite{Philpott18} consider distributionally robust multistage stochastic linear programs (denoted by DRO-MSLPs) where ambiguity sets are constructed based on $\chi^2$ distance from a reference probability distribution. Their approaches are based on SDDP embedding separation algorithms that compute a worst-case probability distribution for the different reference probability distributions. Note that their problems consider only continuous variables. Duque and Morton~\cite{Duque} present an SDDP-based algorithm for DRO-MSLPs where the ambiguity set is defined using Wasserstein metric. Their approach is based on the dualization of the inner problem for finding a worst-case probability distribution. They present comparison analysis of the results from their algorithm and those from the modified algorithm of \cite{Philpott18} for Wasserstein metric. Park and Bayraksan~\cite{Park} investigate DRO-MSLPs, where the ambiguity set is defined using $\phi$-divergence, for an application to a water allocation problem. They propose a NBD-type algorithm relying on the dual reformulation of the inner problem for finding a worst-case probability distribution.

\paragraph{Multistage Stochastic Integer Programs.} For solving MSIPs with binary state variables, an extension to SDDP, referred to as SDDiP, has been proposed by Zou et al.~\cite{Zou}. SDDiP uses a new class of cutting planes, constructed based on a Lagrangian relaxation where the strong duality holds for the resulting Lagrangian dual, to approximate the cost-to-go function in each stage. They provide the cut conditions under which SDDiP is finitely convergent.
For a more general class of MSIPs, where all decision variables are allowed to be mixed-integer, there are several studies applying scenario-wise decomposition schemes to the deterministic equivalent formulation. For example, Carøe and Schultz~\cite{Caroe} present a branch-and-bound algorithm based on a dual decomposition approach applied to the deterministic equivalent formulation. The approach uses a Lagrangian relaxation of \textit{non-anticipativity} constraints which enforce the scenarios that follow the same history up to stage $t$ to have the same decisions until stage $t$. Lulli and Sen~\cite{Lulli} propose a branch-and-price algorithm, i.e., a branch-and-bound algorithm with column generation, for the same class of MSIPs.
We also note that for multistage stochastic mixed-integer nonlinear programs, Zhang and Sun~\cite{Zhang} present three decomposition algorithms---one based on NBD and another based on SDDP---which rely on a regularization of the expected cost-to-go function, i.e., $\E_{P_{t+1}}[Q_{t+1}(x_t,\bom_{t+1})]$ at stage $t$, and a class of cutting planes called \textit{generalized conjugacy cut} to approximate the function.

\paragraph{Distributionally Robust MSIPs.} Yu and Shen~\cite{Yu} investigate decision dependent DRO-MSIPs, where state variables are binary and the ambiguity set depends on the state decisions made at the previous stage. They consider three types of ambiguity sets constructed based on the decision-dependent moment information (e.g., mean and variance). They propose mixed-integer linear programming and mixed-integer semidefinite programming reformulations of the problems and solve them using SDDiP. Recently, in the dissertation of Nakao~\cite{Nakao}, a dual decomposition approach is presented for a DRO-MSIP where variables can be mixed-integer and the ambiguity sets are defined using Wasserstein metric. The approach applies the dual reformulations to the inner problems over Wasserstein ambiguity sets in a consecutive manner for deriving a monolithic-minimization deterministic equivalent formulation of the DRO-MSIP. Then, they use a Lagrangian relaxation of the non-anticipativity constraints in the deterministic equivalent formulation to derive a Lagrangian dual. 
Bayraksan et al.~\cite{bayraksan_bounds_2024} divide the scenario tree into subgroups to obtain lower bounds and present conditions for selecting the radii of the ambiguity sets of the subgroup problems, thereby leading to another decomposition approach for DRO-MSIPs.

\section{Convex Approximations for DRR-MSIPs having Finite Supports} \label{sec:approx_drr_msip}

In this section, we present two convex approximations of the optimistic expected cost-to-go function $\Q^{RR}_{t+1}(x_t)$ at stage $t$.
We start by considering affine cuts valid for \blue{$Q^{RR}_{t+1}$:
\begin{align} \label{eq:valid_cut}
    Q^{RR}_{t+1}(x_{t}, \om_{t+1}^i) \geq (\alpha^{i,k}_{t})^\top x_{t} + \beta^{i,k}_{t}, \quad \forall x_{t} \in \{0, 1\}^{d_x},
\end{align}
and denote them by their coefficients $(\alpha_{t}^{i,k}, \beta_{t}^{i,k})$, where $i \in \N_{t+1}$ indexes the support $\Om_{t+1}$, and $k \in [K_t]$, with $K_t \geq 1$ represents the number of cuts.}
Throughout this section, we suppose these cuts are given for every stage $t$. Discussions regarding the cut generating procedure and properties of these cuts are deferred to Sect.~\ref{sec:da_sddp}.
\blue{
Using the cuts, we define the following nonconvex (bilinear) approximating problem:
\begin{subequations}
\begin{align} \label{eq:drr_simple_approx}
    \phi^{B}_t(x_t) :=
    \min_{\theta_t, P_{t+1}} \ &
        \sum_{i \in \N_{t+1}} p_{t+1}^i \theta_t^i \\
        \text{s.t.}\ 
        & \theta_t^i \geq (\alpha_{t}^{i,k})^\top x_t + \beta_{t}^{i,k}, \quad \forall k \in [K_t],\ i \in \N_{t+1},\\
        & P_{t+1} \in \P_{t+1}.
\end{align}
\end{subequations}
Note that we consider the probabilities $P_{t+1} = (p^i_{t+1})_{i \in \N_{t+1}}$ as decision variables. This leads to bilinear terms involving variables $p^i_{t+1}$ and $\theta^i_t$ in the objective function.
By construction, the function $\phi^{B}_t(x_t)$ provides a lower bound for the optimistic expected cost-to-go function $\Q^{RR}_{t+1}(x_t)$, yet solving this problem directly is not desirable due to the bilinear terms.
}
In the following sections, we derive convex approximations of this problem using a new class of cutting planes and a mixed-integer linear programming reformulation, respectively. For simplicity of exposition, we let $K_t = 1$ for all $t \in [T-1]$ and suppress the index $k$ in notation.

\subsection{A cutting plane-based approximation for DRR-MSIP} \label{sec:cutting_plane_approximation}
Consider any $\hat{x}_t \in \{0,1\}^{d_x}$. We define a cutting plane-based approximating function as:
\begin{align} \label{eq:drr_decomp}
    \phi^{C}_t(x_t) := 
    \min \Big\{ \phi : 
    \phi \geq \pi_{t}^\top (x_{t} - \hat{x}_{t}) + \gamma_t \Big\}
\end{align}
where the parameters of the inequalities are given by
\begin{equation} \label{eq:drr_decomp_coefficients}
    \pi_{t,j} :=
    \begin{cases}
        \min_{P_{t+1} \in \P_{t+1}} \sum_{i \in \N_{t+1}} p^i_{t+1} \alpha^{i}_{t,j}, \quad \text{{\normalfont if }} \hat{x}_{t,j} = 0, \\
        \max_{P_{t+1} \in \P_{t+1}} \sum_{i \in \N_{t+1}} p^i_{t+1} \alpha^{i}_{t,j}, \quad \text{{\normalfont if }} \hat{x}_{t,j} = 1,
    \end{cases}
    \text{for } j \in [d_x],
\end{equation}
and
$
    \gamma_t := \min_{P_{t+1} \in \P_{t+1}} \sum_{i \in \N_{t+1}} p^i_{t+1} \Big( (\alpha^{i}_t)^\top \hat{x}_t + \beta^{i}_t \Big),
$
for each $t \in [T-1]$.
Apparently, this function is convex. Also, it provides a lower-bound for $\Q^{RR}_{t+1}$ as shown in Theorem~\ref{thm:drr_cutting_plane_approx}.
\begin{theorem} \label{thm:drr_cutting_plane_approx}
    The function $\phi^{C}_t$ provides a lower bound for the optimistic expected cost-to-go function, i.e.,
    $\phi^{C}_t(x_t) \leq \Q^{RR}_{t+1}(x_{t})$ for all $x_{t} \in \{0, 1\}^{d_x}$ and $t \in [T-1]$.
\end{theorem}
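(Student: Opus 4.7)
The plan is to prove $\phi^{C}_t(x_t) \leq \Q^{RR}_{t+1}(x_t)$ by a sandwich argument that passes through a linear scenario-wise surrogate. First I would apply the validity of the cuts \eqref{eq:valid_cut} scenario by scenario: for every $P_{t+1} \in \P_{t+1}$ and every $x_t \in \{0,1\}^{d_x}$,
$$\E_{P_{t+1}}\big[Q^{RR}_{t+1}(x_t,\bom_{t+1})\big] \;\geq\; \sum_{i \in \N_{t+1}} p^i_{t+1}\big[(\alpha^i_t)^\top x_t + \beta^i_t\big] \;=:\; g(x_t;P_{t+1}),$$
so that $\Q^{RR}_{t+1}(x_t) \geq \min_{P_{t+1} \in \P_{t+1}} g(x_t;P_{t+1})$. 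The remaining task is to show that the single affine inequality defining $\phi^C_t(x_t) = \pi_t^\top(x_t - \hat{x}_t) + \gamma_t$ is uniformly dominated by $g(x_t;P_{t+1})$ across all $P_{t+1} \in \P_{t+1}$.

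For this second step, I would write the surrogate as $g(x_t;P_{t+1}) = \sum_{j=1}^{d_x} a_j(P_{t+1})\,x_{t,j} + b(P_{t+1})$ with $a_j(P) := \sum_i p^i \alpha^i_{t,j}$ and $b(P) := \sum_i p^i \beta^i_t$, and expand about $\hat{x}_t$ to obtain
$$g(x_t;P_{t+1}) \;=\; \sum_{j=1}^{d_x} a_j(P_{t+1})(x_{t,j}-\hat{x}_{t,j}) \;+\; g(\hat{x}_t;P_{t+1}).$$
Since $\gamma_t = \min_{P \in \P_{t+1}} g(\hat{x}_t;P)$, we have $\gamma_t \leq g(\hat{x}_t;P_{t+1})$. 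It then suffices to prove the per-coordinate inequality $\pi_{t,j}(x_{t,j}-\hat{x}_{t,j}) \leq a_j(P_{t+1})(x_{t,j}-\hat{x}_{t,j})$ for each $j \in [d_x]$. This is where binarity of $\hat{x}_t$ (Assumption~\ref{amp:integrality}) is essential: when $\hat{x}_{t,j}=0$ the difference $x_{t,j}-\hat{x}_{t,j} \in \{0,1\}$ is non-negative and the min-based choice $\pi_{t,j} = \min_P a_j(P)$ in \eqref{eq:drr_decomp_coefficients} gives the inequality; when $\hat{x}_{t,j}=1$ the difference lies in $\{-1,0\}$, the sign flips, and the symmetric max-based choice $\pi_{t,j} = \max_P a_j(P)$ again gives the desired bound. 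Summing over $j$ and combining with $\gamma_t \leq g(\hat{x}_t;P_{t+1})$ yields $\phi^C_t(x_t) \leq g(x_t;P_{t+1})$ for every $P_{t+1}\in\P_{t+1}$; minimizing the right-hand side and chaining with the surrogate bound from step one concludes the argument.

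The main subtlety to watch is that the coefficients $\pi_{t,j}$ and $\gamma_t$ need not arise from a single common minimizer in $\P_{t+1}$; different coordinates may be tight at different distributions. The argument still goes through because the linear surrogate $g(\cdot;P_{t+1})$ decouples coordinate-wise and each chosen $\pi_{t,j}$ provides a pointwise bound on $a_j(P)$ valid for \emph{every} admissible $P \in \P_{t+1}$, so no coordinate consistency is required. Finally, the generalization to $K_t > 1$ cuts is straightforward: applying the same argument to each cut index $k \in [K_t]$ produces an affine under-estimator of $\Q^{RR}_{t+1}$, and the pointwise maximum of these estimators, which defines $\phi^C_t$ in the multi-cut setting, remains a lower bound on $\Q^{RR}_{t+1}$.
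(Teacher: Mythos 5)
Your proposal is correct and follows essentially the same route as the paper's own proof: both use validity of the cuts to pass to the linear surrogate, expand about $\hat{x}_t$, bound the constant term by $\gamma_t$, and handle each coordinate via the sign of $x_{t,j}-\hat{x}_{t,j}$ (which binarity forces into $\{0,1\}$ or $\{-1,0\}$) to justify the min/max choice of $\pi_{t,j}$. The only cosmetic difference is that you fix an arbitrary $P_{t+1}$ and establish uniform domination before minimizing, whereas the paper pushes the $\min_{P_{t+1}}$ operator through the sum term by term; the underlying decomposition and case analysis are identical.
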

An important property of this cut is that it preserves the tightness of the cuts $(\alpha_t^{i}, \beta_t^{i})$, i.e., 
it intersects with $\Q^{RR}_{t+1}$ at $\hat{x}_t$ if the cuts $(\alpha^{i}_t, \beta^{i}_t)$ intersect with $Q^{RR}_{t+1}(\cdot, \om^i_{t+1})$ at $\hat{x}_t$ for all $i \in \N_{t+1}$. This property plays a key role in showing the finite convergence of our algorithm presented in Sect.~\ref{sec:da_sddp}.
\blue{Additionally, it is important to note that the cuts~\eqref{eq:drr_decomp} are different from Benders cuts, which rely on the duality results of problems. Since the DRR problem~\eqref{eq:drr_bellman} is nonconvex even when involving continuous variables, convex combination of Benders cuts cannot be applied to approximate the optimistic expected cost-to-go function~$\Q^{RR}_{t+1}(x_t)$ as it can be done for risk-neutral and pessimistic expected cost-to-go-function. 
}

\subsection{A reformulation-based approximation for DRR-MSIP} \label{sec:drr_sddp_r}

We present a reformulation-based approximation for DRR-MSIP by treating the probability distribution as variables and using McCormick envelopes for bilinear terms. Though straightforward, it remains the only approach using existing techniques to solve this problem and therefore, we include it for comparative purpose. More specifically, it works as follows. 

For each constraint in \eqref{eq:drr_simple_approx}, we multiply $p_{t+1}^i$ to both sides of inequalities for each $i \in \N_{t+1}$, and replace $p_{t+1}^i x_t$ with a decision vector $\eta_{t}^i$ in the right-hand side of the resulting inequalities. This yields the following system of inequalities:
\begin{subequations} \begin{alignat}{2}
    p_{t+1}^i \theta_t^i \geq (\alpha_t^{i})^\top \eta_t^i + \beta_t^{i} p_{t+1}^i, \quad \forall i \in \N_{t+1}, \label{eq:reform_inequalities_a} \\
    \eta_t^i \leq x_t, \ 
    \eta_t^i \leq p_{t+1}^i, \ 
    \eta_t^i \geq p_{t+1}^i + x_t - 1, \ 
    \eta_t^i \geq 0, \quad \forall i \in \N_{t+1}. \label{eq:reform_inequalities_b}
\end{alignat} \end{subequations}
Notice that a system of inequalities~\eqref{eq:reform_inequalities_b} ensure that a feasible $\eta_t^i$ equals to $p_{t+1}^i x_t$, given any $x_t \in \{0,1\}^{d_x}$ and $(p_{t+1}^i)_{i \in \N_{t+1}} \in [0, 1]^{N_{t+1}}$.
We introduce an additional variable $\bar{\theta}_t^i$ to replace $p^i_{t+1} \theta_t^i$.
This yields an approximating function of $\Q^{RR}_{t+1}$ for each $t \in [T-1]$ as follows:
\begin{subequations} \label{eq:drr_reform_approx}
\begin{align}
    \phi^{R}_t(x_t) := 
    \min_{P_{t+1}, \bar{\theta}, \eta_t} \bigg\{ \sum_{i \in \N_{t+1}} \bar{\theta}_i :
        \bar{\theta}_i \geq (\alpha^{i}_t)^\top \eta^i_t + \beta^{i}_t p^i_{t+1}, \ \forall i \in \N_{t+1}, \label{eq:drr_reform_approx_a} \\
        \ \eqref{eq:reform_inequalities_b},
        \ P_{t+1} \in \P_{t+1}. \label{eq:drr_reform_approx_b}
    \bigg\}
\end{align}
\end{subequations}
Since $\theta_t^i$ is not restricted, we can readily show that the equivalence between \eqref{eq:reform_inequalities_a} and constraint \eqref{eq:drr_reform_approx_a}.
Therefore, the function $\phi^{R}_t$ equals to the function $\phi^{B}_t$, which is a lower bound for $\Q^{RR}_{t+1}(x_t)$.
When the ambiguity set~$\P_{t+1}$ is defined by a polytope (e.g., Wasserstein ambiguity set with a finite support), the problem~\eqref{eq:drr_reform_approx} is a linear program. Consequently, $\phi^R_t$ becomes piecewise linear and convex by linear programming duality.

\section{Convex Approximation for DRR-MSIPs having Continuous Supports and Decision-dependent Ambiguity Sets} \label{sec:approx_drr_msip_dd}

In this section, we investigate DRR-MSIPs, where for each stage $t$ the support $\Om_t$ is continuous. Suppose that a finite set of data $\bar{\Om}_t := \{\om_t^1, \dots, \om_t^{N_t}\}$ is available. Consider an empirical distribution $\bar{P}_t = \frac{1}{N_t} \sum_{i \in \N_t} \delta_{\om_t^i}$, where $\delta_{\om_t^i}$ is the Dirac delta function centered at $\om_t^i$ for $i \in \N_t$. 
We now present a dual-based convex approximation of the optimistic expected cost-to-go function $\Q^{RR}_{t+1}(x_t)$ at stage $t$. This approximation can be applied to a further generalized case where ambiguity sets are decision-dependent. In particular, we focus on decision-dependent ambiguity sets defined using Wasserstein metric as follows:
\begin{equation} \label{eq:DD_Wasserstein_ambiguity_sets}
\begin{aligned}
    \P_{t}(x_{t-1}) := \bigg\{
        P_t \in \M(\Om_t) : 
        \W(P_t, \bar{P}_t) \leq \epsilon_t(x_{t-1})
    \bigg\}
\end{aligned}
\end{equation}
for $t = 2, \dots, T$ \cite{Luo}, where $\M(\Om_t)$ is a set of all probability distributions supported on $\Om_t$, and $\W(P_t, \bar{P}_t)$ is the $1$-Wasserstein distance defined as
\begin{equation} \label{eq:Wasserstein_distance}
\begin{aligned}
    \W(P_t, \bar{P}_t) := \inf_{P \in \P(\Om_t \times \Om_t)} \bigg\{
        \E_{\pi} \Big[\norm{\om_t - \om_t'}\Big] : P(\om_t) = P_t, \ P(\om_t') = \bar{P}_t
    \bigg\}.
\end{aligned}
\end{equation}
Here, $\P(\Om_t \times \Om_t)$ is the set of all joint probability distributions supported on $\Om_t \times \Om_t$. The marginal distributions of $\om_t$ and $\om_t'$ are denoted by $P(\om_t)$ and $P(\om_t')$, respectively, and $\norm{\cdot}$ is an arbitrary norm. For a given $\epsilon_t(x_{t-1}) > 0$, each ambiguity set $\P_t(x_{t-1}), t \in \{2, \dots, T\},$ is a Wasserstein ball containing all probability distributions within a certain radius $\epsilon_t(x_{t-1})$ from the empirical probability distribution. For the ease of exposition in this section, we use $x_t$ to represent all decision variables at each stage $t$.
\begin{proposition}[Strong duality] \label{prop:drr_continuous_strong_dual}
For the ambiguity set $\P_t(x_{t-1})$ defined as \eqref{eq:DD_Wasserstein_ambiguity_sets}, the optimistic expected cost-to-go function can be reformulated as
    \begin{equation}
    \begin{split}
        \Q^{RR}_t(x_{t-1}) &= \min_{P_t \in \P_t(x_{t-1})} \E[Q^{RR}_t(x_{t-1}, \bom_t)] \\
        & 
        \begin{aligned}
        =\max_{\rho_t \geq 0} 
        \bigg\{&
            -\epsilon_t(x_{t-1}) \rho_t \\
            & + \sum_{i \in \N_t} \frac{1}{N_t} \min_{\om_t \in \Om_t} 
            \Big\{ 
                \rho_t \norm{\om_t - \om_t^i} + Q^{RR}_t(x_{t-1}, \om_t)
            \Big\}
        \bigg\}.
        \end{aligned}
    \end{split}
    \end{equation}
\end{proposition}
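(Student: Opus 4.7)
The plan is to derive the dual formulation via a Lagrangian approach applied to the Wasserstein ball constraint, exploiting that the reference distribution $\bar{P}_t$ is discretely supported on $\bar{\Om}_t = \{\om_t^1,\dots,\om_t^{N_t}\}$ with uniform weights $1/N_t$. First I would rewrite $\W(P_t,\bar{P}_t)$ using the definition \eqref{eq:Wasserstein_distance} and observe that since the second marginal is the empirical distribution $\bar{P}_t$, any joint distribution $\pi\in\P(\Om_t\times\Om_t)$ with marginal $\bar{P}_t$ on the second coordinate decomposes as $\pi=\sum_{i\in\N_t}\tfrac{1}{N_t}\pi_i\otimes\delta_{\om_t^i}$, where each $\pi_i\in\M(\Om_t)$ is the conditional distribution of $\om_t$ given $\om_t'=\om_t^i$. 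Under this decomposition, $P_t = \sum_{i\in\N_t}\tfrac{1}{N_t}\pi_i$, $\W(P_t,\bar{P}_t)\leq \sum_{i}\tfrac{1}{N_t}\E_{\pi_i}[\|\om_t-\om_t^i\|]$, and the primal takes the form
\[
\min_{\{\pi_i\in\M(\Om_t)\}_{i\in\N_t}}\ \sum_{i\in\N_t}\frac{1}{N_t}\E_{\pi_i}\big[Q^{RR}_t(x_{t-1},\bom_t)\big]\quad\text{s.t.}\ \sum_{i\in\N_t}\frac{1}{N_t}\E_{\pi_i}\big[\|\bom_t-\om_t^i\|\big]\leq \epsilon_t(x_{t-1}).
\]

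Next I would form the Lagrangian by introducing a multiplier $\rho_t\geq 0$ for the Wasserstein radius constraint and argue that strong duality holds, yielding
\[
\Q^{RR}_t(x_{t-1}) = \max_{\rho_t\geq 0}\ \bigg\{-\epsilon_t(x_{t-1})\rho_t + \sum_{i\in\N_t}\frac{1}{N_t}\inf_{\pi_i\in\M(\Om_t)}\E_{\pi_i}\big[Q^{RR}_t(x_{t-1},\bom_t)+\rho_t\|\bom_t-\om_t^i\|\big]\bigg\}.
\]
The inner infimum over a probability measure of a fixed integrand is achieved (or approached) by a Dirac measure at a minimizer, so
\[
\inf_{\pi_i\in\M(\Om_t)}\E_{\pi_i}\big[Q^{RR}_t(x_{t-1},\bom_t)+\rho_t\|\bom_t-\om_t^i\|\big] = \min_{\om_t\in\Om_t}\big\{\rho_t\|\om_t-\om_t^i\|+Q^{RR}_t(x_{t-1},\om_t)\big\},
\]
assuming the feasible region of the inner stage problem is compact (by Assumption~\ref{amp:feasible_region}) so that the minimum is attained; combining these pieces yields the stated dual.

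The main obstacle is justifying the min--max interchange that underlies strong duality: a direct application of Sion's theorem requires convexity/concavity and a suitable topology, but since we work with probability measures this is typically handled either by invoking the general Wasserstein duality results of Blanchet--Murthy or Gao--Kleywegt, or by a standard convex-duality argument exploiting (i) the linearity of the objective and constraint in $\pi_i$, (ii) compactness of $\Om_t$ or mild growth conditions on $Q^{RR}_t(x_{t-1},\cdot)$, and (iii) strict feasibility when $\epsilon_t(x_{t-1})>0$ (note $\bar{P}_t$ itself lies in the interior of the ball), which serves as a Slater condition. A secondary technical point is measurability of $\om_t\mapsto Q^{RR}_t(x_{t-1},\om_t)$ so that the integrals are well-defined; this follows from standard parametric optimization arguments applied to the inner problem defining $Q^{RR}_t$ under Assumption~\ref{amp:feasible_region}.
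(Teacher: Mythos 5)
Your proposal is correct, but it takes a genuinely different route from the paper. The paper's proof is a two-line reduction: it writes $\Q^{RR}_t(x_{t-1}) = -\max_{P_t \in \P_t(x_{t-1})} \E_{P_t}[-Q^{RR}_t(x_{t-1},\bom_t)]$, applies Theorem~1 of Gao and Kleywegt to the inner maximization to get the standard DRO dual with epigraph variables $v_t^i$, eliminates the $v_t^i$ by taking the pointwise maximum over $\om_t$, and pushes the negation back through to obtain the stated min form. You instead re-derive the duality from first principles for the minimization directly: disintegrating the coupling against the discrete reference measure into conditionals $\pi_i$, dualizing the single scalar transport-budget constraint with a multiplier $\rho_t$, closing the gap via a Slater point ($\bar{P}_t$ itself, requiring $\epsilon_t(x_{t-1})>0$, which the paper does assume), and collapsing the inner infimum over measures to a pointwise minimum via Dirac measures. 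The paper's approach buys brevity at the cost of outsourcing all the analytic content to the cited theorem (whose hypotheses must be checked for $-Q^{RR}_t$, e.g.\ measurability and integrability of the value function); yours is self-contained and makes explicit exactly where compactness of $\Om_t$, attainment of the inner minimum (lower semicontinuity of $Q^{RR}_t(x_{t-1},\cdot)$), and positivity of the radius enter. The remaining burden in your route is the infinite-dimensional strong-duality step itself, which you correctly flag; since the constraint map is scalar-valued and the program is linear in the measures, the standard Lagrangian duality theorem with Slater's condition applies, so the gap is closable, but writing that out rigorously is precisely the content of the Blanchet--Murthy/Gao--Kleywegt results that the paper chooses to cite instead.
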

Using Proposition~\ref{prop:drr_continuous_strong_dual}, we derive an approximation for $\Q^{RR}_t$ in Theorem \ref{thm:drr_continuous_cut} under the following assumptions. 
\begin{assumption}
    The support $\Om_t$ for each stage $t$ is a bounded hyperrectangle, i.e., $\Om_t = [l_t, u_t]^{d_\om}$.
\end{assumption}

\begin{assumption}
    Random vector $\bom \in \R^{d_\om}$ is only associated with the right-hand side of the constraints defining $X_t(x_{t-1}, \bom_t)$. Let $\X_t \subseteq \R^{d_x}$ be the set defined by the integrality constraints for $x_t$. Then the feasible sets can be written as
    \begin{equation}
    \begin{aligned}
        X_t(x_{t-1}, \bom_t) = \bigg\{
            x_t \in \X_t: A_t x_t \geq \bom_t - C_t x_{t-1}
        \bigg\}.
    \end{aligned}
    \end{equation}
\end{assumption}
\begin{assumption}
    The decision-dependent radius $\epsilon_t(x_{t-1}) : \X_{t-1} \to \R_+$ is an affine function.
\end{assumption}

\begin{theorem} \label{thm:drr_continuous_cut}
Given valid cuts' coefficients $\{(\pi^k_t, \gamma^k_t)\}_{k \in [K_t]}$ for $\Q_{t+1}^{RR}(x_t)$,
\begin{itemize}
\item[\textit{(a)}] the optimistic expected cost-to-go function $\Q^{RR}_t(x_{t-1})$ is lower-approximated as follows:
\begin{align} \label{eq:drr_continuous_cut}
\begin{split}
    \Q^{RR}_t(x_{t-1}) \geq & - \epsilon_t(x_{t-1}) \rho_t + \sum_{i \in \N_t} \frac{1}{N_t} \bigg( (-\lambda^i_t)^\top C_t x_{t-1}
    + (\mu^i_t)^\top l_t \\ & - (\nu^i_t)^\top u_t + (\lambda^i_t - \mu^i_t + \nu^i_t)^\top \om^i_t + \sum_{k \in [K_t]}\zeta^i_{tk} \gamma^k_t \bigg),
\end{split}
\end{align}
such that parameters $(\rho_t, \{\lambda^i_t, \mu^i_t, \nu^i_t, \zeta^i_t\}_{i \in \mathcal{N}_t})$ satisfy the following system of equations and inequalities \eqref{eq:drr_continuous_feasible}:
\begin{subequations}\label{eq:drr_continuous_feasible}
\begin{align} 
    & A_t^\top \lambda^i_t - \sum_{k \in [K_t]} \zeta^i_{tk} \pi^k_t = c_t, \quad \forall i \in \N_t, \label{eq:drr_continuous_feasible_a}\\
    & \norm{\lambda^i_t + \mu^i_t - \nu^i_t}_\ast \leq \rho_t, \quad \forall i \in \N_t, \label{eq:drr_continuous_dual_approx_norm}\\ 
    & \sum_{k \in [K_t]} \zeta^i_{tk} = 1, \quad \forall i \in \N_t \label{eq:drr_continuous_feasible_c}\\
    & \rho_t \geq 0, \ (\lambda^i_t, \mu^i_t, \nu^i_t, \zeta^i_t) \geq 0, \quad \forall i \in \N_t,\label{eq:drr_continuous_feasible_d}
\end{align}
\end{subequations}
where $\norm{\cdot}_\ast$ is the dual norm of $\norm{\cdot}$.

\item[\textit{(b)}] the strongest lower bound approximation~\eqref{eq:drr_continuous_cut} is obtained by solving the following problem:
\begin{align}\label{eq:drr_continuous_dual_approx}
\begin{split}
    \max & \bigg\{- \epsilon_t(x_{t-1}) \rho_t + \frac{1}{N_t} \sum_{i \in \N_t} \bigg( (-\lambda^i_t)^\top C_t x_{t-1} 
    + (\mu^i_t)^\top l_t \\ & - (\nu^i_t)^\top u_t + (\lambda^i_t - \mu^i_t + \nu^i_t)^\top \om^i_t + \sum_{k \in [K_t]}\zeta^i_{tk} \gamma^k_t \bigg) : \eqref{eq:drr_continuous_feasible_a}\text{-}\eqref{eq:drr_continuous_feasible_d} \bigg\}.
\end{split}
\end{align}
\end{itemize}
\end{theorem}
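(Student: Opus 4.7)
The plan is to combine the dual representation from Proposition~\ref{prop:drr_continuous_strong_dual}, which rewrites $\Q^{RR}_t(x_{t-1})$ as a maximization over $\rho_t \geq 0$ of an outer objective containing $N_t$ independent inner infima of the form $\inf_{\om_t \in \Om_t}\{\rho_t \norm{\om_t - \om_t^i} + Q^{RR}_t(x_{t-1},\om_t)\}$, with a convex-programming dual of the stage-$t$ cost-to-go in which the nonconvex function $\Q^{RR}_{t+1}(x_t)$ is replaced by the piecewise-linear minorant induced by the cuts. Under this rewriting, it suffices to produce a concave lower bound, in $(\rho_t, x_{t-1})$, for each inner infimum.

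I would first under-estimate $Q^{RR}_t(x_{t-1},\om_t)$ by the linear program obtained after dropping the integrality restrictions $x_t \in \X_t$ and replacing $\Q^{RR}_{t+1}(x_t)$ by a scalar variable $\theta$ constrained by $\theta \geq (\pi^k_t)^\top x_t + \gamma^k_t$ for all $k \in [K_t]$. The inner infimum then becomes a joint minimization over $(\om_t, x_t, \theta)$ of $\rho_t\norm{\om_t-\om_t^i} + c_t^\top x_t + \theta$ subject to these relaxed constraints together with the box constraints $l_t \leq \om_t \leq u_t$. I would dualize this program, attaching multipliers $\lambda^i_t \geq 0$ to the coupling inequalities $A_t x_t - \om_t \geq -C_t x_{t-1}$, $\zeta^i_{tk} \geq 0$ to the cut inequalities, and $\mu^i_t, \nu^i_t \geq 0$ to the box constraints. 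Stationarity in the free variables $x_t$ and $\theta$ yields the equality constraints~\eqref{eq:drr_continuous_feasible_a} and~\eqref{eq:drr_continuous_feasible_c}, while the partial minimization in $\om_t$ collapses, via the Fenchel identity that $\inf_z\{\rho\norm{z} + v^\top z\}$ equals $0$ when $\norm{v}_\ast \leq \rho$ and $-\infty$ otherwise, to the dual-norm inequality~\eqref{eq:drr_continuous_dual_approx_norm}.

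The constants left behind by this dualization assemble into the summand in~\eqref{eq:drr_continuous_cut}: the coupling-constraint residual $-(\lambda^i_t)^\top C_t x_{t-1}$, the box-constraint contributions $(\mu^i_t)^\top l_t - (\nu^i_t)^\top u_t$, the affine shift $(\lambda^i_t - \mu^i_t + \nu^i_t)^\top \om_t^i$ arising from evaluating the norm argument at $\om_t^i$, and the cut-intercept term $\sum_k \zeta^i_{tk}\gamma^k_t$. Weak duality then certifies that any tuple $(\rho_t, \{\lambda^i_t, \mu^i_t, \nu^i_t, \zeta^i_t\}_{i \in \N_t})$ satisfying~\eqref{eq:drr_continuous_feasible_a}--\eqref{eq:drr_continuous_feasible_d} lower-bounds the inner infimum for each $i$; averaging with weight $1/N_t$ and adjoining $-\epsilon_t(x_{t-1})\rho_t$ from the outer objective of Proposition~\ref{prop:drr_continuous_strong_dual} yields precisely~\eqref{eq:drr_continuous_cut}, establishing part~(a). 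Part~(b) is then immediate because maximizing this bound over the set of tuples satisfying~\eqref{eq:drr_continuous_feasible_a}--\eqref{eq:drr_continuous_feasible_d} delivers the tightest attainable inequality of the form~\eqref{eq:drr_continuous_cut}, and that maximization is precisely problem~\eqref{eq:drr_continuous_dual_approx}.

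The main technical hurdle will be aligning the sign conventions in the collapsed $\om_t$-minimization so that the constant $(\lambda^i_t - \mu^i_t + \nu^i_t)^\top \om_t^i$ and the dual-norm constraint~\eqref{eq:drr_continuous_dual_approx_norm} come out consistently with the chosen orientations of the box duals $\mu^i_t$ and $\nu^i_t$, and verifying that the affine assumption on $\epsilon_t(\cdot)$ together with the linearity of the coupling in $x_{t-1}$ actually yields a concave (indeed affine in $x_{t-1}$ for fixed duals) lower bound. Beyond this bookkeeping, the remaining steps are routine checks that the LP relaxation of the stage-$t$ problem under-estimates $Q^{RR}_t(x_{t-1}, \om_t)$ and that the joint Lagrangian dual separates cleanly across the sample index $i$.
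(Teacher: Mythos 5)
Your proposal is correct and follows essentially the same route as the paper: invoke Proposition~\ref{prop:drr_continuous_strong_dual}, under-estimate each inner infimum by the continuous relaxation with the cut-induced minorant of $\Q^{RR}_{t+1}$, dualize with exactly the multiplier assignment you describe, collapse the $\om_t$-minimization via the dual-norm/Fenchel identity (which the paper proves as a standalone claim using H\"older's inequality), and conclude by weak duality. The sign-bookkeeping issue you flag is real but minor — the paper's proof works with $\|\lambda^i_t - \mu^i_t + \nu^i_t\|_\ast \leq \rho_t$ while the theorem statement displays $\|\lambda^i_t + \mu^i_t - \nu^i_t\|_\ast$, so your attention to orienting the box-constraint duals is warranted.
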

Problem~\eqref{eq:drr_continuous_dual_approx} is a cut-generating problem, where any feasible solution yields a cut in the form of \eqref{eq:drr_continuous_cut}. With the choice of specific norms, problem~\eqref{eq:drr_continuous_dual_approx} becomes computationally manageable. For instance, if $\norm{\cdot}$ represents the $l_1$ norm, the dual norm is the $l_\infty$ norm, allowing the linearization of constraints~\eqref{eq:drr_continuous_dual_approx_norm} and transformation of  problem~\eqref{eq:drr_continuous_dual_approx} into a linear program. Also, if $\norm{\cdot}$ is the $l_2$ norm, its dual norm is also the $l_2$ norm, resulting in problem~\eqref{eq:drr_continuous_dual_approx} becoming a second-order conic program.

Using the cuts generated as above, we derive a cutting plane-based approximating function for $\Q^{RR}_t$ as in the form of \eqref{eq:drr_decomp}. It is important to note that the cuts for next stage $(t+1)$ in Theorem~\ref{thm:drr_continuous_cut} are naturally given through our algorithmic procedure (discussed in Sect.~\ref{sec:da_sddp}). Our algorithm produces cuts in a descending order of stages, from $T$ down to $1$, and thus there always exists cuts for stage $(t+1)$ when generating a cut at stage $t$.

\section{Convex Approximations for DRA-MSIPs} \label{sec:approx_dra_msip}

In this section, we present two convex approximations of the pessimistic expected cost-to-go functions $\Q^{RA}_{t+1}(x_t)$ for stage $t$. These approximations are constructed using cutting planes derived by a separation approach and a reformulation derived by utilizing strong duality, respectively. 

\subsection{A cutting plane-based approximation for DRA-MSIP} \label{sec:dra_sddp_c}

We assume that the supports are finite (Assumption~\ref{amp:finite_support}), and the valid cuts defined by $(\alpha_t^i, \beta_t^i)$ for $Q^{RA}_{t+1}(\cdot, \om^i_{t+1})$ for $t \in [T-1]$ and $i \in \N_{t+1}$ are available. Given cuts $(\alpha^{i}_{t}, \beta^{i}_{t}),$ for $i \in \N_{t+1},$ and solution $\hat{x}_t$, we identify a worst-case probability distribution by solving the following problem, referred to as \textit{distribution separation problem} at stage $t \in [T-1]$:
\begin{align} \label{eq:dra_dsp}
    \max_{P_{t+1} \in \P_{t+1}} \sum_{i \in \N_{t+1}} p_{t+1}^i \Big( (\alpha^{i}_{t})^\top \hat{x}_{t} + \beta^{i}_{t} \Big).
\end{align}
Let $\hat{P}_{t+1} = (\hat{p}^{1}_{t+1}, \dots, \hat{p}^{N_{t+1}}_{t+1})$ be an optimal solution to the distribution separation problem~\eqref{eq:dra_dsp}. 
We define a cutting plane-based approximating function for $t \in [T-1]$ as
\begin{equation} \label{eq:dra_sep_approx}
    \phi^{S}_t(x_t) := \min \Big\{ \phi : \phi \geq \pi_t^\top x_t + \gamma_t \Big\},
\end{equation}
where 
$
    \pi_t = \sum_{i \in \N_{t+1}} \hat{p}^{i}_{t+1} \alpha^{i}_t,
$
and
$
    \gamma_t = \sum_{i \in \N_{t+1}} \hat{p}^{i}_{t+1} \beta^{i}_t.
$
Clearly, the function $\phi^{S}_t$ under-approximates $\Q^{RA}_{t+1}$, since for solution $\bar{x}_t \neq \hat{x}_t$ it holds that $\Q^{RA}_{t+1}(\bar{x}_t) \geq \max_{P_{t+1} \in \P_{t+1}} \sum_{i \in \N_{t+1}} p_{t+1}^i ( (\alpha^{i}_{t})^\top \bar{x}_{t} + \beta^{i}_{t} ) \geq \phi^S_t(\bar{x}_t)$. Note that there are various types of ambiguity sets for which we can solve problem~\eqref{eq:dra_dsp} and compute the coefficients~\eqref{eq:drr_decomp_coefficients} using a finite-time algorithm; e.g., ambiguity sets constructed using Wasserstein metric \cite{Gao}, moment information \cite{Yu}, total variation distance \cite{Bayraksan} and $\chi^2$ distance \cite{Philpott18}, where their supports are finite.

\subsection{A reformulation-based approximation for DRA-MSIP} \label{sec:dra_sddp_r}

We present a reformulation-based approximation that relies on the dualization of the inner maximization problems in \eqref{eq:dra_model} and \eqref{eq:dra_bellman}, i.e., $\max\{\E_{P_{t+1}}[Q^{RA}_{t+1}(\cdot, \cdot)]: P_{t+1} \in \P_{t+1}\}$ for $t \in [T-1]$. 
We note that similar reformulation-based approaches are studied in recent papers for DRA-MSLPs with $\phi$-divergence-based ambiguity sets \cite{Park}, DRA-MSLPs with Wasserstein ambiguity sets \cite{Duque}, and DRA-MSIPs with moment-based ambiguity sets \cite{Yu}. In the following, we demonstrate the DRA-SDDP-R algorithm for DRA-MSIPs with Wasserstein ambiguity sets, but we note that this approach can be applied to DRA-MSIPs with a general family of ambiguity sets for which dual formulations of DRA models are available.

The Wasserstein ambiguity set with finite support is defined for $t = 2, \dots, T$ as
\begin{equation*}
\begin{aligned}
    \P_{t} = \Bigg\{
        P_t \in \R_+^{N_t} : 
        \sum_{i \in \N_t} p_{t}^i = 1, 
        \ \sum_{j \in \N_t} v_{ij} = p_{t}^i, \ i \in \N_t, 
        \ \sum_{i \in \N_t} v_{ij} = \bar{p}_{t}^j, \ j \in \N_t, \\
        \sum_{i \neq j \in \N_t} {\| \om_t^i - \om_t^j \| v_{ij} \leq \epsilon_t}, 
        \ v_{ij} \geq 0, \ \forall i, j \in \N_t
    \Bigg\},
\end{aligned}
\end{equation*}
where $\{\bar{p}_t^i\}_{i \in \N_t}$ is a reference probability distribution on $\Om_t$ for $t \in \{2, \dots, T\}$.
The dual of the maximization in \eqref{eq:dra_model} and \eqref{eq:dra_bellman} for $t \in [T-1]$ is given as follows: 
\begin{equation*}
\begin{split}
    \min_{\rho_t \geq 0} \bigg\{ & \epsilon_{t+1} \rho_t + \sum_{i \in \N_{t+1}} \bar{p}_{t+1}^i \nu^i_t: \\
    &\nu^i_t + \norm{\om^i_{t+1} - \om^j_{t+1}} \rho_t \geq Q^{RA}_{t+1}(x_t, \om^j_{t+1}), \ \forall i,j\in\N_{t+1}
    \bigg\}.
\end{split}
\end{equation*}
Here, the strong duality holds by Theorem~1 in Gao and Kleywegt~\cite{Gao}. Then, given cuts $(\alpha^{i}_t, \beta^{i}_t), i \in \N_{t+1}$, we define a reformulation-based approximating function for $t \in [T-1]$ as
\begin{subequations} \label{eq:dra_reform_approx}
\begin{align}
    \phi^{D}_t(x_t) := \min\Big\{ & \epsilon_{t+1} \rho_t + \sum_{i \in \N_{t+1}} \bar{p}_{t+1}^i \nu^i_t: \rho_t \geq 0, \\
        & \nu^i_t + \norm{\om^i_{t+1} - \om^j_{t+1}} \rho_t \geq (\alpha^{i}_t)^\top x_t + \beta^{i}_t, \ \forall i, j \in \N_{t+1} \Big\}. \label{eq:dra_reform_approx_cut}
\end{align} 
\end{subequations}

\begin{remark} \label{rem:dra_continuous}
The convex approximation described in this section can be applied to DRA-MSIPs having continuous supports by employing additional algorithmic techniques.
More specifically, the dual of the inner maximization in \eqref{eq:dra_model} and \eqref{eq:dra_bellman} with continuous support is given by the following semi-infinite program:
\begin{equation} \label{eq:dra_continuous_reform_approx}
\begin{split}
    \min_{\rho_t \geq 0} & \bigg\{ \epsilon_{t+1} \rho_t + \sum_{i \in \N_{t+1}} \bar{p}_{t+1}^i \nu^i_t: \\
    &\nu^i_t + \norm{\om^i_{t+1} - \om_{t+1}} \rho_t \geq Q^{RA}_{t+1}(x_t, \om_{t+1}), \ \forall i\in\N_{t+1}, \om_{t+1} \in \Om_{t+1}
    \bigg\}.
\end{split}
\end{equation}
Given $\om^j_{t+1} \in \Om_{t+1}$ and cuts $(\alpha^j_t, \beta^j_t)$ valid for $Q^{RA}_{t+1}(\cdot, \om^j_{t+1}),$ for some $j$, we obtain valid cutting planes in the form of \eqref{eq:dra_reform_approx_cut} for this semi-infinite program. These constraints provide an outer approximation of the feasible region of the semi-infinite program. Consequently, the optimal value within the outer approximation is a lower bound on the value of \eqref{eq:dra_continuous_reform_approx}. We iteratively improve the outer approximation through the following steps:
\textit{(i)} Solve the outer approximation of \eqref{eq:dra_continuous_reform_approx} with a finite number of cutting planes;
\textit{(ii)} Given its solution \((\bar{x}_t, \bar{\rho}_t)\), solve the separation problem \(\max_{\omega_{t+1} \in \Omega_{t+1}} \{ Q^{RA}_{t+1}(\bar{x}_t, \omega_{t+1}) - \|\omega^i_{t+1} - \omega_{t+1}\|\bar{\rho}_t \}\);
\textit{(iii)} Add a cutting plane in the form of \eqref{eq:dra_reform_approx_cut} associated with a scenario identified by solving the separation problem to the outer approximation of \eqref{eq:dra_continuous_reform_approx};
\textit{(iv)} Repeat until the outer approximation is as tight as a predetermined level.
These iterative steps refine the outer approximation, making it increasingly accurate in representing the feasible region of the semi-infinite program, and thereby solve the DRA-MSIP with continuous support.

\end{remark}

\section{SDDP-based Algorithms for DRR-MSP and DRO-MSP} \label{sec:da_sddp}

\subsection{Distributionally ambiguous SDDP}

We present a customized SDDP algorithm, referred to as \textit{distributionally ambiguous SDDP} (DA-SDDP), for both DRO-MSIPs and DRR-MSIPs, where the convex approximations discussed in Sects.~\ref{sec:approx_drr_msip} and \ref{sec:approx_dra_msip} are utilized. The DA-SDDP approximates the pessimistic and optimistic expected cost-to-go functions $\Q_{t+1}^{RA}$ and $\Q_{t+1}^{RR}$ for each $t \in [T-1]$. Notably, DA-SDDP shares the key sampling ideas of SDDP for approximating the expectation functions, but the crux of this algorithm is in deriving convex approximations and iterative refinement methods for them. Recall that SDDP solves the linear programs and use dual solutions to derive a valid cut (or approximations) for MSLPs. However, this is not applicable for DRR programs.

\renewcommand{\algorithmiccomment}[1]{\hfill $\triangleright$ #1}

\begin{algorithm}
\caption{Distributionally Ambiguous SDDP} \label{alg:da_sddp}
\begin{algorithmic}[1]
  \STATE{\textbf{Initialize} 
    $l \gets 1$; $x_0 \gets$ initial state; $\om_1 \gets$ data at the first stage; $\Om_1:=\{\om_1\}$; $K^l_t \gets 0$ for $t \in [T-1]$;
  } \\
  \WHILE{(satisfying none of stopping conditions)} \label{line:da_sddp_while} 
    \STATE{Sample a scenario path $\xi^l \in \Xi:=\Om_1 \times \dots \times \Om_T$} \label{line:da_sddp_sample}
    \FOR[Forward Step]{$t \in [T]$} \label{line:da_sddp_forward_for_t}
        \STATE{Solve subproblem $\Prob^l_t(x^l_{t-1}, \xi^l_t)$ and obtain $(x^{l}_t, y^{l}_t)$ and $\hat{Q}_t^l(x^{l}_{t-1}, \xi^l_t)$} \label{line:da_sddp_solve_subproblem}
    \ENDFOR
    \FOR[Backward Step]{$t=T,\dots,2$\label{line:da_sddp_backward_for_t}}
        \FOR{$i \in \N_t$ \label{line:da_sddp_backward_for_i}}
            \STATE{Solve relaxation $\tilde{\Prob}^l_t(x^l_{t-1}, \om_t^i)$
            and obtain cut $(\alpha_{t-1}^{i,l}, \beta_{t-1}^{i,l})$} \label{line:da_sddp_solve_relaxation}
        \ENDFOR
        \STATE{Refine approximating function $\phi^l_{t-1}$ by using cuts $(\alpha_{t-1}^{i,l}, \beta_{t-1}^{i,l}), i \in \N_{t}$} \label{line:da_sddp_refine_approx}
        \STATE{$K^l_{t-1} \gets K^l_{t-1} + 1$} \label{line:da_sddp_add_K}
    \ENDFOR
    \STATE{Solve subproblem $\Prob^l_1(x^l_{0}, \om_1)$ and obtain the bound $LB$} \label{line:da_sddp_compute_bound}
    \STATE{$K^{l+1}_t \gets K^l_t$ for $t =1, \dots, T-1; \quad l \gets l + 1$}
  \ENDWHILE
  \RETURN Subproblems $\{\Prob^l_t\}_{t \in [T-1]}, LB$ \label{line:da_sddp_return}
\end{algorithmic}
\end{algorithm}
A pseudocode of DA-SDDP is given in Algorithm~\ref{alg:da_sddp}. 
The algorithm is initialized with a predetermined initial state of the model, denoted by $x_0$, the input data for the first stage, denoted by $\om_1$, and the singleton set $\Om_1:=\{\om_1\}$, which are introduced to simplify the notation later. We also set the iteration counter $l$ to $1$, and the number of cuts $K^l_t$ for $t \in [T-1]$ at iteration $l$ to $0$.
At iteration $l$, the algorithm samples a scenario path $\xi^l = (\xi^l_1, \cdots, \xi^l_T)$ from $\Xi := \Om_1 \times \dots \times \Om_T$ (Line~\ref{line:da_sddp_sample}). For the finite convergence of the algorithm, we assume that the scenario path sampling is conducted with replacement. Note that this can be readily extended to the sampling of multiple scenario paths per iteration. The remainder of the iteration consists of a forward step (Lines~\ref{line:da_sddp_forward_for_t} and \ref{line:da_sddp_solve_subproblem}) and a backward step (Lines~\ref{line:da_sddp_backward_for_t}-\ref{line:da_sddp_compute_bound}).

\paragraph{Forward Step.} For each stage $t \in [T]$, DA-SDDP solves the following approximation of Problem~\eqref{eq:dra_bellman} (or \eqref{eq:drr_bellman}), which we refer to as \textit{subproblem} and denote by $\Prob^l_t(x^l_{t-1}, \xi^l_t)$ (Line~\ref{line:da_sddp_solve_subproblem}): 
\begin{align} \label{eq:subproblem} 
% \tag{$\Prob^l_t(x^l_{t-1}, \xi^l_t)$} 
\hat{Q}^l_t(x^l_{t-1}, \xi^l_t) :=
    \min_{(x_t, y_t) \in X_t(x^l_{t-1}, \xi^l_t)} \ & \Big\{f_t(x_t, y_t, \xi^l_t) + \phi^l_{t}(x_t)\Big\}, \quad t \in [T],
\end{align}

\noindent
where $\phi^l_{T}(\cdot) = 0$, and $x^l_{t-1}, t = 2, .., T,$ is an optimal stage-$(t-1)$ solution.  To simplify notation, we let $x^l_0:=x_0$ and $X_1(x^l_0, \xi^l_1) := X_1$. Function $\phi^l_t(x_t)$, for each $t \in [T-1]$, is a convex function that is constructed by $K^l_t$ cuts, and it serves as an under-approximation of the pessimistic and optimistic expected cost-to-go functions---$\Q^{RA}_{t+1}$ and $\Q^{RR}_{t+1}$---while solving DRO-MSIP and DRR-MSIP, respectively.
The approximating functions presented in Sect.~\ref{sec:approx_drr_msip} and \ref{sec:approx_dra_msip} are viable substitutes for the function $\phi^l_t$ when they are defined using the set of cuts $\{(\alpha^{i,k}_t, \beta^{i,k}_t)\}_{k \in [K^l_t]}$ available at iteration $l$ and stage $t$.

\paragraph{Backward Step.} 
For each $t = T, \dots, 2$, the algorithm solves relaxations of the subproblems, denoted by $\tilde{\Prob}_{t}^l(x^l_{t-1}, \om_t^i)$, and compute affine cuts $(\alpha_{t-1}^{i,l}, \beta_{t-1}^{i,l})$ for $i \in \N_t$ (Line~\ref{line:da_sddp_solve_relaxation}) using the information obtained by solving the relaxations.
These cuts provide a lower-bounding approximation of the value function $\hat{Q}^l_{t}$ such that
\begin{subequations}
\begin{align}
    \hat{Q}^l_{t}(x_{t-1}, \om_{t}^i) \geq (\alpha^{i,l}_{t-1})^\top x_{t-1} + \beta^{i,l}_{t-1}, \quad \forall x_{t-1} \in \{0, 1\}^{d_x}, i \in \N_{t}, \label{eq:cut_valid_plane} \\
    \hat{Q}^l_{t}(x^l_{t-1}, \om_{t}^i) = (\alpha^{i,l}_{t-1})^\top x^l_{t-1} + \beta^{i,l}_{t-1}. \label{eq:cut_supporting_plane}
\end{align}
\end{subequations}
It should be noted that the cut's validity is defined for $\hat{Q}^l_t$, yet this condition is sufficient to construct a lower-approximation for the optimistic or pessimistic expected cost-to-go function, since $\hat{Q}^l_t$ is always lower-bounding the exact value function $Q^{RR}_t$ or $Q^{RA}_t$, respectively.
Using the cuts $\{(\alpha_{t-1}^{i,l}, \beta_{t-1}^{i,l})\}_{i \in \N_t}$, the algorithm tightens the approximating function $\phi^l_{t-1}(x_{t-1})$ and improves the lower bound (Line~\ref{line:da_sddp_refine_approx}). In Line~\ref{line:da_sddp_compute_bound}, the algorithm computes the lower bound on the overall optimal objective value by solving the subproblem associated with the first stage.

The algorithm repeats these iterations with the forward and backward steps until one of predetermined stopping conditions is satisfied. These conditions can include a maximum number of iterations, a limit on elapsed time, or convergence of the lower bound.

There are various ways of generating a cut $(\alpha^{i,l}_{t-1}, \beta^{i,l}_{t-1}), i \in \N_t,$ which is a supporting hyperplane of the epigraph of $\hat{Q}^l_t(x_{t-1}, \om^i_t)$, intersecting at $x_{t-1}=x^l_{t-1}$, i.e., a cut satisfying both \eqref{eq:cut_valid_plane} and \eqref{eq:cut_supporting_plane}.
For example, by solving the subproblem to optimality, we can obtain an integer optimality cut, given a lower bound $L$ for the value function $\hat{Q}^l_t$, in the following form:
\begin{align*}
    \hat{Q}^l_{t}(x_{t-1}, \om_{t}^i) \geq \big( \hat{q}^l_t - L \big) \bigg(\sum_{i \in [d_x]} 2x^l_{t-1,i} x_{t-1,i} - x_{t-1,i} - x^l_{t-1,i}\bigg) + \hat{q}^l_t,
\end{align*}
where $\hat{q}^l_t := \hat{Q}^l_t(x^l_{t-1}, \om^i_t)$. As another example, consider a Benders cut obtained by solving a linear programming relaxation of the subproblem when solving a DRO-MSIP (or DRR-MSIP). This cut satisfies \eqref{eq:cut_valid_plane}, though not necessarily \eqref{eq:cut_supporting_plane}. However, we can derive a mixed-binary linear programming reformulation of the subproblem by adding binary variables replacing integer variables, use the hierarchy of relaxations (discussed in Sect.~\ref{sec:hierarchy_relaxations}) to solve the reformulation to optimality, and obtain a Benders cut that satisfies both \eqref{eq:cut_valid_plane} and \eqref{eq:cut_supporting_plane}.

\subsection{Finite Convergence}

Now, we show the finite convergence of DA-SDDP equipped with the convex approximations presented earlier. For DRR-MSIPs, we use DRR-SDDP-C and DRR-SDDP-R to denote variants of DA-SDDP with the cutting plane-based approximation $\phi^C_t$~\eqref{eq:drr_decomp} and the reformulation-based approximation $\phi^R_t$~\eqref{eq:drr_reform_approx}, respectively. We define a \textit{policy} by a collection of functions $\{\bar{x}_t(\xi_{[t]}),\bar{y}_t(\xi_{[t]})\}_{t \in [T]}$, where $\xi_{[t]} = (\xi_1, \dots, \xi_t)$, which serves as a decision rule given any scenario path $(\xi_1, \dots, \xi_T)$. A policy is \textit{optimal} for a DRR-MSIP if $(\bar{x}_t(\xi_{[t]}), \bar{y}_t(\xi_{[t]}))$ is optimal to the $t$-th stage problem~\eqref{eq:drr_bellman} (\eqref{eq:drr_model} for $t=1$) for $t \in [T]$ and all $\xi \in \Xi$.

\begin{theorem} \label{thm:finite_convergence_drr_cutting_plane}
    The forward step of the DRR-SDDP-C algorithm defines an optimal policy for a DRR-MSIP in a finite number of iterations of its while loop with probability one.
    Furthermore, each iteration of the while loop is executed in a finite time if there exists a finite-time algorithm for computing the cut coefficients~\eqref{eq:drr_decomp_coefficients}.
\end{theorem}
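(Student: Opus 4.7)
}
My plan is to mirror the convergence argument of SDDiP~\cite{Zou}, adapted to accommodate the optimistic inner problem and the tightness-preserving property of the cuts $\phi^C_t$. First, I would fix notation: by Assumption~\ref{amp:integrality} and Remark~\ref{rem:binary_expansion} every state lies in the finite set $\{0,1\}^{d_x}$, and by Assumption~\ref{amp:finite_support} every scenario path lives in the finite set $\Xi$. Hence the collection of reachable pairs $(x_{t-1}, \omega_t^i)$ at each stage is finite. Since Line~\ref{line:da_sddp_sample} draws $\xi^l$ with replacement from $\Xi$, a standard Borel--Cantelli argument gives that, with probability one, each scenario path in $\Xi$ is sampled infinitely often.

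The core of the proof is a backward induction on $t = T, T{-}1, \ldots, 1$ showing that, almost surely, after finitely many iterations the approximating function $\phi^l_t$ agrees with $\mathcal{Q}^{RR}_{t+1}$ at every state $x_t$ that is visited by the forward step on any sampled path. The base case $t=T$ is immediate since $\phi^l_T \equiv 0 = \mathcal{Q}^{RR}_{T+1}$. For the induction step, I would argue as follows. Suppose $\phi^l_t$ is already tight at all visited stage-$t$ states after some iteration $l_0$. By Assumption~\ref{amp:feasible_region} the subproblems $\mathcal{P}^l_{t}(\cdot, \omega_t^i)$ are feasible and bounded MILPs, and with an exact (e.g., integer optimality or strengthened Benders) cut one can generate coefficients $(\alpha^{i,l}_{t-1}, \beta^{i,l}_{t-1})$ that satisfy both validity~\eqref{eq:cut_valid_plane} and tightness~\eqref{eq:cut_supporting_plane} at the incumbent $x^l_{t-1}$. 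Invoking the tightness-preservation property noted immediately after Theorem~\ref{thm:drr_cutting_plane_approx}, once these scenario-wise cuts are tight at $x^l_{t-1}$ for every $i \in \N_t$, the aggregated cut~\eqref{eq:drr_decomp}--\eqref{eq:drr_decomp_coefficients} is tight for $\mathcal{Q}^{RR}_{t}$ at $x^l_{t-1}$; combined with Theorem~\ref{thm:drr_cutting_plane_approx}, the refined $\phi^l_{t-1}$ matches $\mathcal{Q}^{RR}_{t}$ at $x^l_{t-1}$. Because only finitely many states $x_{t-1}$ are reachable and each is revisited infinitely often, after finitely many iterations $\phi^l_{t-1}$ is tight at every reachable stage-$(t{-}1)$ state.

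Once the inductive claim holds for all stages, the subproblem~\eqref{eq:subproblem} coincides with the exact Bellman recursion~\eqref{eq:drr_bellman} on every sampled path, so the forward-step decisions $\{(x^l_t, y^l_t)\}$ are optimal. Extending this to an optimal \emph{policy} for every $\xi \in \Xi$ uses again that every path is sampled almost surely, together with the stage-wise independence of Assumption~\ref{amp:stagewise_ind}, which ensures that tight cuts generated along one path remain valid when the same state is revisited on another. The second assertion is easier: each iteration solves $O(T \cdot \max_t N_t)$ MILPs of fixed size (finite by Assumption~\ref{amp:feasible_region}, \ref{amp:linear_objective_function} and the exact-cut assumption), plus it computes the coefficients~\eqref{eq:drr_decomp_coefficients} via the hypothesized finite-time oracle over $\mathcal{P}_{t+1}$; summing finitely many finite quantities yields a finite per-iteration runtime.

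The principal obstacle I anticipate is the induction step: because the DRR problem is nonconvex in the probability variables, one cannot rely on LP duality to transfer tightness from scenario-wise cuts to the aggregated optimistic approximation, so the argument must go through the explicit min/max decomposition in~\eqref{eq:drr_decomp_coefficients} and the tightness-preservation property of $\phi^C_t$. Care is also needed to ensure that the chosen cut family (integer optimality cuts or strengthened Benders cuts on a binary reformulation of the subproblem) simultaneously satisfies~\eqref{eq:cut_valid_plane} and~\eqref{eq:cut_supporting_plane}; this is where the hypothesis that the scenario cut-generation procedure terminates in finite time becomes indispensable.
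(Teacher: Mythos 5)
Your proposal is correct and follows essentially the same route as the paper's proof: a backward induction over stages showing that, with probability one, after finitely many iterations the approximation $\phi^{l,C}_t$ becomes tight at every state visited by the forward pass, using the finiteness of the binary state space, the positive sampling probability of each scenario path, the validity/tightness conditions \eqref{eq:cut_valid_plane}--\eqref{eq:cut_supporting_plane}, and the tightness-preservation property of the aggregated cut \eqref{eq:drr_decomp}. The per-iteration finiteness argument also matches the paper's (bounded MILP subproblems plus the assumed finite-time oracle for the coefficients \eqref{eq:drr_decomp_coefficients}).
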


\begin{remark}
When the ambiguity sets are singletons, the finite convergence result presented in Theorem~\ref{thm:finite_convergence_drr_cutting_plane} applies to MSIPs, where the finite convergence result previously established by Zou et al.~\cite{Zou}. While their proof utilizes assumptions regarding the validity, tightness, and finiteness of cuts, the proof in this paper with binary state variables demonstrates that the convergence holds even without the finiteness assumption on cuts.
\end{remark}
\begin{theorem} \label{thm:finite_convergence_drr_reform}
    The forward step of the DRR-SDDP-R algorithm defines an optimal policy to a DRR-MSIP in a finite number of iterations of its while loop with probability one.
    Furthermore, each iteration of the while loop is executed in a finite time if the ambiguity set at every stage is defined by a polytope or a mixed-binary linear set.
\end{theorem}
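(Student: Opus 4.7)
The plan is to mirror the structure used for Theorem~\ref{thm:finite_convergence_drr_cutting_plane}, leveraging the observation already made in Sect.~\ref{sec:drr_sddp_r} that $\phi_t^{R}(x_t)=\phi_t^{B}(x_t)$ whenever $x_t\in\{0,1\}^{d_x}$. The McCormick envelope defined by~\eqref{eq:reform_inequalities_b} is exact at binary values of $x_t$, so the linearization variables satisfy $\eta_t^i = p^i_{t+1}x_t$ at any feasible binary realization, and the auxiliary variables $\bar{\theta}_t^i$ recover $p_{t+1}^i\theta_t^i$ via the unrestricted direction. Consequently, every cut $(\alpha_t^{i,k},\beta_t^{i,k})$ added in Line~\ref{line:da_sddp_refine_approx} enters $\phi_t^{R}$ with the same effect it would have in $\phi_t^{B}$, and hence produces a valid lower bound for $\Q^{RR}_{t+1}(x_t)$ that coincides with $\Q^{RR}_{t+1}(\hat{x}_t)$ at any sampled state $\hat{x}_t$ for which the underlying scenario-wise cuts are tight.

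First I would formalize this equivalence as a one-line lemma: for any collection of affine cuts $\{(\alpha_t^{i,k},\beta_t^{i,k})\}_{i,k}$ and any $x_t\in\{0,1\}^{d_x}$, $\phi_t^{R}(x_t)=\phi_t^{B}(x_t)$. Then I would invoke the validity and tightness arguments already developed for DRR-SDDP-C to conclude that after finitely many iterations, with probability one, the approximating function $\phi_t^{R}$ equals $\Q^{RR}_{t+1}$ on every state reachable along an optimal forward policy. The key ingredients here are: (i) binary state variables (Assumption~\ref{amp:integrality}) bound the set of reachable states by $2^{d_x}$ per stage; (ii) sampling with replacement ensures each of the finitely many scenario paths in $\Xi$ is drawn infinitely often almost surely, so every reachable $(x_{t-1},\om_t^i)$ pair eventually appears in the backward step; (iii) the cut-generation routine in Line~\ref{line:da_sddp_solve_relaxation} yields scenario-wise cuts that are simultaneously valid and tight at $(x^l_{t-1},\om_t^i)$. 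Aggregated through the equivalent $\phi_t^{B}$ lower-approximation, these properties force finite stabilization of $\phi_t^{R}$ on the reachable state set, and hence an optimal policy in the forward step after finitely many iterations.

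For the second claim, I would verify that each iteration terminates in finite time. The dominant work per iteration lies in solving the subproblems~\eqref{eq:subproblem} with $\phi_t^{R}$ substituted for the approximating function, and in generating cuts via relaxations of these subproblems. When $\P_{t+1}$ is a polytope, problem~\eqref{eq:drr_reform_approx} is a linear program in the variables $(P_{t+1},\bar{\theta},\eta_t)$ augmented with the outer binary state $x_t$ from the subproblem; coupled with the mixed-integer linear feasibility set $X_t(\cdot,\cdot)$ and the linear objective (Assumption~\ref{amp:linear_objective_function}), the subproblem is a mixed-binary linear program, solvable in finite time. When $\P_{t+1}$ is a mixed-binary linear set, the same argument applies after absorbing its binary variables into the subproblem, which remains a mixed-binary linear program. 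Cut generation in Line~\ref{line:da_sddp_solve_relaxation} follows the same finite-time routine described after Algorithm~\ref{alg:da_sddp} (either an integer optimality cut from solving the subproblem, or a Benders cut obtained via the hierarchy of relaxations of Sect.~\ref{sec:extensions_da_msdp}).

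The main obstacle I expect is the tightness argument transferred through the McCormick linearization: one must verify that although $\phi_t^{R}$ is built from the convex hull of bilinear terms, its values on the binary lattice agree with those of $\phi_t^{B}$, and hence inherit the tightness property that drives the induction across stages. Once this equivalence is pinned down, the remainder of the convergence proof reduces to the argument already carried out for DRR-SDDP-C, with the finite-time-per-iteration statement following from standard tractability of (mixed-binary) linear programs.
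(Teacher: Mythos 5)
Your proposal is correct and follows essentially the same route as the paper, which proves this theorem by deferring to the argument for Theorem~\ref{thm:finite_convergence_drr_cutting_plane}; the equivalence $\phi^R_t=\phi^B_t$ on binary states that you isolate as a lemma is exactly the fact the paper records in Sect.~\ref{sec:drr_sddp_r} and implicitly relies on to transfer the validity/tightness induction across stages. Your finite-time-per-iteration argument (the subproblem becomes a mixed-binary linear program when the ambiguity set is a polytope or mixed-binary linear set) is likewise the intended justification.
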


Similarly, for DRO-MSIPs, we refer to the variants of DA-SDDP as DRO-SDDP-C and DRO-SDDP-R, utilizing the cutting plane-based approximation $\phi^S_t$~\eqref{eq:dra_sep_approx} and the reformulation-based approximation $\phi^D_t$~\eqref{eq:dra_reform_approx}, respectively.
\begin{theorem} \label{thm:finite_convergence_dra}
    The forward step of the DRO-SDDP-C algorithm provides an optimal policy for DRO-MSIP in a finite number of iterations of its while loop with probability one.
    Furthermore, each iteration of the while loop is executed in a finite time if there exists a finite-time algorithm for solving the distribution separation problem~\eqref{eq:dra_dsp}.
\end{theorem}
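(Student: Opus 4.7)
The plan is to establish finite convergence of DRO-SDDP-C by mirroring the argument of Theorem~\ref{thm:finite_convergence_drr_cutting_plane} for DRR-SDDP-C, while exploiting the additional convex structure that the outer maximization in \eqref{eq:dra_bellman} grants to $\Q^{RA}_{t+1}$. First I would verify that every cut of the form \eqref{eq:dra_sep_approx} added in Line~\ref{line:da_sddp_refine_approx} is both valid for $\Q^{RA}_{t+1}$ and tight at the incumbent state $x^l_t$. Validity follows from the chain
\begin{equation*}
\Q^{RA}_{t+1}(x_t) \;\geq\; \sum_{i \in \N_{t+1}} \hat{p}^{\,i}_{t+1}\,Q^{RA}_{t+1}(x_t,\om^i_{t+1}) \;\geq\; \sum_{i \in \N_{t+1}} \hat{p}^{\,i}_{t+1}\,\big((\alpha^{i}_t)^\top x_t + \beta^{i}_t\big) \;=\; \pi_t^\top x_t + \gamma_t,
\end{equation*}
which holds for every $x_t \in \{0,1\}^{d_x}$ whenever $(\alpha^{i}_t,\beta^{i}_t)$ is a valid underestimator of $Q^{RA}_{t+1}(\cdot,\om^i_{t+1})$ and $\hat{P}_{t+1} \in \P_{t+1}$. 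Tightness at $x^l_t$ further requires that each $(\alpha^{i}_t,\beta^{i}_t)$ be tight at $x^l_t$; under this hypothesis, the fact that $\hat{P}_{t+1}$ is an optimizer of the distribution separation problem \eqref{eq:dra_dsp} at $x^l_t$ forces $\pi_t^\top x^l_t + \gamma_t = \max_{P_{t+1} \in \P_{t+1}} \sum_i p^i_{t+1} Q^{RA}_{t+1}(x^l_t,\om^i_{t+1}) = \Q^{RA}_{t+1}(x^l_t)$.

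Next I would perform a backward induction on $t$. Since $\phi^l_T \equiv 0$, subproblem $\Prob^l_T$ reproduces $Q^{RA}_T$ exactly, so the relaxations $\tilde{\Prob}^l_T(x^l_{T-1},\om^i_T)$ generate cuts $(\alpha^{i,l}_{T-1},\beta^{i,l}_{T-1})$ satisfying both \eqref{eq:cut_valid_plane} and \eqref{eq:cut_supporting_plane}; by the previous paragraph these induce a $\phi^S_{T-1}$ cut that is valid for $\Q^{RA}_T$ and tight at $x^l_{T-1}$. Inductively, once $\phi^l_t$ coincides with $\Q^{RA}_{t+1}$ at every state $x^l_t$ already visited along some sampled trajectory, the stage-$t$ subproblem value matches $Q^{RA}_t$ at those states, so cut-generation in Line~\ref{line:da_sddp_solve_relaxation} yields valid-and-tight affine underestimators of $Q^{RA}_t(\cdot,\om^i_t)$ at $x^l_{t-1}$, which in turn produce tight $\phi^S_{t-1}$ cuts. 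The almost-sure finite termination then follows from standard SDDP arguments: Assumption~\ref{amp:integrality} confines the state space to the finite set $\{0,1\}^{d_x}$ and Assumption~\ref{amp:finite_support} makes $\Xi$ finite, so because Line~\ref{line:da_sddp_sample} samples with replacement, every $\xi \in \Xi$ is drawn infinitely often with probability one. Since each new tight cut at a (state, scenario) pair can be generated at most finitely many times before the approximation becomes exact on that pair, $\phi^l_t$ stabilizes at $\Q^{RA}_{t+1}$ on the reachable state set after finitely many iterations, certifying optimality of the forward policy.

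Finally, per-iteration finiteness holds because each subproblem $\Prob^l_t$ is a mixed-integer linear program over a nonempty compact feasible region (Assumptions~\ref{amp:feasible_region} and \ref{amp:linear_objective_function}), and the only nontrivial work beyond solving such MILPs in Line~\ref{line:da_sddp_solve_relaxation} is evaluating $(\pi_t,\gamma_t)$, which reduces to a single call to the distribution separation problem \eqref{eq:dra_dsp}---solvable in finite time by hypothesis. The main obstacle I anticipate is the inductive bookkeeping linking tightness of stage-$(t+1)$ cuts to tightness of the aggregated cut at stage $t$: although $\phi^l_t$ is the pointwise maximum of all cuts accumulated up to iteration $l$ rather than a single cut, validity is preserved under that maximum, and tightness at a previously sampled state is preserved by the cut originally generated at that state, so the induction closes without requiring any single cut to dominate its predecessors.
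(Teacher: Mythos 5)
Your proposal is correct and follows essentially the same route as the paper: the paper's proof of this theorem simply reduces to the argument of Theorem~\ref{thm:finite_convergence_drr_cutting_plane}, i.e., validity and tightness of the aggregated cut $\phi^S_t$ at the incumbent state, backward induction over stages combined with finiteness of $\{0,1\}^{d_x}$ and positive sampling probability of each $\xi\in\Xi$, and per-iteration finiteness from the compact MILP subproblems plus the finite-time separation oracle. Your additional care about tightness being inherited from the cut generated at the originally sampled state (rather than requiring any single cut to dominate) matches the role of conditions \eqref{eq:cut_valid_plane}--\eqref{eq:cut_supporting_plane} in the paper's argument.
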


\section{Extensions to Multistage Stochastic Disjunctive Programs with Distributional Ambiguity} \label{sec:extensions_da_msdp}

In this section, we present extensions of the DA-SDDP algorithms to DRR- and DRO-MSDPs defined in Section \ref{sec:ProbForm}, under the following assumption: For $h\in H_t$, set $\{(x_t, y_t) \in \R^{d_x}_+ \times \R^{d_y}_+: A^h_t(\om_t) x_t + B^h_t(\om_t) y_t \geq b^h_t(\om_t) - C^h_t(\om_t) x_{t-1}\}$ is nonempty and compact for any $x_{t-1} \in \{0,1\}^{d_x}$ and $\om_t \in \Om_t$.
Also, its constraints include $x_t \geq 0$ and $x_t \leq 1$.

\subsection{DA-SDDP algorithms for DRR- and DRO-MSDPs}

Let us consider a set of cuts, $\{(\pi^k_t, \gamma^k_t)\}_{k \in [K^l_t]}$, for iteration $l$ and stage $t$, constructing an approximation of the pessimistic or optimistic expected cost-to-go function.
Then, with the same definition of $x_0$ and $\om_1$ as in Algorithm~\ref{alg:da_sddp}, the subproblem at iteration $l$ is given by
\begin{align} \label{eq:da_msdp_sub}
\fontsize{11}{13.2}
\begin{split}
    \hat{Q}^l_t(x_{t-1}, \om_t) = 
    \min_{(x_t, y_t) \in X_t(x_{t-1}, \om_t)} \Big\{
        f_t(x_t, y_t, \om_t) + \phi_t : \\
        \phi_t \geq (\pi^k_t)^\top x_t + \gamma_{t}^k, \ \ k \in [K^l_t]
    \Big\}
\end{split}
\end{align}
for $t\in[T]$ and $\om_t \in \Om_t$, where $\phi_T=K^l_T = 0$ and $X_t(x_{t-1},\om_t)$ is defined by a disjunctive set \eqref{eq:disjunctive_set}.
We define the feasible set of the foregoing subproblem by
\begin{multline}
    \D^l_t(x_{t-1}, \om_t) := \Big\{
    (x_t, y_t, \phi_t) \in \R_+^{d_x} \times \R_+^{d_y} \times \R_+ : \\
    \bigvee_{h \in H_t} \Big(\phi_t - (\pi^{k}_t)^\top x_t \geq \gamma^{k}_t, \ k \in [K^l_t],\ \\
    A^h_t(\om_t) x_t + B^h_t(\om_t) y_t \geq b^h_t(\om_t) - C^h_t(\om_t) x_{t-1} \Big)
    \Big\},
\end{multline}
and derive the convex hull of $\D^l_t(x_{t-1}, \om_t)$ in the following proposition.

\begin{proposition} \label{prop:convex_hull_D}
    For any $x_{t-1} \in \{0,1\}^{d_x}$ and $\om_t \in \Om_t$, the convex hull of the set $\D^l_t(x_{t-1}, \om_t), t \in [T],$ is equivalent to the projection of polyhedral set $\tilde{\D}^{l}_t(x_{t-1}, \om_t)$ onto the $(x_t, y_t, \phi_t)$-space where $\tilde{\D}^{l}_t(x_{t-1}, \om_t)$ is given by
    \begin{equation*} 
    \fontsize{11}{13.2}
    \begin{split}
    \Bigg\{
        & \sum_{h\in H_t} \zeta^h_{t,0} = 1,
        \sum_{h\in H_t} \zeta^h_{t,1} - x_{t} = 0, \\
        & \sum_{h\in H_t} \zeta^h_{t,2} - y_t = 0,
        \sum_{h\in H_t} \zeta^h_{t,3} = x_{t-1}, \sum_{h\in H_t} \zeta^h_{t,4} - \phi_t = 0,
    \end{split}
    \end{equation*}
    \begin{equation*} 
    \fontsize{11}{13.2}
    \begin{split}
        & A^h_t(\om_t) \zeta^h_{t,1} + B^h_t(\om_t) \zeta^h_{t,2} + C^h_t(\om_t) \zeta^h_{t,3} - b^h_t(\om_t) \zeta^h_{t, 0} \geq 0, \quad h \in H_t, \\
        & \zeta^h_{t,4} - (\pi^k_t)^\top \zeta^h_{t,1} - \gamma^k_t \zeta^h_{t, 0} \geq 0, \quad h \in H_t, k \in K^l_t, \\
        & x_t \in \R^{d_x}_+, y_t \in \R^{d_y}_+, \phi_t \in \R_+, \\
        &\zeta^h_{t,0} \in \R_+, \zeta^h_{t,1} \in \R^{d_x}_+, \zeta^h_{t,2} \in \R^{d_y}_+, \zeta^h_{t, 3} \in \R^{d_x}_+, \zeta^h_{t, 4} \in \R_+, \ h \in H_t
        \Bigg\}.
    \end{split}
    \end{equation*}
\end{proposition}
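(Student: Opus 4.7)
The plan is to apply Balas' classical disjunctive programming theorem to characterize the convex hull of the union of polyhedra that defines $\D^l_t(x_{t-1}, \om_t)$, and then match the resulting extended formulation with the one stated in the proposition.

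First, for each $h \in H_t$, I would define
\begin{equation*}
P^h := \Big\{(x_t, y_t, \phi_t) \in \R^{d_x}_+ \times \R^{d_y}_+ \times \R_+ :\ A^h_t(\om_t) x_t + B^h_t(\om_t) y_t \geq b^h_t(\om_t) - C^h_t(\om_t) x_{t-1},\ \phi_t - (\pi^k_t)^\top x_t \geq \gamma^k_t,\ k \in [K^l_t]\Big\},
\end{equation*}
so that $\D^l_t(x_{t-1}, \om_t) = \bigcup_{h \in H_t} P^h$ by construction. I would then verify the hypotheses required for Balas' theorem. By the standing assumption of Sect.~\ref{sec:extensions_da_msdp}, for any $x_{t-1} \in \{0,1\}^{d_x}$ the $(x_t, y_t)$-projection of each $P^h$ is nonempty and compact; extending any such $(x_t, y_t)$ by a $\phi_t$ satisfying $\phi_t \geq \max_{k \in [K^l_t]} \{(\pi^k_t)^\top x_t + \gamma^k_t\}$ yields a point of $P^h$, so $P^h \neq \emptyset$. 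Because the $(x_t, y_t)$-projection is bounded and $\phi_t$ appears only through ``$\geq$'' inequalities, every $P^h$ has the common recession cone $\{0\}^{d_x} \times \{0\}^{d_y} \times \R_+$, which ensures that $\mathrm{conv}(\bigcup_h P^h)$ is closed.

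Next I would invoke Balas' disjunctive programming theorem: writing each $P^h$ in the form $\{u \geq 0 : D^h u \geq e^h\}$ with $u = (x_t, y_t, \phi_t)$, the theorem yields
\begin{equation*}
\mathrm{conv}\Big(\bigcup_{h \in H_t} P^h\Big) = \mathrm{proj}_u \Big\{(u, \lambda, \{v^h\}) : u = \sum_h v^h,\ \sum_h \lambda_h = 1,\ D^h v^h \geq \lambda_h e^h,\ v^h \geq 0,\ \lambda_h \geq 0\Big\}.
\end{equation*}
I would match this with $\tilde{\D}^l_t(x_{t-1}, \om_t)$ by identifying $\zeta^h_{t,0} = \lambda_h$ and the subvectors of $v^h$ corresponding to $x_t, y_t, \phi_t$ with $\zeta^h_{t,1}, \zeta^h_{t,2}, \zeta^h_{t,4}$, respectively. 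The auxiliary variable $\zeta^h_{t,3}$ plays the role of the scaled parameter $\lambda_h x_{t-1}$: setting $\zeta^h_{t,3} = \lambda_h x_{t-1}$ for each $h$ reproduces the Balas inequality $A^h_t v^h_{x_t} + B^h_t v^h_{y_t} \geq \lambda_h(b^h_t - C^h_t x_{t-1})$ as the block constraint $A^h_t \zeta^h_{t,1} + B^h_t \zeta^h_{t,2} + C^h_t \zeta^h_{t,3} - b^h_t \zeta^h_{t,0} \geq 0$ and automatically satisfies the aggregation $\sum_h \zeta^h_{t,3} = x_{t-1}$; the cut inequalities $\zeta^h_{t,4} - (\pi^k_t)^\top \zeta^h_{t,1} - \gamma^k_t \zeta^h_{t,0} \geq 0$ are the $k$-th cut constraints scaled by $\lambda_h$ in exactly the same way.

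The main obstacle I anticipate is justifying that replacing the pointwise identities $\zeta^h_{t,3} = \lambda_h x_{t-1}$ by the weaker aggregation $\sum_h \zeta^h_{t,3} = x_{t-1}$ with $\zeta^h_{t,3} \geq 0$ does not enlarge the projection onto the $(x_t, y_t, \phi_t)$-space. I would handle this by viewing $\tilde{\D}^l_t(x_{t-1}, \om_t)$ as Balas' construction in the lifted space that treats $x_{t-1}$ as an auxiliary variable in $[0,1]^{d_x}$, followed by slicing at the given parameter value. Since $x_{t-1} \in \{0,1\}^{d_x}$ is an extreme point of $[0,1]^{d_x}$, any decomposition of a point of the lifted convex hull whose $x_{t-1}$-component equals $x_{t-1}$ must use only lifted atoms whose own $x_{t-1}$-components already equal $x_{t-1}$; hence slicing and convex-hull-taking commute at binary $x_{t-1}$, and the resulting projection onto $(x_t, y_t, \phi_t)$ coincides with $\mathrm{conv}(\bigcup_h P^h) = \mathrm{conv}(\D^l_t(x_{t-1}, \om_t))$, which establishes the proposition.
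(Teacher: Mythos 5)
Your proposal is correct and follows essentially the same route as the paper's proof: both reduce the statement to Balas' tight extended formulation for a union of polyhedra, handle the aggregated constraint $\sum_{h}\zeta^h_{t,3}=x_{t-1}$ by the observation that slicing at a binary (extreme) point of the unit cube commutes with taking convex hulls, and justify closedness of the convex hull via the common recession cone of the nonempty disjuncts. The only cosmetic difference is the order of operations—you apply Balas to the parametric union first and then lift $x_{t-1}$, whereas the paper lifts $x_{t-1}$ into the variable space first and then applies Balas—but the key lemmas and the substance of the argument coincide.
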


Using Proposition~\ref{prop:convex_hull_D}, we also derive an extension of DA-SDDP for DRR- and DRO-MSDPs, namely DA-SDDP-DP. Its pseudocode is provided in Appendix~\ref{apx:da_sddp_dp_algorithm}. 
DA-SDDP-DP shares a similar structure to DA-SDDP, but note that it involves distinct subproblems and a special subroutine for adding cuts to the subproblems.
In particular, it solves the linear programming equivalents of subproblems~\eqref{eq:da_msdp_sub}, derived using Proposition~\ref{prop:convex_hull_D} and referred to as \textit{LP-subproblems}:
\begin{equation} \label{eq:da_msdp_sub_lp} 
    \min \Big\{ 
        f_t(x_t, y_t, \om_t) + \phi_t : 
            (x_t, y_t, \phi_t) \in \proj_{x_t, y_t, \phi_t}\big(\tilde{\D}^{l}_t(x_{t-1}, \om_t)\big) 
    \Big\},
\end{equation}
for $t \in [T]$ and $\phi_T=0$. We note that the DA-SDDP-DP algorithms for DRR- and DRO-MSDPs have the finite convergence if cuts $(\pi^l_{t-1}, \gamma^l_{t-1})$ are obtained as in the cutting plane-based algorithms for DRR- and DRO-MSIPs, respectively. 
The comprehensive description of the algorithm is provided in Appendix~\ref{apx:da_sddp_dp_algorithm}.

\subsection{Application of \texorpdfstring{Proposition~\ref{prop:convex_hull_D}}{Proposition 1} for solving DRR- and DRO-MSIPs using Hierarchical Relaxations} \label{sec:hierarchy_relaxations}

In this section, we present a hierarchy of relaxations ranging from linear relaxation to tight extended formulations for each stage to solve DRR- and DRO-MSIPs by applying Proposition~\ref{prop:convex_hull_D}. For the ease of exposition, let $y_t$ be continuous.
Then, the subproblem~\eqref{eq:subproblem} of DRR- and DRO-MSIPs for iteration $l$ and stage $t$ can be rewritten as
\begin{multline} \label{eq:subproblem_msip_rewrite_dp}
    \min \bigg\{f_t(x_t, y_t, \xi^l_t) + \phi^l_t:
    \Big(x_{t,j}=0 \vee x_{t,j}=1, j = 1, \dots, d_x\Big)  \\
    \bigwedge \Big(\phi_t - (\pi^k_t)^\top x_t \geq \gamma^k_t, \ k \in [K^l_t], \\
    A_t(\xi^l_t) x_t + B_t(\xi^l_t) y_t \geq b_t(\xi^l_t) - C_t(\xi^l_t) x_{t-1} \Big) \bigg\}.
\end{multline}
We use $\D^l_t(x_{t-1}, \xi^l_t)$ and $\D^{l,LP}_t(x_{t-1}, \xi^l_t)$ to denote the feasible region of the subproblem~\eqref{eq:subproblem_msip_rewrite_dp} and its linear programming relaxation, respectively. A relaxation of $\D^{l}_t(x_{t-1}, \xi^l_t)$ can be defined as follows:
$
    \D^{l,s}_t(x_{t-1}, \xi^l_t) := \D^{l,LP}_t(x_{t-1}, \xi^l_t) \cap \{x_{t,j} = 0 \vee x_{t,j} = 1, \ j \in [s]\}, \ \text{for } s \in [d_x].
$
It is easy to see that the set $\D^{l,s}_t(\cdot, \cdot)$ for $s=d_x$ is equivalent to the original set $\D^{l}_t(\cdot, \cdot)$. Moreover, $\D^{l,LP}_t(\cdot, \cdot) \supseteq \D^{l,1}_t(\cdot, \cdot) \supseteq \dots \supseteq \D^{l,d_x}_t(\cdot, \cdot) = \D^{l}_t(\cdot, \cdot),$
and thus $\conv(\D^{l,1}_t(\cdot, \cdot)) \supseteq \dots \supseteq \conv(\D^{l,d_x}_t(\cdot, \cdot)) = \conv(\D^{l}_t(\cdot, \cdot))$. This provides a hierarchy of relaxations of the feasible region $\D^{l}_t(\cdot, \cdot)$ of DRR- and DRO-MSIPs. The tight extended formulation of the convex hull of the relaxations can be obtained using Proposition~\ref{prop:convex_hull_D}.

\begin{proposition} \label{prop:hierarchy_relax}
    The convex hull of the set $\D^{l,s}_t(x_{t-1}, \om_t)$ is the projection of the following set onto the $(x_t,y_t,\phi_t)$-space for any $x_{t-1} \in \{0,1\}^{d_x}$ and $\om_t \in \Om_t$:
    \begin{equation} \label{eq:tight_form_hierarchy_relaxation}
    \fontsize{11}{13.2}
    \begin{split}
        \bigg\{
        & \sum_{h \in [|\J^s_t|]} \zeta^h_{t,0} = 1, \sum_{h \in [|\J^s_t|]} \zeta^h_{t,1} - x_t = 0, \\
        & \sum_{h \in [|\J^s_t|]} \zeta^h_{t,2} - y_t = 0, \sum_{h \in [|\J^s_t|]} \zeta^h_{t,3} = x_{t-1}, \sum_{h \in [|\J^s_t|]} \zeta^h_{t,4} - \phi_t = 0, \\
        & A_t(\om_t) \zeta^h_{t,1} + B_t(\om_t) \zeta^h_{t,2} + C_t(\om_t) \zeta^h_{t,3} \geq b_t(\om_t), \quad h \in [|\J^s_t|], \\
        & \zeta^h_{t,1,j} = 0, \quad j \in J^h_1,\ h \in [|\J^s_t|], \\
        & \zeta^h_{t,1,j} = \zeta^h_{t,0}, \quad j \in J^h_2, \ h \in [|\J^s_t|], \\
        & \zeta^h_{t,4} - (\pi^k_t)^\top \zeta^h_{t,1} - \gamma^k_t\zeta^h_{t,0} \geq 0, \quad h \in [|\J^s_t|], k \in [K^l_t], \\
        & x_t \in \R^{d_x}_+, y_t \in \R^{d_y}_+, \phi_t \in \R_+, \\
        & \zeta^h_{t,0} \in \R_+,\zeta^h_{t,1} \in \R^{d_x}_+,\zeta^h_{t,2} \in \R^{d_y}_+,\zeta^h_{t,3}\in \R^{d_x}_+,\zeta^h_{t,4}\in \R_+,\ h \in [|\J^s_t|]
        \bigg\},
    \end{split}
    \end{equation}
    where
    $\J^s_t:= \{(J^h_1, J^h_2) : h \in [|\J^s_t|]\}, s \in [d_x], t \in [T],$ be a set of all pairs of disjoint sets $(J_1, J_2)$ such that $J_1, J_2 \subseteq [d_x], J_1 \cap J_2 = \emptyset,$ and $|J_1 \cup J_2| = s$. 
\end{proposition}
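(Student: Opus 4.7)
The plan is to apply Balas' classical extended-formulation theorem for convex hulls of finite unions of bounded polyhedra, following the same blueprint as the proof of Proposition~\ref{prop:convex_hull_D}. First, I would rewrite $\D^{l,s}_t(x_{t-1},\om_t)$ as a finite union $\bigcup_{h\in[|\J^s_t|]} P^h$ of polyhedra, where for each $(J^h_1,J^h_2)\in\J^s_t$ the polyhedron $P^h$ is obtained from $\D^{l,LP}_t(x_{t-1},\om_t)$ by adjoining $x_{t,j}=0$ for $j\in J^h_1$ and $x_{t,j}=1$ for $j\in J^h_2$; this exactly captures one resolution of the $s$ disjunctions $x_{t,j}=0\vee x_{t,j}=1$. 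By Assumption~\ref{amp:feasible_region} the $x_t,y_t$ components of each $P^h$ are bounded, and the cut inequalities together with feasibility bound $\phi_t$ on every nonempty disjunct, so each $P^h$ is a bounded polyhedron.

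Next, I would apply Balas' theorem to express $\conv\bigl(\bigcup_h P^h\bigr)$ as the projection onto $(x_t,y_t,\phi_t)$ of the homogenized system with convex multipliers $\zeta^h_{t,0}\ge 0$ satisfying $\sum_h\zeta^h_{t,0}=1$, scaled variables $(\zeta^h_{t,1},\zeta^h_{t,2},\zeta^h_{t,4})\in\zeta^h_{t,0}\cdot P^h$, and the aggregation $(x_t,y_t,\phi_t)=\sum_h(\zeta^h_{t,1},\zeta^h_{t,2},\zeta^h_{t,4})$. Since $x_{t-1}$ enters each $P^h$ only as a right-hand-side parameter, I would introduce $\zeta^h_{t,3}$ to represent $\zeta^h_{t,0}\,x_{t-1}$, which aggregates to $\sum_h\zeta^h_{t,3}=x_{t-1}$. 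Scaling each defining inequality of $P^h$ by $\zeta^h_{t,0}$ then rewrites $A_t(\om_t)x_t+B_t(\om_t)y_t+C_t(\om_t)x_{t-1}\ge b_t(\om_t)$ into the linear constraint displayed in \eqref{eq:tight_form_hierarchy_relaxation}, the fixing conditions into $\zeta^h_{t,1,j}=0$ and $\zeta^h_{t,1,j}=\zeta^h_{t,0}$, and the cut inequalities into $\zeta^h_{t,4}-(\pi^k_t)^\top\zeta^h_{t,1}-\gamma^k_t\zeta^h_{t,0}\ge 0$.

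The main technical care I expect to need is in handling degenerate multipliers $\zeta^h_{t,0}=0$ and in arguing that $\zeta^h_{t,3}=\zeta^h_{t,0}\,x_{t-1}$ is faithfully transmitted by the single aggregation $\sum_h\zeta^h_{t,3}=x_{t-1}$ even when some $\zeta^h_{t,0}$ vanish. This is where the boundedness of the disjuncts (inherited from Assumption~\ref{amp:feasible_region}) is essential: it trivializes the recession cones of the $P^h$ and makes Balas' formulation exact, so that the projection equals the convex hull rather than just its closure. The remainder is a routine algebraic verification that the projection of \eqref{eq:tight_form_hierarchy_relaxation} coincides with $\bigcup_h P^h$, which completes the proof.
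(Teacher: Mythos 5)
Your overall route---indexing the disjuncts by the pairs $(J^h_1,J^h_2)\in\J^s_t$, fixing $x_{t,j}=0$ on $J^h_1$ and $x_{t,j}=1$ on $J^h_2$ inside the LP relaxation, and applying Balas' extended formulation---is exactly the paper's; the paper merely packages it by rewriting $\D^{l,s}_t(x_{t-1},\om_t)$ as a disjunction over $\J^s_t$ and invoking Proposition~\ref{prop:convex_hull_D}, whose proof already contains the Balas step. The gap is in your justification of exactness: the claim that each disjunct $P^h$ is a \emph{bounded} polyhedron is false, and you explicitly make it load-bearing. The cut inequalities $\phi_t\ge(\pi^k_t)^\top x_t+\gamma^k_t$ bound $\phi_t$ only from below, so every nonempty $P^h$ contains the ray in the $+\phi_t$-direction and has a nontrivial recession cone; boundedness therefore cannot be what makes the Balas projection equal the convex hull rather than its closure. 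The argument that actually works (and is the one in the proof of Proposition~\ref{prop:convex_hull_D}) is that the nonempty disjuncts have \emph{identical} recession cones---trivial in $(x_t,y_t)$ by the compactness assumption, plus the common $\phi_t$-ray---which implies that the convex hull of the union is polyhedral, hence closed, so the closed convex hull produced by Balas' theorem coincides with the convex hull itself.

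A second, related weakness concerns $\zeta^h_{t,3}$. In \eqref{eq:tight_form_hierarchy_relaxation} these variables are not individually tied to $\zeta^h_{t,0}x_{t-1}$; only their sum is fixed to $x_{t-1}$, so a priori the projection could strictly contain $\conv(\D^{l,s}_t(x_{t-1},\om_t))$. You correctly flag this, but again defer its resolution to boundedness, which is not the relevant tool. The paper resolves it by lifting $x_{t-1}$ to a nonnegative variable bounded by $1$, taking the Balas hull of the lifted disjunctive set $\F^l_t(\om_t)$, and then proving $\conv(\F^l_t(\om_t)\cap\Eset(\bar{x}_{t-1}))=\conv(\F^l_t(\om_t))\cap\Eset(\bar{x}_{t-1})$: since $\bar{x}_{t-1}$ is binary and every atom $x^j_{t-1}$ of a convex combination lies in $[0,1]^{d_x}$, each atom must itself satisfy $x^j_{t-1}=\bar{x}_{t-1}$. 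Without this step (or an equivalent one) your argument does not rule out that the projection is a strict relaxation of the convex hull.
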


Proposition~\ref{prop:hierarchy_relax} provides the following relaxation that can be used to generate cuts in Line~\ref{line:da_sddp_solve_relaxation} of Algorithm~\ref{alg:da_sddp}: 
$\tilde{Q}^{l,s}_t(x^l_{t-1}, \xi^l_t) = \min \Big\{f_t(x_t, y_t, \xi^l_t) + \phi^l_t(x_t): (x_t, y_t) \in \proj_{x_t, y_t}(\D^{l,s}_t(x^l_{t-1}, \xi^l_t)) \Big\},$
where $\tilde{Q}^{l,1}_t(x^l_{t-1}, \xi^l_t) \leq \dots \leq \tilde{Q}^{l,d_x}_t(x^l_{t-1}, \xi^l_t) = \hat{Q}^{l}_t(x^l_{t-1}, \xi^l_t)$. For $s=d_x$, a Benders cut obtained by solving this relaxation is a supporting hyperplane satisfying \eqref{eq:cut_supporting_plane}. For the smaller values of $s$, the Benders cut does not necessarily support the value function $\hat{Q}^{l}_t(\cdot)$, but the relaxations are computationally easier to solve. The value of $s$ can be adjusted either before or during the execution of the algorithm to address this trade-off between the computational effort and the effectiveness of the cuts.

\begin{remark}
    The above relaxations can be readily extended to the case where $y_t$ are mixed-binary variables. Furthermore, if $y_t$ are mixed-integer, a hierarchy of relaxations can be derived by employing the binary expansion as discussed in Remark~\ref{rem:binary_expansion}.
\end{remark}

\section{Computational Tests} \label{sec:computational_results}

In this section, we present computational results for the DRR-SDDP-C, DRR-SDDP-R, DRO-SDDP-C, and DRO-SDDP-R algorithms. Recall that these are specific implementations of DA-SDDP, utilizing the approximations \eqref{eq:drr_decomp}, \eqref{eq:drr_reform_approx}, \eqref{eq:dra_sep_approx}, and \eqref{eq:dra_reform_approx}, respectively. We apply these algorithms to solve instances of multistage maximum flow and facility location interdiction problems with distributional ambiguity.
All the algorithms are implemented in Julia 1.8 where subproblems, coefficient-computing problems~\eqref{eq:drr_decomp_coefficients}, and distribution separation problem~\eqref{eq:dra_dsp} are solved using Gurobi 9.5 with an optimality tolerance of $10^{-4}$. We also integrate our implementation of the inner functionalities of our DA-SDDP algorithms with \texttt{SDDP.jl} \cite{SDDP.jl} package to be consistent with the research community, e.g., \cite{Philpott18}, thereby making it convenient for future computational and applied users of these algorithms. We conducted all tests on a machine equipped with an Intel Core i7 processor (3.8 GHz), utilizing a single thread, and 32 GB RAM.

In our implementations, all the algorithms generate a strengthened Benders cut and an integer optimality cut alternately during the backward steps. For details on the strengthened Benders cuts, we refer readers to Zou et al.~\cite{Zou}. For ambiguity set, we consider Wasserstein ambiguity set with the $l_1$ norm throughout all test instances. Note that in this case the distribution separation problem~\eqref{eq:dra_dsp} is a linear program and the subproblem~\eqref{eq:subproblem} in the DRR-SDDP-R algorithm is a mixed-binary linear program.

\subsection{MS-MFIP with Distributional Ambiguity} \label{sec:max-flow-results}
\blue{
Throughout this section, we consider DRR and DRO variants of the MS-MFIP formulation that is provided in Appendix.~\ref{apx:nip}.
In the subsequent sections, we present the results of a comparative analysis of the algorithms and demonstrate the significance of DRR and DRO for MS-MFIP. In addition, we extend the analysis to the case involving data corruption where the defender can deceive the interdictor by providing false information about critical arcs in a network.
}

\vspace{-0.5em}
\subsubsection{Instance generation and computational results}

\vspace{-0.5em}
Networks are randomly generated, following the method presented in Cormican et al.~\cite{Cormican}.
First, we place all nodes, excluding the source and sink nodes, in a grid pattern. Next, we establish connections between the leftmost and rightmost nodes in the grid to the source and sink nodes, respectively, using non-interdictable arcs with infinite capacity. 
Then, every pair of adjacent nodes in the grid is connected by an arc. 
Horizontal arcs are oriented from left to right, and vertical arcs, connecting the leftmost or rightmost nodes, are oriented from up to down. 
The orientations of the remaining arcs are randomly chosen. To avoid trivial solutions, e.g., removing all horizontal arcs in the same column, we set 80 percent of all arcs to be interdictable. 
Following the above procedure, we generate two distinct networks with different sizes. 
For the first network and the second network, we sample realizations of the random capacity of each arc uniformly distributed on $[30, 60]$ and $[20, 90]$, respectively, to construct the support $\Om_t$ of size $|\Om|$ for each stage $t$.
We set the Wasserstein ball size parameter $\epsilon$ to $30$. The interdiction budget for each stage is set to one to avoid a trivial solution where an interdiction solution in an early stage completely separates the source and sink nodes, leaving no arcs to remove in later stages.
\begin{wraptable}[23]{r}{0.5\textwidth}
\small
\vspace{-5pt}
\setlength{\tabcolsep}{4pt}
    \centering
    \caption{Details of DRR- and DRA-MFIP instances}\label{tab:instance_summary_mfip}
    \begin{tabular}{lrrrr}
    \topline
    Instance &
    $|N| \times |A|$ &
    $T$ &
    $|\Omega|$ &
    \#Scenario \\ \midline
    NI-1-3-5  & 37 x 73  & 3 & 5  & 25     \\
    NI-1-3-10 &          &   & 10 & 100    \\
    NI-1-3-15 &          &   & 15 & 225    \\ \cmidrule{3-5}
    NI-1-4-5  &          & 4 & 5  & 125    \\
    NI-1-4-10 &          &   & 10 & 1000   \\
    NI-1-4-15 &          &   & 15 & 3375   \\ \cmidrule{3-5}
    NI-1-5-5  &          & 5 & 5  & 625    \\
    NI-1-5-10 &          &   & 10 & 10000  \\
    NI-1-5-15 &          &   & 15 & 50625  \\ \cmidrule{3-5}
    NI-1-6-5  &          & 6 & 5  & 3125   \\
    NI-1-6-10 &          &   & 10 & 100000 \\
    NI-1-6-15 &          &   & 15 & 759375 \\ \midline
    NI-2-3-5  & 52 x 106 & 3 & 5  & 25     \\
    NI-2-3-10 &          &   & 10 & 100    \\
    NI-2-3-15 &          &   & 15 & 225    \\ \cmidrule{3-5}
    NI-2-4-5  &          & 4 & 5  & 125    \\
    NI-2-4-10 &          &   & 10 & 1000   \\
    NI-2-4-15 &          &   & 15 & 3375   \\ 
    \bottomline
    \end{tabular}
\end{wraptable}

Table~\ref{tab:instance_summary_mfip} summarizes the details of the test instances. Each row corresponds to a single instance that is labeled accordingly under the \textit{Instance} column.
The naming convention NI-$i$-$T$-$|\Omega|$ is used for an instance with the $i$th network among the two aforementioned networks, $T$ stages, and $|\Om|$ realizations per stage.
The labels $|N|\times|A|$ and \#Scenario denote the number of nodes and arcs of the network and the total number of scenario paths, respectively.
For termination conditions, we specify a time limit of 3 hours for all algorithms. Also, there is an early-termination condition based on the convergence of lower bound. In particular, the algorithm is stopped if the lower bound fails to improve for 100 consecutive iterations.

\begin{table}[tbp]
\centering
\caption{Performance comparison of algorithms for DRR- and DRA-MFIP instances}\label{tab:comp_performance_result_mfip}
\setlength{\tabcolsep}{3pt}
\begin{tabular}{lrrrrrrrr}
\topline
  &
  \thead{2}{DRA-SDDP-C} & 
  \thead{2}{DRA-SDDP-R} &
  \thead{2}{DRR-SDDP-C} & 
  \thead{2}{DRR-SDDP-R} \\ 
  \cmidrule(lr){2-3} \cmidrule(lr){4-5} \cmidrule(lr){6-7} \cmidrule(lr){8-9}
    \multicolumn{1}{c}{Instance} & 
    \thead{1}{LBound} & 
    \thead{1}{Time (s)} & 
    \thead{1}{LBound} & 
    \thead{1}{Time (s)} & 
    \thead{1}{LBound} & 
    \thead{1}{Time (s)} & 
    \thead{1}{LBound} & 
    \thead{1}{Time (s)} \\ \midline
NI-1-3-5  & 537.08  & 39.1     & 537.08  & 224.1    & 528.10  & 41.1     & 528.10  & 585.0    \\
NI-1-3-10 & 535.00  & 50.3     & 535.00  & 1653.2   & 527.05  & 60.3     & 527.05  & 2172.9   \\
NI-1-3-15 & 534.76  & 105.0    & 534.76  & 10800+  & 528.11  & 103.1    & 528.11  & 6774.8   \\
NI-1-4-5  & 655.58  & 169.1    & 655.58  & 4794.1   & 644.76  & 173.8    & 644.76  & 10800+  \\
NI-1-4-10 & 670.44  & 547.2    & 662.12  & 10800+  & 655.45  & 453.2    & 600.68  & 10800+  \\
NI-1-4-15 & 657.02  & 401.6    & 641.21  & 10800+  & 643.02  & 525.3    & 590.32  & 10800+  \\
NI-1-5-5  & 735.84  & 1019.2   & 729.85  & 10800+  & 727.05  & 1455.2   & 645.57  & 10800+  \\
NI-1-5-10 & 718.36  & 1443.9   & 676.38  & 10800+  & 701.54  & 1958.1   & 583.27  & 10800+  \\
NI-1-5-15 & 792.14  & 704.2    & 774.19  & 10800+  & 769.31  & 746.9    & 650.32  & 10800+  \\
NI-1-6-5  & 737.51  & 3058.8   & 705.42  & 10800+  & 725.69  & 3491.4   & 631.55  & 10800+  \\
NI-1-6-10 & 762.90  & 5066.5   & 672.96  & 10800+  & 744.84  & 6853.9   & 593.40  & 10800+  \\
NI-1-6-15 & 748.97  & 10800+  & 594.48  & 10800+  & 715.37  & 10800+  & 576.73  & 10800+  \\
NI-2-3-5  & 1018.51 & 54.2     & 1018.51 & 115.3    & 1013.16 & 61.7     & 1013.16 & 906.5    \\
NI-2-3-10 & 892.61  & 73.9     & 892.61  & 994.0    & 884.66  & 96.8     & 884.66  & 3543.9   \\
NI-2-3-15 & 903.28  & 148.8    & 903.28  & 263.7    & 891.10  & 171.7    & 891.10  & 10244.7  \\
NI-2-4-5  & 1181.45 & 410.0    & 1181.45 & 9978.1   & 1173.41 & 436.3    & 1136.69 & 10800+  \\
NI-2-4-10 & 1096.20 & 1426.5   & 1062.48 & 10800+  & 1086.77 & 1574.4   & 981.57  & 10800+  \\
NI-2-4-15 & 1105.07 & 3387.2   & 1027.92 & 10800+  & 1089.51 & 3553.0   & 824.64  & 10800+  \\ 
\bottomline
\end{tabular}
\end{table}

In Table~\ref{tab:comp_performance_result_mfip}, we report lower bounds and solution times in seconds for each algorithm, labeled LBound and Time (s), respectively. 
We may also obtain upper bounds computed statistically through a sampling approach using the policy after termination. However, to focus on the computational performance of the algorithms in terms of convergence while running, we will only report lower bounds and solution times here.
The results indicate that the DRO-SDDP-C algorithm provides better lower bounds and solution times than the DRO-SDDP-R algorithm for all instances. On average, the DRO-SDDP-C algorithm converges 17.2 times faster than the DRO-SDDP-R algorithm. This performance advantage increases to 26.4 times (on average) for eight instances where both algorithms provide the same lower bounds.
The results show that the DRO-SDDP-R algorithm's performance is more susceptible to $T$ and $|\Omega|$ than the DRO-SDDP-C algorithm. For example, as $|\Omega|$ increases from 5 to 15 for NI-1-3-5, the DRO-SDDP-R algorithm's solution time increases by 48.2 times, while the DRO-SDDP-C algorithm's solution time increases by 2.7 times. Similarly, when $T$ increases to 5 and 6 from 3 for NI-1-3-5, the DRO-SDDP-R algorithm fails to solve the 5-stage and 6-stage instances---NI-1-5-5 and NI-1-6-5---within the time limit, whereas the DRO-SDDP-C algorithm solves all instances within the time limit. This is mainly because the DRO-SDDP-R algorithm adds a significantly larger number of cuts ($|\Omega|^2$ cuts for every subproblem solved) for each iteration, which increases the solution times for subproblems.
Regarding DRR-MFIP, the DRR-SDDP-C algorithm outperforms the DRR-SDDP-R algorithm for all test instances. 
In terms of solution time, the DRR-SDDP-C algorithm converges, on average, 22.3 times faster than the DRR-SDDP-R algorithm, and this advantage increases to 41.3 times for seven instances where both the algorithms provide the same bounds.
This is because the DRR-SDDP-R algorithm solves larger subproblems that arise from \eqref{eq:drr_reform_approx_b}, incorporated by the linearization and the ambiguity set, respectively.

\subsubsection{Impact of DRO and DRR on MS-MFIP} \label{sec:comp_result_out_of_sample1}

To demonstrate the impact of DRO and DRR on MS-MFIP, we present the results from out-of-sample tests, which are conducted as follows. We first sample realizations of the capacity over the stages. Then, the DRO-MFIP and DRR-MFIP instances obtained over this sample ($\{\Om_t: t \in [T]\}$) are solved using the algorithms. The resulting subproblems along with the cuts generated by the algorithms define a policy for MS-MFIP, i.e., the decision rule that selects the set of arcs to remove given a realization of the capacity. We simulate the DRR and DRO policies using scenario paths of the capacity that are sampled independently from the realizations used in solving the problems.

\begin{figure}
    \centering
    \begin{subfigure}{0.49\textwidth}
        \centering
        \includegraphics[width=\textwidth]{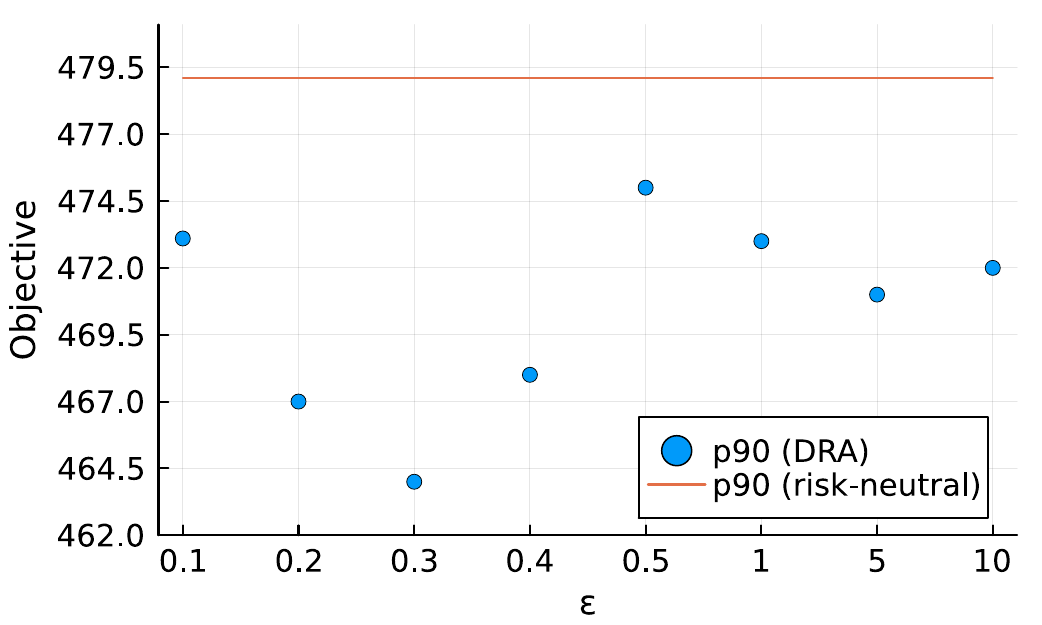}
        \caption{90th percentile (DRA-MFIP)}
        \label{fig:dra_mfip_1}
    \end{subfigure}
    \hfill
    \begin{subfigure}{0.49\textwidth}
        \centering
        \includegraphics[width=\textwidth]{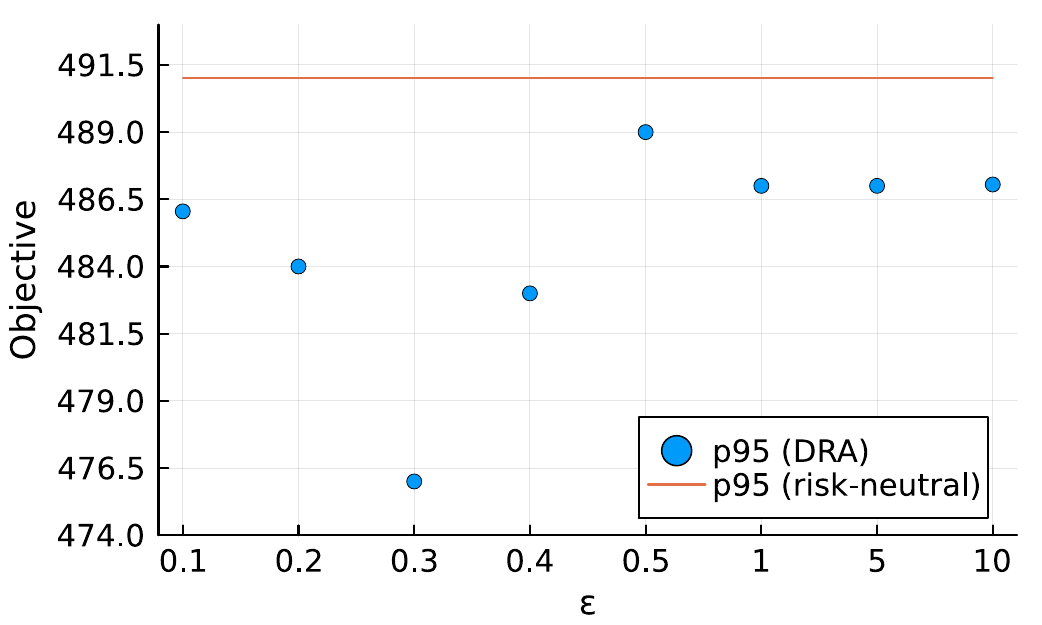}
        \caption{95th percentile (DRA-MFIP)}
        \label{fig:dra_mfip_2}
    \end{subfigure}
    \\
    \begin{subfigure}{0.49\textwidth}
        \centering
        \includegraphics[width=\textwidth]{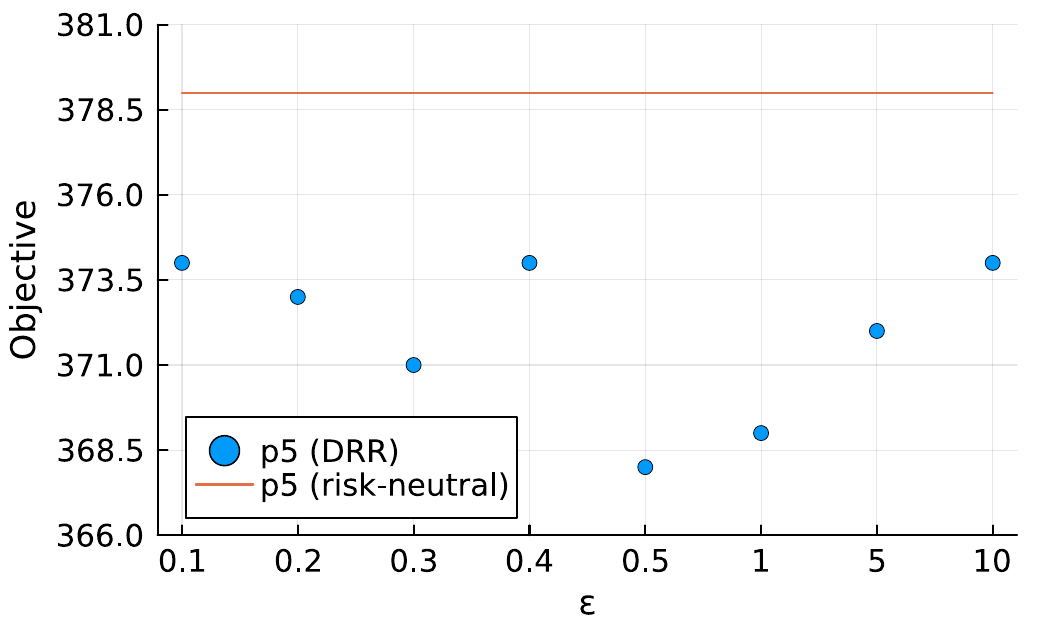}
        \caption{5th percentile (DRR-MFIP)}
        \label{fig:drr_mfip_1}
    \end{subfigure}
    \hfill
    \begin{subfigure}{0.49\textwidth}
        \centering
        \includegraphics[width=\textwidth]{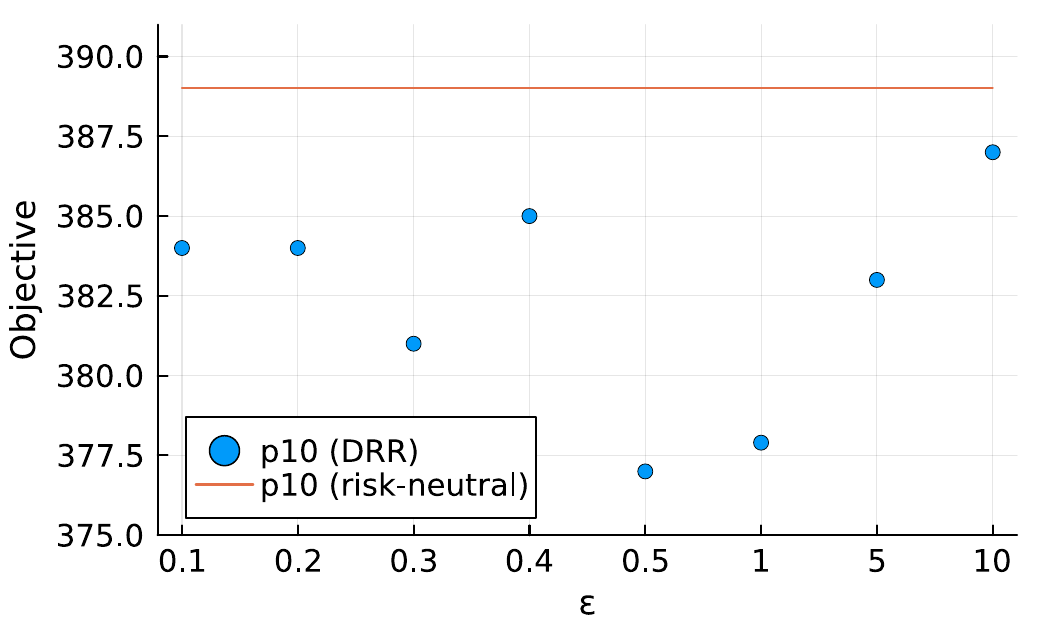}
        \caption{10th percentile (DRR-MFIP)}
        \label{fig:drr_mfip_2}
    \end{subfigure}
       \caption{Plots of percentiles from out-of-sample tests on DRA-MFIP (90th and 95th percentiles) and DRR-MFIP (5th and 10th percentiles)}
       \label{fig:dra_drr_mfip}
\end{figure}

Throughout the out-of-sample tests in this section, we consider the problem with 4 stages, 30 realizations per stage, 3000 independently-sampled scenario paths, and a network with 30 nodes and 60 arcs. All capacity realizations are sampled from a truncated normal distribution with a mean of $30$, a standard deviation of $5$, and values constrained to the interval $[10, 50]$. The policies are generated for the set of different Wasserstein ball size parameter $\epsilon$ belonging to $\{0, 0.1, 0.2, 0.3, 0.4, 0.5, 1, 5, 10\}$. 
We present the results from the simulations in Figure~\ref{fig:dra_drr_mfip}: Figures~\ref{fig:dra_mfip_1} and \ref{fig:dra_mfip_2} show the 90th and 95th percentiles of the objective values obtained by the DRO policies, and Figures~\ref{fig:drr_mfip_1} and \ref{fig:drr_mfip_2} show the 5th and 10th percentiles of the objective values obtained by the DRR policies. Each figure contains the result from the policy generated by SDDiP with $\epsilon=0$, i.e., the risk-neutral policy, as an orange horizontal line for comparison.

The DRO policies give the smaller 90th and 95th percentiles compared to the risk-neutral policy. As the ball size $\epsilon$ increases from $0.1$ to $0.3$, the corresponding policies yield the lower 90th and 95th percentiles, indicating the better out-of-sample performance. As the ball size $\epsilon$ increases further, to $0.5, 1, 5,$ and $10$, the overall performance of the policies drops, yet their 90th and 95th percentiles remain smaller than those of the risk-neutral policy. This demonstrates that the DRO policies achieve the robustness of interdiction solutions by incorporating conservatism over unfavorable realizations in $\Om_t, t \in [T]$. 
On the other hand, the DRR policies give the smaller 5th and 10th percentiles compared to the risk-neutral policy. 
This indicates that the DRR policies yield more effective interdiction solutions than the risk-neutral policy for certain scenario paths and reflect a more pessimistic perspective of the network user on the network performance. Consequently, this pessimistic view can identify the network vulnerabilities that are unnoticed by the risk-neutral policy.
The level of pessimism increases as $\epsilon$ increases from $0.1$ to $0.5$, except for $\epsilon=0.4$. However, as it continues to increase up to $\epsilon=10$, the pessimistic view on the performance diminishes.

\subsubsection{Significance of DRR framework for MS-MFIP with data corruption} \label{sec:comp_result_out_of_sample2}
In this test setup, we assume the network user has a capability of deceiving the interdictor by providing false information about critical arcs, i.e., most essential arcs to maintain the maximum flow. Specifically, the network user introduces uncertainty about the success of interdiction on the critical arcs before the interdictor makes a decision. In this adversarial setting, we consider a risk-receptive interdictor, i.e., an optimistic decision-maker, to mitigate the impact of the false information. The significance of DRR is demonstrated through out-of-sample tests we conduct as follows.

To simplify analysis, we focus on a two-stage case of MS-MFIP, where the interdiction decision is made only in the first stage, and the flow decision is made only in the second stage. That is, the interdictor's decision is made based on the corrupted sample data of the random vector in the second stage. Let $G = (N, A)$ represent the network. For random data generation, we generate samples of the capacity of each arc $a \in A$ from a normal distribution with mean $\mu_a$ and standard deviation $\sigma_a = \mu_a / 4$, where $\mu_a$ is drawn from a discrete uniform distribution in the interval $[20, 40]$. The network user manipulates these sample data by solving deterministic MS-MFIP on $G$ with true mean values, identifying the critical arcs in $A$, and introducing fake data where interdiction on these arcs fails, replacing an $\bar{\alpha}$ proportion of the sample data. The parameter $\bar{\alpha} \in [0, 1]$ denotes the contamination level. Throughout all tests, we fix the number of scenarios to 30 for consistency.

\begin{figure}
    \centering
    \includegraphics[width=0.7\textwidth]{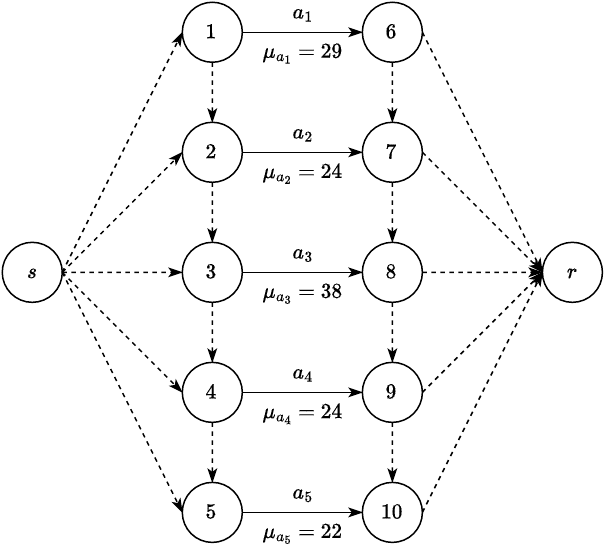}
    \caption{A network in a $5\times2$ grid shape.}
    \label{fig:network5x2}
\end{figure}
We first consider a simple network depicted in Figure~\ref{fig:network5x2}. The solid lines represent interdictable arcs, and the dotted lines represent non-interdictable arcs. The label on each arc represents its name and mean of capacity.
In this network, the most critical arc is $a_3$, followed by $a_1$ as the second most critical. The third most critical arc is either $a_2$ or $a_4$, as they share the same mean value. We assume the network user manipulates the data associated with arcs $a_1, a_2,$ and $a_3$.
Now we assess the impact of the corrupted data and DRR on optimal interdiction solutions by solving the DRR variant of MS-MFIP with Wasserstein ambiguity set. We also solve the DRO variant for comparative purposes. The contamination level $\bar{\alpha}$ varies in $\{0.2, 0.4, 0.6\}$, and the Wasserstein ball size parameter $\epsilon$ varies in $\{0, 10, 20, 40\}$. The interdiction budget $b$ is set to $2$. In Table~\ref{tab:network5x2-result}, we present the optimal interdiction solutions for these models.
\begin{table}
    \centering
    \caption{Optimal interdiction solutions for the DRR and DRO models with $\bar{\alpha} \in \{0.2, 0.4, 0.6\}$ and $\epsilon\in\{0,10,20,40\}$.}
    \label{tab:network5x2-result}
    \begin{tabular}{cccccccc}
    \topline
         & \thead{7}{$\epsilon$} \\ \cmidrule(lr){2-8}
         & \thead{1}{Risk-neutral} & \thead{3}{DRR} & \thead{3}{DRO} \\ \cmidrule(lr){2-2} \cmidrule(lr){3-5} \cmidrule(lr){6-8}
         \thead{1}{$\bar{\alpha}$} & \thead{1}{$0$} & \thead{1}{$10$} & \thead{1}{$20$} & \thead{1}{$40$} & \thead{1}{$10$} & \thead{1}{$20$} & \thead{1}{$40$} \\ \midline
         $0.2$ & $(a_3, a_4)$ & $(a_1, a_3)$ & $(a_1, a_3)$ & $(a_2, a_3)$ & $(a_3, a_4)$ & $(a_4, a_5)$ & $(a_4, a_5)$ \\
         $0.4$ & $(a_3, a_4)$ & $(a_1, a_3)$ & $(a_1, a_3)$ & $(a_2, a_3)$ & $(a_4, a_5)$ & $(a_4, a_5)$ & $(a_4, a_5)$ \\
         $0.6$ & $(a_4, a_5)$ & $(a_4, a_5)$ & $(a_1, a_3)$ & $(a_1, a_3)$ & $(a_4, a_5)$ & $(a_4, a_5)$ & $(a_4, a_5)$ \\
    \bottomline
    \end{tabular}
\end{table}
When the interdictor is risk-neutral, i.e., $\epsilon = 0$, solutions $(a_3, a_4)$ and $(a_4, a_5)$ are chosen. This demonstrates that the fake uncertainty of interdiction success motivates the interdictor to deviate from the optimal solution $(a_1, a_3)$ with the true mean values by choosing arcs $a_4$ and $a_5$, where the success of interdiction is guaranteed across all scenarios.
The risk-averse interdictor prioritizes the most conservative solution $(a_4, a_5)$, illustrating how DRO could be susceptible to data corruption in adversarial settings.
In contrast, the risk-receptive interdictor includes the critical arcs $a_1, a_2,$ and $a_3$ in their solutions in almost all cases. For $\bar{\alpha} = 0.6$, the optimal solution diverges from these critical arcs when $\epsilon=10$, selecting $(a_4, a_5)$, but it reverts to interdicting the critical arcs as $\epsilon$ increases to $20$ and $30$.
\begin{figure}
    \centering
    \includegraphics[width=0.75\textwidth]{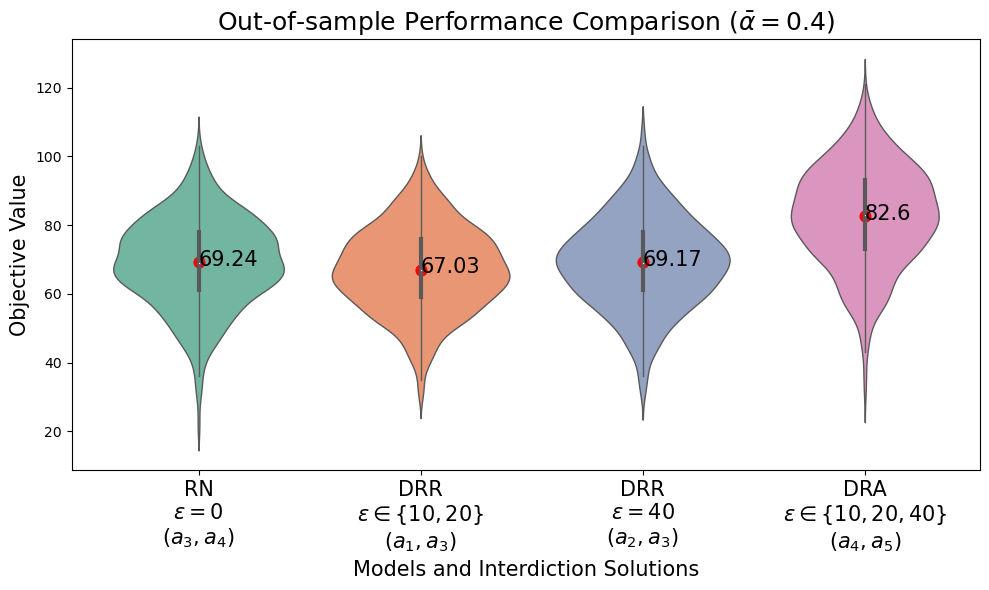}
    \caption{Comparison of interdiction solutions from different models in out-of-sample test. Network has a $5\times2$ grid shape, and contamination level $\bar{\alpha}$ is $0.4$.}
    \label{fig:oos-adversarial-1}
\end{figure}
Next, we evaluate the out-of-sample performance of these interdiction solutions. We randomly sample 1000 scenario paths from the clean distributions and solve the second-stage problem using the solutions provided in Table~\ref{tab:network5x2-result} for $\bar{\alpha}=0.4$. The plot of objective values for the sample paths is illustrated in Figure~\ref{fig:oos-adversarial-1}, with average values denoted by red marks along with their respective numbers. Since we are evaluating the interdictor's solutions, lower objective values indicate better performance. Comparing average performances, the DRR interdiction solutions, $(a_1, a_3)$ and $(a_2, a_3)$, outperform the other solutions. The DRO interdiction solution $(a_4, a_5)$ shows the most inferior out-of-sample performance among the four solutions with an average objective value that is $23.3$ percent higher than that of the DRR interdiction solution $(a_1, a_3)$.

\begin{figure}
    \centering
    \includegraphics[width=0.75\textwidth]{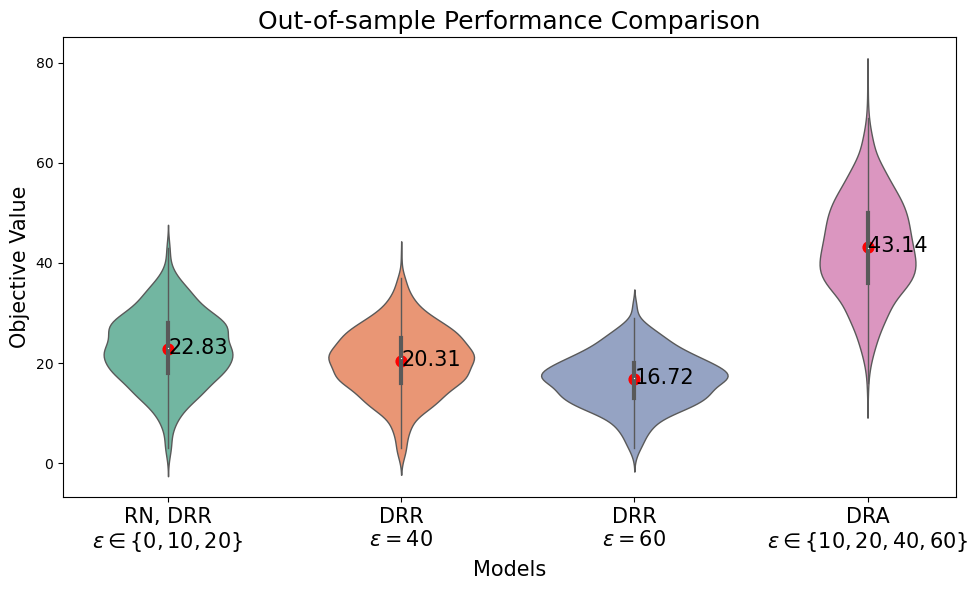}
    \caption{Comparison of interdiction solutions from different models in out-of-sample test. Network has a $6\times3$ grid shape, interdiction budget $b$ is $5$, and contamination level $\bar{\alpha}$ is $0.4$.}
    \label{fig:oos-adversarial-2}
\end{figure}
To assess the significance of DRR in a more complicated instance, we perform additional tests with the network in a $6\times3$ grid shape and the interdiction budget of $5$. We set $\bar{\alpha} = 0.4$, obtain the DRR and DRO solutions for $\epsilon\in\{0, 10, 20, 40\}$, and conduct the out-of-sample test with these solutions. The results are plotted in Figure~\ref{fig:oos-adversarial-2}. Consistent with our observations in the simpler instance, the out-of-sample performance of the DRR solutions improves as $\epsilon$ increases. In contrast, the DRO model produces identical solutions for all values of $\epsilon$, yielding the poorest average performance among the four solutions.

\subsection{Multistage facility location interdiction problem with distributional ambiguity} \label{sec:comp_result_flip}

\blue{
We evaluate the computational performance of the algorithms for MS-FLIP with distributional ambiguity, referred to as DRR-FLIP and DRO-FLIP. 
The detailed formulation of MS-FLIP is provided in Appendix~\ref{apx:nip}.
}

\subsubsection{Computational results}

%We report the computational performances of the algorithm for instances of DRR-FLIP and DRO-FLIP. 
To generate instances, we use the following method that is similar to the one used in \cite{Yu}. We first randomly sample $(L + M)$ points from a $100 \times 100$ grid and place demand points and facilities. For each demand point $l \in [L]$, we randomly choose $\mu_l$ from a uniform discrete distribution $[20, 40]$. Then, to construct the support $\Om_t$ of size $|\Omega|$ for each stage $t$, we randomly sample $|\Omega|$ realizations of the random demand for $l \in [L]$ from a truncated normal distribution, where the mean is $\mu_l$, the standard deviation is $\sigma_l=\mu_l / 4$, and the truncation interval is $[1, 60]$. 
For all test instances, we set the Wasserstein ball size $\epsilon$ to $10$ and set the interdiction budget for each stage to one, i.e., $r_t = 1, t \in [T]$.
The details of the test instances are given in Table~\ref{tab:instance_summary_flip}. Each instance, denoted by LI-$i$-$T$-$|\Om|$, involves the $i$th network out of two randomly generated networks, $T$ stages, and $|\Om|$ realizations per stage. For each row of the table, the labels $L\times M$ and \#Scenario denote the number of demand points and facilities and the number of total scenario paths, respectively.
The termination conditions are identical to those used for the DRR- and DRO-MFIP instances. 

\begin{wraptable}{r}{0.49\textwidth}
\small
\vspace{-18pt}
\setlength{\tabcolsep}{4pt}
\renewcommand{\arraystretch}{0.9}
\centering
\caption{Details of DRR- and DRA-FLIP instances}\label{tab:instance_summary_flip}
\begin{tabular}{lrrrr}
\topline
  Instance &
  $L \times M$ &
  $T$ &
  $|\Omega|$ &
  \#Scenario \\ \midline
LI-1-3-10 & 10 x 20  & 3 & 10 & 100 \\
LI-1-3-20 &          &   & 20 & 400 \\
LI-1-3-50 &          &   & 50 & 2500 \\ \cmidrule{3-5}
LI-1-4-10 &          & 4 & 10 & 1000 \\
LI-1-4-20 &          &   & 20 & 8000 \\
LI-1-4-50 &          &   & 50 & 125000 \\ \cmidrule{3-5}
LI-1-5-10 &          & 5 & 10 & 10000 \\
LI-1-5-20 &          &   & 20 & 160000 \\
LI-1-5-50 &          &   & 50 & 6250000 \\ \midline
LI-2-3-10 & 15 x 30  & 3 & 10 & 100 \\
LI-2-3-20 &          &   & 20 & 400 \\
LI-2-3-50 &          &   & 50 & 2500 \\ \cmidrule{3-5}
LI-2-4-10 &          & 4 & 10 & 1000 \\
LI-2-4-20 &          &   & 20 & 8000 \\
LI-2-4-50 &          &   & 50 & 125000 \\ \cmidrule{3-5}
LI-2-5-10 &          & 5 & 10 & 10000 \\
LI-2-5-20 &          &   & 20 & 160000 \\
LI-2-5-50 &          &   & 50 & 6250000 \\ 
\bottomline
\end{tabular}
\end{wraptable}

Table~\ref{tab:comp_performance_result_flip} presents the upper bounds and solution times in seconds obtained by each algorithm under the label UBound and Time (s). The numbers in each row of the table correspond to the result for a single instance. Note that smaller bounds are better since they are upper bounds. For DRO-FLIP, the DRO-SDDP-C algorithm provides the upper bounds better than the DRO-SDDP-R algorithm for all instances. Also, on average, the DRO-SDDP-C algorithm is 38.9 times faster than the DRO-SDDP-R algorithm, and this advantage increases to 48.4 times for 11 instances where both the algorithms produce the same bounds.
The performance of the DRO-SDDP-R algorithm is comparatively susceptible to the number of realizations per stage. For example, the DRO-SDDP-R algorithm takes 458.7 seconds to solve LI-$2$-$3$-$10$, but it fails to converge within the time limit when solving LI-$2$-$3$-$50$. The DRO-SDDP-C algorithm, however, converges for the both instances within the time limit. 
When comparing the results for DRR-FLIP, the DRR-SDDP-C algorithm provides better upper bounds than the DRR-SDDP-R algorithm for all the instances. 
In terms of solution time, the DRR-SDDP-C algorithm is, on average, 25.8 times faster than the DRR-SDDP-R algorithm for all instances, and 26.7 times faster for 13 instances where both the algorithms produce the same bounds.
As discussed in the previous section, this shows that the both DRO-SDDP-R and DRR-SDDP-R algorithms require more time to solve due to the larger subproblems resulting from the reformulation techniques.

\begin{table}[tb]
\centering
\caption{Performance comparison of algorithms for DRR- and DRA-FLIP instances}\label{tab:comp_performance_result_flip}
\setlength{\tabcolsep}{2.9pt}
\begin{tabular}{lrrrrrrrr}
\topline
  &
  \thead{2}{DRA-SDDP-C} & 
  \thead{2}{DRA-SDDP-R} &
  \thead{2}{DRR-SDDP-C} & 
  \thead{2}{DRR-SDDP-R} \\ 
  % \hline
  \cmidrule(lr){2-3} \cmidrule(lr){4-5} \cmidrule(lr){6-7} \cmidrule(lr){8-9}
    \multicolumn{1}{c}{Instance} & 
    \thead{1}{UBound} & 
    \thead{1}{Time (s)} & 
    \thead{1}{UBound} & 
    \thead{1}{Time (s)} & 
    \thead{1}{UBound} & 
    \thead{1}{Time (s)} & 
    \thead{1}{UBound} & 
    \thead{1}{Time (s)} \\ \midline
LI-1-3-10 & 431.29 & 29.1   & 431.29 & 169.7   & 437.28 & 28.4   & 437.28  & 109.1   \\
LI-1-3-20 & 327.32 & 49.8   & 327.32 & 1761.0  & 333.39 & 41.3   & 333.39  & 630.6   \\
LI-1-3-50 & 384.83 & 75.8   & 384.83 & 10800+ & 392.50 & 74.5   & 392.50  & 3373.3  \\
LI-1-4-10 & 489.80 & 53.2   & 489.80 & 1170.2  & 498.44 & 48.6   & 498.44  & 622.1   \\
LI-1-4-20 & 479.23 & 68.5   & 479.23 & 6511.9  & 491.03 & 78.7   & 491.03  & 1963.0  \\
LI-1-4-50 & 651.45 & 105.6  & 651.45 & 10800+ & 668.28 & 106.8  & 668.28  & 10800+ \\
LI-1-5-10 & 654.12 & 73.6   & 654.12 & 1744.3  & 670.83 & 74.5   & 670.83  & 1501.3  \\
LI-1-5-20 & 774.27 & 89.5   & 774.27 & 4853.6  & 792.99 & 80.3   & 792.99  & 4635.5  \\
LI-1-5-50 & 929.85 & 156.2  & 931.32 & 10800+ & 955.35 & 162.9  & 978.63  & 10800+ \\
LI-2-3-10 & 452.23 & 74.1   & 452.23 & 458.7   & 456.76 & 78.3   & 456.76  & 333.4   \\
LI-2-3-20 & 463.56 & 119.4  & 463.56 & 1894.1  & 468.56 & 112.7  & 468.56  & 772.8   \\
LI-2-3-50 & 489.47 & 459.4  & 518.37 & 10800+ & 495.10 & 453.1  & 495.10  & 10800+ \\
LI-2-4-10 & 500.09 & 369.0  & 500.09 & 10800+ & 506.22 & 378.5  & 506.22  & 4815.8  \\
LI-2-4-20 & 508.09 & 476.5  & 531.57 & 10800+ & 514.78 & 444.9  & 514.78  & 8376.8  \\
LI-2-4-50 & 556.50 & 761.3  & 721.03 & 10800+ & 564.69 & 692.1  & 1194.59 & 10800+ \\
LI-2-5-10 & 599.37 & 534.6  & 661.32 & 10800+ & 605.32 & 550.7  & 1009.53 & 10800+ \\
LI-2-5-20 & 816.66 & 879.1  & 904.14 & 10800+ & 828.21 & 1026.6 & 1339.16 & 10800+ \\
LI-2-5-50 & 756.16 & 1732.8 & 949.12 & 10800+ & 767.99 & 1927.1 & 1756.89 & 10800+ \\ 
\bottomline
\end{tabular}
\end{table}

\section{Conclusion} \label{sec:conclusion}

We studied multistage stochastic integer and disjunctive programs under distributional ambiguity, considering the distributional risk-receptiveness and robustness in a decision making process. 
For distributionally risk-receptive multistage stochastic integer programs (DRR-MSIPs) without and with decision dependent uncertainty, we presented new classes of cutting planes and reformulation-based approaches to derive convex approximations of the optimistic expected cost-to-go functions. For distributionally robust multistage stochastic integer programs (DRO-MSIPs), we presented a cutting plane-based and reformulation-based approximations of the pessimistic expected cost-to-go functions.
We developed algorithms using these approximations and provided their finite convergence analysis.
Furthermore, we extended the algorithms for distributionally risk-receptive and distributionally robust multistage stochastic disjunctive programs (DRR- and DRO-MSDPs) and then presented applications of them to solve DRR- and DRO-MSIPs using a hierarchy of relaxations. We compared the algorithms for DRR- and DRO-MSIPs by solving multistage stochastic network interdiction problems under distributional ambiguity that are multi-round attacker-defender games and have not been addressed in the literature. 
The computational results show that the algorithms using the cutting plane-based approximations outperform the algorithms using the reformulation-based approximations by 26.1 times, on average, in terms of solution time until convergence.
In addition, we conducted out-of-sample tests, and their results demonstrate that the DRA policies provide robust decision rules for uncertainty, while the DRR policies may reveal the network vulnerabilities that are overlooked by risk-neutral policies for uncertainty. In an adversarial setting, we showcased that the DRR policies can be used to mitigate the impact of data corruption.

\paragraph{Data Availability Statement}{The instances used for computational studies in this paper are available in ``Multistage-Interdiction-Problems'' folder at \url{https://github.com/Bansal-ORGroup/}.}

\bibliographystyle{spmpsci}
\bibliography{reference.bib}

\newpage
\begin{appendices}

\section{List of Major Abbreviations} \label{apx:abbr}

\begin{itemize}
    \item \textit{DRO/DRR}: Distributionally robust (or robustness)/Distributionally risk-receptive (or risk-receptiveness).
    \item \textit{MSLP/MSIP/MSDP}: Multistage stochastic linear/integer/disjunctive program.
    \item \textit{SDDP}: Stochastic dual dynamic programming.
    \item \textit{DA-SDDP}: Our customized SDDP that addresses distributional ambiguity.
    \item \textit{DRR-SDDP-C}: DA-SDDP for DRR-MSIPs with the cutting plane-based approximation \eqref{eq:drr_decomp}.
    \item \textit{DRR-SDDP-R}: DA-SDDP for DRR-MSIPs with the reformulation-based approximation \eqref{eq:drr_reform_approx}.
    \item \textit{DRO-SDDP-C}: DA-SDDP for DRO-MSIPs with the cutting plane-based approximation \eqref{eq:dra_sep_approx}.
    \item \textit{DRO-SDDP-R}: DA-SDDP for DRO-MSIPs with the reformulation-based approximation \eqref{eq:dra_reform_approx}.
    \item \textit{MS-MFIP}: Multistage stochastic maximum flow interdiction problem
    \item \textit{MS-FLIP}: Multistage stochastic facility location interdiction problem
\end{itemize}

\section{Formulations of the Network Interdiction Problems} \label{apx:nip}

We provide detailed formulations of the multistage NIPs used in our computational tests. We focus on the risk-neutral formulations, as their DRO and DRR variants can be readily derived from these by appropriately maximizing or minimizing with respect to the probability distribution within an ambiguity, as in \eqref{eq:dra_model} and \eqref{eq:drr_model}.

\subsection{Formulation of MS-MFIP}

Consider a directed and capacitated network, denoted by $G = (N, A)$ where $N$ is a set of nodes and $A$ is a set of directed arcs of the network. The interdictor's objective is to minimize the total flow from the source node $s$ to the sink node $r$ of the network $G$ by interdicting a subset of arcs in $A$. In contrast, the network user's objective is to maximize the total flow given the interdicted network. At each stage, both the interdictor and the network user make their decisions as follows: The interdictor removes a set of arcs from the network $G$ given an interdiction budget, and the network user finds a maximum flow after observing the interdictor's decision. It is assumed that after an arc is interdicted, the network user cannot use it till the end of the time horizon. 

Let $x_t$ and $y_t$ be the interdiction decision vector and the flow decision vector, respectively, for stage $t \in [T]$. An interdiction decision $x_{t, a}$, for each arc $a \in A$, is binary, i.e., $x_{t, a}=1$, if an interdiction occurs on $a \in A$, and $x_{t, a}=0$, otherwise. Each arc $a \in A$ is associated with the interdiction cost, denoted by $f_{t,a}$. The interdiction budget is denoted by $b_t$, for each stage $t \in [T]$. We assume that the capacity of arc $a \in A$ is uncertain and denoted by $c_{t,a}(\bom_t)$.
We denote the set of the outgoing arcs and the set of the incoming arcs of node $n \in N$ by $\delta^+(n)$ and $\delta^-(n)$, respectively. For the brevity, we assume there exists a dummy arc from $r$ to $s$ in $A$ associated with infinite capacity and interdiction cost. Then, the bellman equation form of MS-MFIP is given by \eqref{eq:generic_msp_main} and \eqref{eq:generic_msp_bellman} where for each $t \in [T]$,
\begin{subequations} \label{eq:mmfip} \begin{align}
    Q_{t}(x_{t-1}, \bom_t) := \min \ & \psi_t(x_t, \bom_t) + \mathbb{E}_{P_{t+1}}\big[Q_{t+1} (x_t, \bom_{t+1})\big] \\ 
    \text{s.t.} \ 
        & x_t \geq x_{t-1} \label{eq:mmfip_a} \\
        & \sum_{a \in A} f_{t,a} x_{t, a} \leq b_t + \sum_{a \in A} f_{t,a} x_{t-1, a} \label{eq:mmfip_b} \\
        & x_t \in \{0,1\}^{|A|},
\end{align} \end{subequations}
and
\begin{subequations} \label{eq:max_flow} \begin{align}
\psi_t(x_t, \bom_t) := \max_{y_t \geq 0} \bigg\{y_{t, (r,s)} : &\sum_{a \in \delta^+(n)} y_{t, a} - \sum_{a \in \delta^-(n)} y_{t, a} = 0, \quad \forall n \in N, \label{eq:max_flow_con1} \\
    & y_{t,a} \leq c_{t,a} (\bom_t) (1-x_{t, a}), \quad \forall a \in A \label{eq:max_flow_con2} \bigg\}.
\end{align} \end{subequations}
In interdictor's problem~\eqref{eq:mmfip}, constraint~\eqref{eq:mmfip_a} ensures that the impact of interdiction on an arc remains till the end of the time horizon, and constraint~\eqref{eq:mmfip_b} restricts the total interdiction cost within the given budget $b_t$.
Given an interdiction solution $x_t$, the objective function of the interdictor's problem at stage $t \in [T]$, $\psi_t(x_t, \bom_t)$, is a value function that provides a maximum flow over the interdicted network. The function $\psi_t(x_t, \bom_t)$ is computed by solving the network user's problem~\eqref{eq:max_flow} where decision variable $y_{t,a}$ represents a flow on arc $a$ in $A$, constraints~\eqref{eq:max_flow_con1} enforce the flow balance on nodes in $N$, and constraints~\eqref{eq:max_flow_con2} restrict the capacity of arcs in $A$. Notice that because of constraints~\eqref{eq:max_flow_con2}, $y_{t,a}$ is restricted to be zero if the interdiction occurs on arc $a \in A$, i.e., if $x_{t,a} = 1$.

To solve each stage's problem during our tests, we first dualize the network user's problem~\eqref{eq:max_flow} and then reformulate the problem~\eqref{eq:mmfip} into a single-level minimization mixed-binary program.

\subsection{Formulation of MS-FLIP}

\blue{
The single-stage (deterministic) FLIP, also known as the $r$-interdiction median problem, is introduced by Church et al.~\cite{Church04}. 
The objective of the single-stage FLIP is to find a subset of $r$ facilities whose removal maximizes the network user's objective of minimizing the total weighted distance. MS-FLIP extends this problem to a multistage stochastic setting, where interdiction decisions are made sequentially over multiple time periods under demand uncertainty.
}

Let $L$ and $M$ be the number of demand points and facilities, respectively. At stage $t \in [T]$, let $x_{tm} \in \{0, 1\}$ be an interdiction decision variable, which equals 1, if the interdiction occurs on facility $m \in [M]$, or equals 0, otherwise. Variable $y_{tlm} \in \{0, 1\}$ denotes an assignment decision that represents whether demand point $l$ is assigned to facility $m$. 
We denote the random weighted distance between $l$ and $m$ by $c_{tlm}(\bom_t) = a_{tl}(\bom_t) d_{lm}$, where $d_{lm} > 0$ is the Euclidean distance between $l$ and $m$, and $a_{tl}(\bom_t)$ is an uncertain demand at point $l \in [L]$. The formulation of MS-FLIP is given by \eqref{eq:generic_msp_main} and \eqref{eq:generic_msp_bellman} with
\begin{subequations} \label{eq:mflip}
\begin{align}
    Q_t(x_{t-1}, \bom_t) := \max &\ \sum_{l \in [L], m \in [M]} c_{tlm}(\bom_t) y_{tlm} + \mathbb{E}_{P_{t+1}}\big[Q_{t+1}(x_t, \bom_{t+1})\big] \label{eq:mflip_a}\\
    \text{s.t.} 
    &\ \sum_{m \in [M]} y_{tlm} = 1, \quad \forall l \in [L], \label{eq:mflip_b}\\
    &\ x_t \geq x_{t-1}, \label{eq:mflip_c}\\
    &\ \sum_{m \in [M]} (x_{tm} - x_{t-1,m}) = r_t, \label{eq:mflip_d}\\
    &\ \sum_{n \in S_{lm}} y_{tln} \leq x_{tm}, \quad \forall l \in [L], m \in [M], \label{eq:mflip_e} \\
    &\ x_t \in \{0, 1\}^M, \ y_t \in \{0, 1\}^{L \times M}.
\end{align}
\end{subequations}
Here $S_{lm} := \{n \in [M]: d_{ln} > d_{lm}\}$ is the set of facilities that are farther than facility $m \in [M]$ is from demand point $l \in [L]$. The first term of the objective function~\eqref{eq:mflip_a} represents the total weighted distance of the assignment of demand points to non-interdicted facilities. Constraints~\eqref{eq:mflip_b} enforce each demand point to be assigned to a facility. Constraint~\eqref{eq:mflip_c} ensures that the facilities interdicted from the previous stages remain interdicted for the current stage. Constraint~\eqref{eq:mflip_d} ensures that the total number of interdictions occurred at the current stage equals to the budget $r_t$. 
Constraint~\eqref{eq:mflip_e}, for each $l \in [L]$ and $m \in [M]$, prevents the demand point $l$ from being assigned to facilities farther than the facility $m$, unless the facility $m$ is interdicted.
It should be noted that problem~\eqref{eq:mflip} is a single-level maximization problem, \blue{but not the max-min form of typical interdiction problems,} because of the assumption that each facility has enough capacity to cover all demand values and the demand point is always assigned to the closest facility through constraints~\eqref{eq:mflip_e}.

\clearpage
\section{Proofs} \label{apx:proofs}
\subsection{Proof of Theorem~\ref{thm:drr_cutting_plane_approx}}
\begin{proof}
For any $k \in [K^l_t]$ and $x_t \in \{0, 1\}^{d_x}$, it is satisfied that
\begin{equation*}
\begin{aligned}
    \min_{P_{t+1} \in \P_{t+1}}
    \sum_{i \in \N_{t+1}} & p^i_{t+1} \hat{Q}^l_{t+1} (x_{t}, \om^i_{t+1})
    \geq \min_{P_{t+1} \in \P_{t+1}} \sum_{i \in \N_{t+1}} p^i_{t+1}\bigg( (\alpha^{i,k}_t)^\top x_t + \beta^{i,k}_t \bigg) \\
    & = \min_{P_{t+1} \in \P_{t+1}} \sum_{i \in \N_{t+1}} p^i_{t+1}\bigg( (\alpha^{i,k}_t)^\top (x_t - \hat{x}_t) + (\alpha^{i,k}_t)^\top \hat{x}_t + \beta^{i,k}_t \bigg) \\
    & \geq \gamma^k_t + \min_{P_{t+1} \in \P_{t+1}} \sum_{i \in \N_{t+1}} p^i_{t+1} (\alpha^{i,k}_t)^\top (x_t - x^k_t) \\
    & \geq \gamma^k_t + \sum_{j \in [d_x]} \min_{P_{t+1} \in \P_{t+1}} \sum_{i \in \N_{t+1}} p^i_{t+1} \alpha^{i,k}_{t,j} (x_{t,j} - x^k_{t,j}).
\end{aligned}
\end{equation*}
In the last inequality, if $x^k_{t,j} = 0$, then $(x_{t, j} - x^k_{t,j}) \in \{0, 1\}$. It follows that we can fix the coefficient of the term $(x_{t, j} - x^k_{t,j})$ to 
$$
    \pi^k_{t,j} = \min_{P_{t+1} \in \P_{t+1}} \sum_{i \in \N_{t+1}} p^i_{t+1} \alpha^{i,k}_{t,j}.
$$
If $x^k_{t,j} = 1$, then $(x_{t, j} - x^k_{t,j}) \in \{0, -1\}$ and we can fix the coefficient to
$$
    \pi^k_{t,j} = \max_{P_{t+1} \in \P_{t+1}} \sum_{i \in \N_{t+1}} p^i_{t+1} \alpha^{i,k}_{t,j}.
$$
Fixing the coefficients for all $j \in d_x$ and $k \in [K^l_t]$, we have
\begin{equation} \label{eq:drr_decomp_inequality}
    \min_{P_{t+1} \in \P_{t+1}} \sum_{i \in \N_{t+1}} p^i_{t+1} \hat{Q}^l_{t+1}(x_t, \om^i_{t+1})
        \geq (\pi^k_t)^\top (x_t - x^k_t) + \gamma^k_t, \quad \forall k \in [K^l_t].
\end{equation}
Function $\phi^{l, C}_t(x_t)$ is constructed using the affine functions in the right-hand side of \eqref{eq:drr_decomp_inequality}, and thus it follows that $\phi^{l,C}_t(x_t) \leq \Q^{RR}_{t+1}(x_t)$.
\qed \end{proof}

\subsection{Proof of Proposition~\ref{prop:drr_continuous_strong_dual}}
\begin{proof}
Function $\Q^{RR}_t (x_{t-1}) = -\max_{P_{t} \in \P_{t}(x_{t-1})} \E_{P_t}[- Q^{RR}_t(x_{t-1}, \bom_t)].$
By applying Theorem~1 in \cite{Gao} to the maximization problem, the dual formulation is given by 
\begin{align*}
    \Q^{RR}_t (x_{t-1}) 
        = - \min_{\rho_t \geq 0, v_t^i} 
        \ &
        \epsilon_t(x_{t-1}) \rho_t + \sum_{i \in \N_t} \bar{p}^i_t v^i_t \\
        \text{s.t.} \ & \rho_t \norm{\om_t - \om^i_t} + v^i_t \geq - Q^{RR}_t(x_{t-1}, \om_t), \ \forall \om_t \in \Om_t, i \in \N_t.
\end{align*}
where $\bar{p}^i_t = 1/N_t$. Consequently, we obtain
\begin{align*}
    &\Q^{RR}_t (x_{t-1}) \\
    &= - \min_{\rho_t \geq 0} \bigg\{ \epsilon_t(x_{t-1}) \rho_t + \sum_{i \in \N_t} \bar{p}^i_t \max_{\om_t \in \Om_t} \Big\{ - \rho_t \norm{\om_t - \om^i_t} - Q^{RR}_t(x_{t-1}, \om_t) \Big\} \bigg\} \\
    &= \max_{\rho_t \geq 0} \bigg\{ -\epsilon_t(x_{t-1}) \rho_t + \sum_{i \in \N_t} \bar{p}^i_t \min_{\om_t \in \Om_t} \Big\{\rho_t \norm{\om_t - \om^i_t} + Q^{RR}_t(x_{t-1}, \om_t) \Big\} \bigg\}. \ \ \qed
\end{align*}
\end{proof}

\subsection{Proof of Theorem~\ref{thm:drr_continuous_cut}}
\begin{proof}
By Proposition~\ref{prop:drr_continuous_strong_dual}, we have
\begin{equation} \label{eq:Q_RR_drr_continuous}
    \Q^{RR}_t(x_{t-1}) = \max_{\rho_t \geq 0} \bigg\{ -\epsilon_t(x_{t-1}) \rho_t + \sum_{i \in \N_t} \bar{p}^i_t V^i_t(x_{t-1}, \rho_t) \Big\} \bigg\},
\end{equation}
where $\bar{p}^i_t = 1/N_t$ and
\begin{equation} \label{eq:V_i_t}
    V^i_t(x_{t-1}, \rho_t) := \min_{\om_t \in \Om_t} \Big\{ \rho_t \norm{\om_t - \om_t^i} + Q^{RR}_t(x_{t-1}, \om_t) \Big\} \ \text{for } i \in \N_t.
\end{equation}
Using the valid cuts with coefficients $\{(\pi^k_t, \gamma^k_t)\}_{k \in [K_t]}$ for $Q^{RR}_t(x_t)$, we obtain a lower-bounding approximation $\underline{V}^i_t$ of $V^i_t$ under assumptions \ref{amp:feasible_region} and \ref{amp:linear_objective_function}, where
\begin{subequations} \label{eq:V_bar_i_t}
\begin{align}
    \underline{V}^i_t(x_{t-1}, \rho_t) := \min_{\om_t, x_t} \ & \rho_t \norm{\om_t - \om_t^i} + c_t^\top x_t +  \theta_t \\
    \text{s.t.} \ 
    & A_t x_t - \om_t \geq - C_t x_{t-1} \label{eq:V_bar_i_t-b} \\
    & \om_t \in \Om_t = [l_t, u_t]^{d_\om} \label{eq:V_bar_i_t-c} \\
    & \theta_t \geq (\pi_t^k)^\top x_t + \gamma_t^k, \quad \forall k \in [K_t] \label{eq:V_bar_i_t-d} \\
    & x_t \in \X_t, \om_t \in \R^{d_\om}, \theta_t \in \R.
\end{align}
\end{subequations}
Then, a Lagrangian relaxation of \eqref{eq:V_bar_i_t} without integrality restrictions on $x_t$, where $\lambda^i_t, (\mu^i_t, \nu^i_t), \zeta^i_{tk}$ are dual multipliers associated with constraints \eqref{eq:V_bar_i_t-b}, \eqref{eq:V_bar_i_t-c}, \eqref{eq:V_bar_i_t-d}, respectively, is given by
\begin{equation} \label{eq:drr_continuous_v_lag}
\begin{split}
\min_{(x_t,\om_t,\theta_t) \in \R^{d_x + d_\om + 1}}
    & - (\lambda^i_t)^\top C_t x_{t-1} + (\mu^i_t)^\top l_t - (\nu^i_t)^\top u_t + \rho_t \norm{\om_t - \om_t^i} \\& + (\lambda^i_t - \mu^i_t + \nu^i_t)^\top \om_t + \bigg(c_t - \lambda^i_t A_t + \sum_{k \in [K_t]} \pi^k_t\bigg)^\top x_t \\& + \bigg(1 - \sum_{k \in [K_t]} \zeta^i_{tk}\bigg) \theta_t + \sum_{k \in [K_t]} \gamma^k_t \zeta^i_{tk}.
\end{split}
\end{equation}
Next, we prove that the Lagrangian dual is equivalent to
\begin{subequations} \label{eq:drr_continuous_v_dual}
\begin{align}
    \max \ &
    \begin{aligned}[t]
        - (\lambda^i_t)^\top C_t x_{t-1} + &(\mu^i_t)^\top l_t - (\nu^i_t)^\top u_t \\
        &+ (\lambda^i_t - \mu^i_t + \nu^i_t)^\top \om^i_t + \sum_{k \in [K_t]} \gamma^k_t \zeta_{tk}
    \end{aligned}\\
    \text{s.t.} \ 
    & A_t^\top \lambda^i_t  - \sum_{k \in [K_t]} \pi^k_t \zeta_{tk} = c_t \label{eq:drr_continuous_v_dual_b}\\
    & \norm{\lambda^i_t - \mu^i_t + \nu^i_t}_\ast \leq \rho_t \label{eq:drr_continuous_v_dual_c}\\
    & \sum_{k \in [K_t]} \zeta^i_{tk} = 1 \label{eq:drr_continuous_v_dual_d}\\
    & (\lambda^i_t, \mu^i_t, \nu^i_t, \zeta^i_t) \geq 0.
\end{align}
\end{subequations}
As a consequence, we can substitute $V^i_t$ for all scenarios $i \in \mathcal{N}_t$ in \eqref{eq:Q_RR_drr_continuous} with the value functions of these Lagrangian duals and obtain problem \eqref{eq:drr_continuous_dual_approx}. Since the Lagrangian duals are maximization problems, any feasible solutions of them yield a valid cut in the form of \eqref{eq:drr_continuous_cut}, and tightest cuts are attained by optimal solutions.

Constraints~\eqref{eq:drr_continuous_v_dual_b} and \eqref{eq:drr_continuous_v_dual_d} are straightforwardly derived to ensure $(c_t - \lambda A_t + \sum_{k \in [K_t]}\pi^k_t) = 0$, and $(1-\sum_{k \in [K_t]} \zeta_k) = 0$, thereby ensuring that \eqref{eq:drr_continuous_v_lag} is not unbounded because of unrestricted $x_t$ and $\theta_t$. Likewise, as shown below,
constraint~\eqref{eq:drr_continuous_v_dual_c} ensures that $$
\tau := \inf_{\om_t \in \R^{d_\om}}\{\rho_t \norm{\om_t - \om_t^i} + (\lambda^i_t - \mu^i_t + \nu^i_t)^\top \om_t\} > -\infty.
$$ 
For the ease of exposition, we use $(\lambda,\mu,\nu)$ instead of $(\lambda^i_t,\mu^i_t,\nu^i_t)$ in the following claim.
\begin{claim}
    Let $\norm{\cdot}_\ast$ be the dual norm of $\norm{\cdot}$.
    Given $\rho_t \geq 0$, we have $\tau > -\infty$ if and only if $(\rho_t - \norm{\lambda - \mu + \nu}_\ast) \geq 0$. Also, if $(\rho_t - \norm{\lambda - \mu + \nu}_\ast) \geq 0$, then $\tau = (\lambda - \mu + \nu)^\top \om^i_t$.
\end{claim}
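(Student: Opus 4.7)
The plan is to reduce the infimum to a standard calculation involving the dual norm by a linear change of variables, and then split into the two cases determined by whether $\rho_t$ dominates the dual norm $\|\lambda-\mu+\nu\|_\ast$ or not. Setting $y := \om_t - \om_t^i$ and $z := \lambda - \mu + \nu$, we may rewrite
\begin{equation*}
    \tau = z^\top \om_t^i + \inf_{y \in \R^{d_\om}} \Big\{\rho_t \|y\| + z^\top y \Big\}.
\end{equation*}
Thus the problem is completely captured by understanding $g(z,\rho_t) := \inf_{y} \{\rho_t \|y\| + z^\top y\}$, which is the infimal convolution/shift quantity appearing in standard Fenchel duality for norms.

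The first step is to establish the easy direction: if $\rho_t \geq \|z\|_\ast$, then by the defining inequality of the dual norm $z^\top y \geq -\|z\|_\ast \|y\|$, we immediately get $\rho_t \|y\| + z^\top y \geq (\rho_t - \|z\|_\ast) \|y\| \geq 0$ for every $y$, while the value $0$ is attained at $y = 0$. Hence $g(z,\rho_t)=0$ and therefore $\tau = z^\top \om_t^i = (\lambda-\mu+\nu)^\top \om_t^i$. This settles both the ``if'' part of the equivalence and the formula for $\tau$.

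The second step is the converse: if $\rho_t < \|z\|_\ast$, then $\tau = -\infty$. The key will be to exhibit a direction along which the objective decreases to $-\infty$. By the definition of dual norm $\|z\|_\ast = \sup_{\|y\|\le 1} z^\top y$ (with the supremum attained on the compact unit ball, at some $\bar y$ with $\|\bar y\|=1$ and $z^\top \bar y = -\|z\|_\ast$ after a sign flip), we can choose $y_k := -k\bar y$ for $k > 0$, which yields
\begin{equation*}
    \rho_t \|y_k\| + z^\top y_k = k\bigl(\rho_t - \|z\|_\ast\bigr) \longrightarrow -\infty \quad \text{as } k \to \infty,
\end{equation*}
showing $g(z,\rho_t) = -\infty$. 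The only subtle point is that attainment of the dual-norm supremum is needed; for general norms on $\R^{d_\om}$ this follows from compactness of the unit ball. I expect this attainment/compactness observation to be the main technical obstacle, and it is the only place where finite-dimensionality of $\R^{d_\om}$ is used. Combining the two steps gives the claimed equivalence and formula.
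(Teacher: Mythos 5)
Your proposal is correct in substance and follows essentially the same route as the paper: the linear change of variables isolates $z^\top \om_t^i$ plus the infimum $g(z,\rho_t)$, the case $\rho_t \ge \|z\|_\ast$ is settled by the dual-norm (H\"older) inequality with attainment at $y=0$, and the case $\rho_t < \|z\|_\ast$ is settled by exhibiting a ray along which the objective tends to $-\infty$. Two remarks. First, your unboundedness computation has a double sign flip: with $\bar y$ normalized so that $z^\top \bar y = -\|z\|_\ast$, the choice $y_k := -k\bar y$ gives $\rho_t\|y_k\| + z^\top y_k = k\bigl(\rho_t + \|z\|_\ast\bigr) \to +\infty$, not the displayed $k\bigl(\rho_t - \|z\|_\ast\bigr)$; you want $y_k := k\bar y$ (or, equivalently, keep $y_k := -k\bar y$ but take $\bar y$ to be the maximizer with $z^\top \bar y = +\|z\|_\ast$). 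This is a one-character fix, not a flaw in the approach. Second, the attainment of the dual-norm supremum, which you single out as the main technical obstacle, is not actually needed: since $\|z\|_\ast = \sup_{\|y\|\le 1}(-z)^\top y > \rho_t$, the definition of supremum already yields some $\bar y$ with $\|\bar y\|\le 1$ and $z^\top \bar y < -\rho_t$, hence $\rho_t\|\bar y\| + z^\top \bar y < 0$, and scaling along $r\bar y$ with $r\to\infty$ drives the objective to $-\infty$. This is how the paper argues, and it sidesteps the compactness/attainment issue entirely.
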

\begin{proof}
    By letting $\om'_t = \om^i_t - \om_t$, the infimum $\tau$ can be rewritten as follows.
    \begin{subequations}
    \begin{align}
        \tau 
        &= (\lambda - \mu + \nu)^\top \om^i_t + \inf_{\om'_t \in \R^{d_\om}} \bigg\{ \rho_t \norm{\om'_t} - (\lambda - \mu + \nu)^\top \om'_t\bigg\}.
    \end{align}
    \end{subequations}
    Let $\tau^0 := \inf_{\om'_t \in \R^{d_\om}} \big\{ \rho_t \norm{\om'_t} - (\lambda - \mu + \nu)^\top \om'_t \big\}.$
    If $(\rho_t - \norm{\lambda - \mu + \nu}_\ast) \geq 0$, then we have
    $$\tau^0 \geq \inf_{\om'_t \in \R^{d_\om}} \Big\{ \norm{\om'_t} \cdot (\rho_t - \norm{\lambda - \mu + \nu}_\ast) \Big\} = 0.$$ The inequality holds by H{\"o}lder's inequality. The infimum $\tau^0 = 0$ since $\om'_t = 0$ is feasible. Therefore, $\tau = (\lambda - \mu + \nu)^\top \om^i_t$.

    Now we show the ``only-if'' direction using proof by contradiction. Suppose that 
    $\tau > -\infty$ and $(\rho_t - \norm{\lambda - \mu + \nu}_\ast) < 0$. This implies that $\tau^0$ is also bounded. By definition of dual norm, i.e., $\norm{\lambda - \mu + \nu}_\ast = \max_{\om_t \in \R^{d_\om}}\{ \om_t^\top (\lambda - \mu + \nu) : \norm{\om_t} \leq 1 \},$ there exists $\bar{\om}_t\in\R^{d_\om}$ such that 
    $$ 
    \rho_t < \bar{\om}_t^\top (\lambda - \mu + \nu) \text{ and } \norm{\bar{\om}_t} \leq 1.
    $$
    Thus, $\rho_t \norm{\bar{\om}_t} - \bar{\om}_t^\top(\lambda - \mu + \nu) < 0$. Considering scalar $r \to \infty$, we have $\rho_t \norm{r\bar{\om}_t} - (r\bar{\om}_t)^\top(\lambda - \mu + \nu) \to -\infty$. By taking $\om'_t = r \bar{\om}_t$, it follows that $\tau^0 = - \infty$, and $\tau = -\infty$.
\qed\end{proof}
\end{proof}

\subsection{Proof of Theorem~\ref{thm:finite_convergence_drr_cutting_plane}}
\begin{proof}
    
We first show that the finiteness of the while loop.
Let $\{\bar{x}^l_t(\xi_{[t]}),\bar{y}^l_t(\xi_{[t]}):t \in [T]\}$ be a policy that is defined by the forward step at iteration $l$ of the DRR-SDDP-C algorithm. Then, to achieve the optimality of the policy, it suffices to show that $\phi^{l, C}_t(\bar{x}^l_t(\xi_{[t]})) = \Q^{RR}_{t+1}(\bar{x}^l_t(\xi_{[t]}))$ for $t \in [T-1]$ and all $\xi \in \Xi$.

Let $\L$ denote the set of iterations where the policy is non-optimal. 
For each $t \in [T-1],$ let $l_t \in \L$ be the largest iteration such that
$\phi^{l, C}_t(\bar{x}^l_t(\xi_{[t]})) < \Q^{RR}_{t+1}(\bar{x}^l_t(\xi_{[t]}))$
for some $\xi \in \Xi$.
Also, given iteration $m$, let $\bar{X}^m_{t}, t \in [T-1],$ be the set of $\bar{x}^l_t(\xi_{[t]})$ for all future iterations $l \in \L\setminus [m]$ and $\xi \in \Xi$, i.e., $\bar{X}^m_{t} = \{\bar{x}^l_t(\xi_{[t]}):\xi \in \Xi, l > m, l \in \L\}$.

We first show that $l_{T-1}$ is finite with probability one.
At the forward step of iteration $l$, if we observe that $\phi^{l, C}_{T-1}(\bar{x}) < \Q^{RR}_T(\bar{x})$ for any $\bar{x} \in \bar{X}^1_{T-1}$, then for all future iterations $m > l$ we have
\begin{equation}
    \phi^{m, C}_{T-1}(\bar{x}) \geq \min_{P_{T} \in \P_T} \sum_{i \in \N_T} p^i_T \hat{Q}^l_T(\bar{x}, \om^i_T) = \Q^{RR}_T(\bar{x}).
\end{equation}
The above holds since the cut added at iteration $l$ (in the form of constraint \eqref{eq:drr_decomp}) satisfies both \eqref{eq:cut_valid_plane} and \eqref{eq:cut_supporting_plane}, and by definition $\hat{Q}^l_T$ is equivalent to $Q^{RR}_T$.
It follows that $\phi^{m,C}_{T-1}(\bar{x}) = \Q^{RR}_T(\bar{x})$ for any $m > l$.
Since $|\bar{X}^1_{T-1}| < \infty$ and every $\xi \in \Xi$ has a positive probability to be sampled, it holds with probability one that $\phi^{l,C}_{T-1}(\bar{x}) = \Q^{RR}_T(\bar{x})$ for all $\bar{x} \in \bar{X}^1_{T-1}$ after finitely many iterations.
This shows that $l_{T-1} < \infty$ with probability one.

Next, we show that $l_{T-2}$ is also finite with probability one. Suppose at iteration $l \geq l_{T-1}$ we observe that $\phi^{l, C}_{T-2}(\bar{x}) < \Q^{RR}_{T-1}(\bar{x})$ for any $\bar{x} \in \bar{X}^{l_{T-1}}_{T-2}$. Then, we have $\phi^{m,C}_{T-2}(\bar{x}) = \Q^{RR}_{T-1}(\bar{x})$ for all future iterations $m > l$ since
\begin{equation}\begin{aligned}
    \phi^{m,C}_{T-2}(\bar{x}) 
        &\geq \min_{P_{T-1} \in \P_{T-1}} \sum_{i \in \N_{T-1}} p^i_{T-1} \hat{Q}^l_{T-1}(\bar{x}, \om^i_{T-1}) \\
        &= \min_{P_{T-1} \in \P_{T-1}} \sum_{i \in \N_{T-1}} p^i_{T-1} Q^{RR}_{T-1}(\bar{x}, \om^i_{T-1}) = \Q^{RR}_{T-1}(\bar{x}).
\end{aligned}\end{equation}
The first equality holds since $\phi^{m,C}_{T-1}(\bar{x}) = \Q^{RR}_T(\bar{x})$ for $m > l_{T-1}$. So, we have $\phi^{m, C}_{T-2}(\bar{x}) = \Q^{RR}_{T-1}(\bar{x})$ for any future iteration $m > l$. Since $|\bar{X}^{l_{T-1}}_{T-2}| < \infty$ and every $\xi \in \Xi$ has a positive probability, there exists a finite iteration $l$ such that $\phi^{l, C}_{T-2}(\bar{x}) = \Q^{RR}_{T-1}(\bar{x})$ for all $\bar{x} \in \bar{X}^{l_{T-1}}_{T-2}$ with probability one, which implies $l_{T-2} < \infty$ with probability one. Similarly, we can prove by induction that $l_t,$ for all $t \in [T-1],$ are finite. This proves that $|\L| < \infty$ with probability one.

Now, we show that each iteration terminates in finite time.
This is followed by the facts that
the subproblems are bounded and thus solvable in a finite time using a branch-and-cut algorithm and Line~\ref{line:da_sddp_refine_approx} is executed in a finite time by assumption.

\qed \end{proof}

\subsection{Proof of Theorem~\ref{thm:finite_convergence_drr_reform}}
\begin{proof}
By following a similar argument to the proof of Theorem~\ref{thm:finite_convergence_drr_cutting_plane}, we can show that the number of the while-loop iterations required to define an optimal policy is finite with probability one and each while-loop iteration is executed in a finite time.
\qed \end{proof}

\subsection{Proof of Theorem~\ref{thm:finite_convergence_dra}}
\begin{proof}
Let $\{\bar{x}^l_t(\xi_{[t]}),\bar{y}^l_t(\xi_{[t]})\}_{t \in [T]}$ be a policy that is defined by the forward step at iteration $l$. To show its optimality, it suffices to show $\phi^{l, S}_t(\bar{x}^l_t(\xi_{[t]})) = \Q^{RA}_{t+1}(\bar{x}^l_t(\xi_{[t]}))$ for $t \in [T-1]$ and all $\xi \in \Xi$.
We can prove the statement by following a similar argument to the proof of Theorem~\ref{thm:finite_convergence_drr_cutting_plane}.
\qed \end{proof}

\subsection{Proof of Proposition~\ref{prop:convex_hull_D}}
\begin{proof}
Let $z_t = (x_t, y_t, \phi_t)$. For any $\bar{x}_{t-1} \in \{0,1\}^{d_x}$, stage $t$, and iteration $l$, the convex hull of $\D^l_t(\bar{x}_{t-1}, \om_t)$ is equivalent to the convex hull of $\F^l_t(\om_t) \cap \Eset(\bar{x}_{t-1})$ where
\begin{multline} \label{eq:F_set}
    \F^l_t(\om_t) := \bigg\{
    (x_{t-1}, z_t) \in \R_+^{d_x} \times \R_+^{d_x + d_y + 1}: \\
    \bigvee_{h \in H_t} \Big( \phi_t - (\pi^{k}_t)^\top x_t \geq \gamma^{k}_t, \ k \in [K^l_t], \\
    A^h_t(\om_t) x_t + B^h_t(\om_t) y_t + C^h_t(\om_t) x_{t-1} \geq b^h_t(\om_t) \Big)
    \bigg\},
\end{multline}
and $\Eset(\bar{x}_{t-1}):=\{(x_{t-1}, z_t):x_{t-1} = \bar{x}_{t-1}\}$.

We claim that $conv(\F^l_t(\om_t) \cap \Eset(\bar{x}_{t-1})) = conv(\F^l_t(\om_t)) \cap \Eset(\bar{x}_{t-1})$. 
Clearly, $conv(\F^l_t(\om_t) \cap \Eset(\bar{x}_{t-1})) \subseteq conv(\F^l_t(\om_t)) \cap \Eset(\bar{x}_{t-1})$.

To show that $conv(\F^l_t(\om_t) \cap \Eset(\bar{x}_{t-1})) \supseteq conv(\F^l_t(\om_t)) \cap \Eset(\bar{x}_{t-1})$, pick any point $(\bar{x}_{t-1}, \bar{z}_t) \in conv(\F^l_t(\om_t)) \cap \Eset(\bar{x}_{t-1})$. Then, there exist $(x_{t-1}^j, z_t^j) \in \F^l_t(\om_t)$ and $\lambda^j \in (0, 1], j = 1, \dots, J$ such that $\sum_{j \in [J]}\lambda^j = 1, \sum_{j \in [J]}\lambda^j x^j_{t-1} = \bar{x}_{t-1}, \sum_{j \in [J]}\lambda^j z^j_t = \bar{z}_t$. Since $\bar{x}_{t-1}$ is binary and $x_{t-1}^j$ belongs to $[0,1]^{d_x}$, this implies that $x^j_{t-1} = \bar{x}_{t-1}$ for all $j \in [J]$. Consequently, $(x^j_{t-1}, z^j_{t-1}) \in \F^l_t(\om_t) \cap \Eset(\bar{x}_{t-1}), j \in [J],$ and $(\bar{x}_{t-1}, \bar{z}_t) \in conv(\F^l_t(\om_t) \cap \Eset(\bar{x}_{t-1}))$. This completes the proof of the claim.

To obtain $conv(\F^l_t(\om_t))$, we first use Theorem~2.1 in \cite{BalasDPBook} and derive a tight extended formulation of $\F^l_t(\om_t)$, which is given by
\begin{equation} \label{eq:F_tight_form}
\begin{aligned}
\widehat{\F}^{l}_t(\om_t) := \Bigg\{
    & \sum_{h\in H_t} \zeta^h_{t,0} = 1,
    \sum_{h\in H_t} \zeta^h_{t,1} - x_{t} = 0, \sum_{h\in H_t} \zeta^h_{t,2} - y_t = 0,
        \\
    & \sum_{h\in H_t} \zeta^h_{t,3} - x_{t-1} = 0, \sum_{h\in H_t} \zeta^h_{t,4} - \phi_t = 0, \\
    & A^h_t(\om_t) \zeta^h_{t,1} + B^h_t(\om_t) \zeta^h_{t,2} + C^h_t(\om_t) \zeta^h_{t,3} - b^h_t(\om_t) \zeta^h_{t, 0} \geq 0, \quad h \in H_t, \\
    & \zeta^h_{t,4} - (\pi^k_t)^\top \zeta^h_{t,1} - \gamma^k_t \zeta^h_{t, 0} \geq 0, \quad h \in H_t, k \in K^l_t, \\
    & x_t \in \R^{d_x}_+, y_t \in \R^{d_y}_+, x_{t-1} \in \R_+^{d_x}, \phi_t \in \R_+, \\
    & \zeta^h_{t,0} \in \R_+, \zeta^h_{t,1} \in \R^{d_x}_+, \zeta^h_{t,2} \in \R^{d_y}_+, \zeta^h_{t, 3} \in \R^{d_x}_+, \zeta^h_{t, 4} \in \R_+, h \in H_t
    \Bigg\}.
\end{aligned}
\end{equation}
Since $\F^l_t(\om_t)$ is unbounded, the projection of the above formulation~\eqref{eq:F_tight_form} onto the $(x_{t-1}, z_t)$-space is the closed convex hull of $\F^l_t(\om_t)$. Consider $|H_t|$ polyhedra defined by disjunctive constraints for $h \in H_t$ in \eqref{eq:F_set}. They are nonempty and have identical recession cones. This implies that $conv(\F^l_t(\om_t))$ is a polyhedron, and thus the closed convex hull of $\F^l_t(\om_t)$ is equivalent to $conv(\F^l_t(\om_t))$. Hence, using the tight extended formulation~\eqref{eq:F_tight_form}, the convex hull of $\D^l_t(\bar{x}_{t-1}, \om_t)$ is given by $Proj_{x_{t-1}, z_t}(\widehat{\F}^l_t(\om_t)) \cap \Eset(\bar{x}_{t-1}),$ which is equivalent to the projection of the set $\tilde{\D}^{l}_t(\bar{x}_{t-1}, \om_t)$ onto the $z_t$-space.
\qed \end{proof}

\subsection{Proof of Proposition~\ref{prop:hierarchy_relax}}
\begin{proof}
For $s \in [d_x]$, the set $\D_t^{l,s}(x_{t-1}, \om_t), t \in [T],$ is given by the following disjunctive constraints:
\begin{equation}
\begin{aligned}
    \bigvee_{(J_1, J_2) \in \J^s_t}
    \bigg(
        x_{t,j} = 0,\ j \in J_1, \ x_{t,j} = 1, \ j \in J_2,\ 
        \phi_t - (\pi^k_t)^\top x_t \geq \gamma^k_t, \ k \in [K^l_t], \\
        A_t(\om_t) x_t + B_t(\om_t) y_t \geq b_t(\om_t) - C_t(\om_t) x_{t-1}
    \bigg),
\end{aligned}
\end{equation}
where $\J^s_t = \{(J_1, J_2): J_1, J_2 \subseteq [d_x], J_1 \cap J_2 = \emptyset, |J_1 \cup J_2| = s\}$. By applying Proposition~\ref{prop:convex_hull_D} to this disjunctive set, we obtain the tight extended formulation \eqref{eq:tight_form_hierarchy_relaxation}.
\qed \end{proof}

\section{Pseudocode of DA-SDDP-DP Algorithm} \label{apx:da_sddp_dp_algorithm}

\setcounter{ALC@unique}{0}
\begin{algorithm}
\caption{DA-SDDP-DP} \label{alg:da_sddp_dp}
\begin{algorithmic}[1]
    \STATE{\textbf{Initialize} 
    $l \gets 1$; $x_0 \gets$ initial state; $\om_1 \gets$ data at the first stage; $\Om_1:=\{\om_1\}$; $K^l_t \gets 0$ for $t = 1, \dots, T-1$;
    } \\
    \WHILE{(satisfying none of stopping conditions)} \label{line:da_sddp_dp_while} 
    \STATE{Sample a scenario path $\xi^l \in \Xi:=\Om_1 \times \dots \times \Om_T$} \label{line:da_sddp_dp_sample}
    \FOR[Forward Step]{$t \in [T]$} \label{line:da_sddp_dp_forward_for_t}
        \STATE{Solve $t$-stage LP-subproblem~\eqref{eq:da_msdp_sub} given $x_{t-1}=x^l_{t-1}$ and $\om_t = \xi^l_t$} \label{line:da_sddp_dp_solve_subproblem}
    \ENDFOR
    \FOR[Backward Step]{$t=T,\dots,2$\label{line:da_sddp_dp_backward_for_t}}
        \FOR{$i \in \N_t$ \label{line:da_sddp_dp_backward_for_i}}
            \STATE{Solve $t$-stage LP-subproblem~\eqref{eq:da_msdp_sub} given $x_{t-1}=x^l_{t-1}$ and $\om_t = \om^i_t$
            and obtain cut $(\sigma_{t-1,1}^{i,l}, \sigma_{t-1,0}^{i,l})$} \label{line:da_sddp_dp_solve_relaxation}
        \ENDFOR
        \STATE{Add cuts $(\pi^{l}_{t-1}, \gamma^{l}_{t-1})$ to $(t-1)$-stage LP-subproblem~\eqref{eq:da_msdp_sub} by using cuts $(\sigma_{t-1,1}^{i,l}, \sigma_{t-1,0}^{i,l}), i \in \N_t$} \label{line:da_sddp_dp_tighten_approx}
        \STATE{$K^l_{t-1} \gets K^l_{t-1} + 1$} \label{line:da_sddp_dp_add_K}
    \ENDFOR
    \STATE{Solve LP-subproblem~\eqref{eq:da_msdp_sub} for $t=1$ to obtain the bound $LB$} \label{line:da_sddp_dp_compute_bound}
    \STATE{$K^{l+1}_t \gets K^l_t$ for $t =1, \dots, T-1$}
    \STATE{$l \gets l + 1$}
    \ENDWHILE
    \RETURN LP-subproblems, $LB$ \label{line:da_sddp_dp_return}
\end{algorithmic}
\end{algorithm}

In this algorithm, let $x^l_t$ be an optimal solution obtained by solving the LP-subproblem~\eqref{eq:da_msdp_sub_lp} during a forward step (Line~\ref{line:da_sddp_dp_solve_subproblem}) for iteration $l$ and stage $t$.
In a backward step (Line~\ref{line:da_sddp_dp_solve_relaxation}), the algorithm solves the LP-subproblem~\eqref{eq:da_msdp_sub_lp} given $x^l_t$ and $\om^i_t$ to obtain a cut in the form of Benders cut, $(\sigma^{i,l}_{t-1,1}, \sigma^{i,l}_{t-1,0})$, where $\sigma^{i,l}_{t-1,1}$ and $\sigma^{i,l}_{t-1,0}$ are the optimal dual multipliers, associated with the constraints $\sum_{h\in H_t} \zeta^h_{t,0}=1$ and $\sum_{h \in H_t} \zeta^h_{t,3} = x^l_{t-1}$, respectively.
These cuts $(\sigma^{i,l}_{t-1,1}, \sigma^{i,l}_{t-1,0})$ for $i \in \N_t$ are used to derive a cut $(\pi^{l}_{t-1}, \gamma^l_{t-1})$; the cut $(\pi^{l}_{t-1}, \gamma^l_{t-1})$ takes the form of \eqref{eq:drr_decomp}, if we solve a DRR-MSDP, and it takes the form of \eqref{eq:dra_sep_approx}, if we solve a DRO-MSDP. Then, its copies for $h \in H_t$ are added to the $(t-1)$-stage LP-subproblem (Line~\ref{line:da_sddp_dp_tighten_approx}). In Line~\ref{line:da_sddp_dp_compute_bound}, it computes the lower bound by solving the first-stage LP-subproblem. DA-SDDP-DP repeats this procedure until a stopping condition is met. Both the DRR- and DRO-SDDP-DP algorithms, defined as Algorithm~\ref{alg:da_sddp_dp} with cut $(\pi^l_{t-1}, \gamma^l_{t-1})$ obtained as above for DRR-MSDPs and DRO-MSDPs, respectively, have the finite convergence that can be proved using Theorems~\ref{thm:finite_convergence_drr_cutting_plane} and \ref{thm:finite_convergence_dra}, respectively.

\begin{remark}
    In the implementation of Algorithm \ref{alg:da_sddp_dp}, we can establish the LP-subproblems once and reuse them in each iteration, without the need for repeated construction, by adding constraints $\zeta^h_{t,4} - (\pi^k_t)^\top \zeta^h_{t,1} - \gamma^k_t \zeta^h_{t,0}$ to the LP-subproblems as needed (in Line~\ref{line:da_sddp_dp_tighten_approx}).
\end{remark}

\end{appendices}

\end{document}